\documentclass[10pt, a4paper, reqno, english]{amsart}

\usepackage{hyperref}
\usepackage{amssymb}
\usepackage[all]{xy}

\newcommand\CC{\mathbb{C}}
\newcommand\PP{\mathbb{P}}
\newcommand\AAA{\mathbb{A}}
\newcommand\QQ{\mathbb{Q}}
\newcommand\GG{\mathbb{G}}
\newcommand\xx{\mathbf{x}}
\newcommand{\ADE}{\mathbf{ADE}}
\newcommand{\Aone}{{\mathbf A}_1}
\newcommand{\Athree}{{\mathbf A}_3}
\newcommand{\Afour}{{\mathbf A}_4}
\newcommand{\Dfour}{{\mathbf D}_4}
\newcommand{\Dfive}{{\mathbf D}_5}
\newcommand{\tS}{{\widetilde S}}
\newcommand{\abs}[1]{\left\|#1\right\|_\infty}
\newcommand{\ee}{\boldsymbol{\eta}}
\newcommand{\e}{\eta}
\newcommand\dd{\,\mathrm{d}}
\newcommand{\ex}[1]{*+<5pt>[o][F]{E_{#1}}}
\newcommand{\li}[1]{*+<3pt>[F]{E_{#1}}}
\newcommand{\classrep}{\mathcal{C}}
\newcommand{\classtuple}{\mathbf{C}}
\newcommand{\OO}{\mathcal{O}}
\newcommand{\eI}{I}
\newcommand{\eII}{\mathbf{I}}
\newcommand{\N}{\mathfrak{N}}
\newcommand{\id}[1]{\mathfrak{#1}}
\newcommand{\aaa}{\id a}
\newcommand{\p}{\id p}
\newcommand{\kc}{\id{k}_\id{c}}
\newcommand{\Ao}{\Pi_1}
\newcommand{\At}{\Pi_2}
\newcommand{\eeA}{\boldsymbol{\eta}_{\boldsymbol{A}}}
\newcommand\rto{\dashrightarrow}

\newtheorem{theorem}{Theorem}
\newtheorem{lemma}[theorem]{Lemma}

\theoremstyle{definition}
\newtheorem*{ack}{Acknowledgements}

\numberwithin{theorem}{section}
\numberwithin{equation}{section}

\begin{document}

\setcounter{tocdepth}{1}

\title{Counting imaginary quadratic points via universal torsors, II}

\author{Ulrich Derenthal}

\address{Mathematisches Institut, Ludwig-Maximilians-Universit\"at M\"unchen,
  Theresienstr. 39, 80333 M\"unchen, Germany}

\email{ulrich.derenthal@mathematik.uni-muenchen.de}

\author{Christopher Frei}

\address{Institut f\"ur Mathematik A, Technische Universit\"at Graz,
  Steyrergasse 30, 8010 Graz, Austria}

\email{frei@math.tugraz.at}

\date{April 11, 2013}

\begin{abstract}
  We prove Manin's conjecture for four singular quartic del Pezzo surfaces
  over imaginary quadratic number fields, using the universal torsor method.
\end{abstract}

\subjclass[2010] {11D45 (14G05, 12A25)}

%
%

\maketitle

\tableofcontents

\section{Introduction}

Let $K$ be a number field, $S$ a del Pezzo surface defined over $K$ with only
$\ADE$-singularities, $U$ the open subset obtained by removing the lines from
$S$, and $H$ a height function on $S$ coming from an anticanonical
embedding. If $S(K)$ is Zariski dense in $S$ then generalizations
(e.g.\ \cite{MR1679843}) of Manin's conjecture \cite{MR89m:11060, MR1032922}
predict an asymptotic formula, as $B \to \infty$, for the quantity
\begin{equation*}
  N_{U,H}(B) := |\{\xx \in U(K) \mid H(\xx) \le B\}|,
\end{equation*}
namely
\begin{equation*}
  N_{U,H}(B) = c_{S,H}B(\log B)^{\rho-1}(1+o(1)),
\end{equation*}
where $\rho$ is the rank of the Picard group of a minimal desingularization of
$S$ and $c_{S,H}$ is a positive real number.

Much progress was made in recent years in proving Manin's conjecture for
specific del Pezzo surfaces over $\QQ$ via the \emph{universal torsor
  method}. In \cite{arXiv:1302.6151}, the authors extended this method to
imaginary quadratic fields in case of a quartic del Pezzo surface of type
$\Athree$ with five lines.

In the present article, we continue this investigation by proving Manin's
conjecture over imaginary quadratic fields for quartic del Pezzo
surfaces of types $\Athree+\Aone$, $\Afour$, $\Dfour$, and $\Dfive$.

For more information about Manin's conjecture and the universal torsor method,
we refer to the introductory section of \cite{arXiv:1302.6151} and the
references mentioned there.

\subsection{Results}
Let $K$ be an imaginary quadratic field. We define the anticanonically embedded quartic del
Pezzo surfaces $S_i \subset \PP^4_K$ over $K$ by the following equations:
 \begin{align}
   S_0 &:& x_0x_1-x_2x_3=x_0x_3+x_1x_3+x_2x_4 = 0 &\quad\text{ of type
   }\Athree\text{ ($5$ lines)}\label{eq:def_A3}\\
   S_1 &:& x_0x_3-x_2x_4=x_0x_1+x_1x_3+x_2^2 = 0 &\quad\text{ of type }\Athree+\Aone,\label{eq:def_A3+A1}\\
   S_2 &:& x_0 x_1 - x_2 x_3 = x_0 x_4 + x_1 x_2 + x_3^2 = 0 &\quad\text{ of type }\Afour,\label{eq:def_A4}\\
   S_3 &:& x_0 x_3 - x_1 x_4 = x_0 x_1 + x_1 x_3 + x_2^2 = 0 &\quad\text{ of type }\Dfour,\label{eq:def_D4}\\
   S_4 &:& x_0 x_1 - x_2^2 = x_3^2 + x_0 x_4 + x_1 x_2 = 0 &\quad\text{ of type }\Dfive.\label{eq:def_D5}
 \end{align}
 All of them are split over $K$, hence rational over $K$, and therefore, their
 rational points over $K$ are Zariski dense. The Weil height on $\PP^4_K(K)$ is defined by
 \begin{equation}\label{eq:height}
   H(x_0 : \cdots : x_4) := \frac{\max\{\abs{x_0}, \dots, \abs{x_4}\}}{\N(x_0\OO_K+\dots+x_4\OO_K)}\text,
 \end{equation}
 where $\OO_K$ is the ring of integers in $K$, $\abs{\cdot} := |\cdot|^2$ is
 the square of the usual complex absolute, and $\N\aaa$ is the absolute norm
 of a fractional ideal $\aaa$.

 For $S_0$, Manin's conjecture was proved over $\QQ$ and imaginary quadratic
 fields in \cite{arXiv:1302.6151}. For $S_1$, $S_2$, $S_3$, $S_4$, Manin's
 conjecture was proved over $\QQ$ in \cite{MR2520770}, \cite{MR2543667},
 \cite{MR2290499}, \cite{MR2320172}, respectively. In this article,
 we prove Manin's conjecture for $S_1$, $\dots$, $S_4$ over imaginary
 quadratic fields:

 \begin{theorem}\label{thm:main}
   Let $K$ be an imaginary quadratic field, $\Delta_K$ its discriminant, $h_K$
   its class number, $\omega_K$ the number of units in $\OO_K$. For $i \in
   \{1, \ldots, 4\}$, let $U_i$ be the complement of the lines in the del
   Pezzo surface $S_i \subset \PP^4_K$ defined by
   \eqref{eq:def_A3+A1}--\eqref{eq:def_D5}.  For $B \geq 3$, we have
  \begin{equation*}
    N_{U_i,H}(B) = c_{S_i, H} B(\log B)^5 + O(B(\log B)^4\log \log B),
  \end{equation*}
  with
  \begin{equation*}
    c_{S_i,H} :=\alpha(\tS_i) \cdot \frac{(2\pi)^6 h_K^6}{\Delta_K^4\omega_K^6}\cdot \theta_0 \cdot \omega_{\infty}(\tS_i)\text.
  \end{equation*}
  Here,
  \begin{align*}
    \alpha(\tS_1) &:= \frac{1}{8640}, &\alpha(\tS_2) &:= \frac{1}{21600}, &
    \alpha(\tS_3) &:= \frac{1}{34560}, &\alpha(\tS_4) &:= \frac{1}{345600},
  \end{align*}
  \begin{equation}\label{eq:def_theta_0}
    \theta_0 := \prod_\p \left(1-\frac{1}{\N\p}\right)^6\left(1+\frac{6}{\N\p}+\frac{1}{\N\p^2}\right),
  \end{equation}
  and
  \begin{align*}
    \omega_\infty(\tS_1) &:= \frac{12}{\pi}\int_{\abs{z_0z_1(z_0+z_2)},
      \abs{z_1^3}, \abs{z_1^2(z_0+z_2)}, \abs{z_1z_2(z_0+z_2)}, \abs{z_0z_2(z_0+z_2)}\leq 1}\hspace{-2cm}\dd z_0 \dd z_1 \dd z_2,\\
    \omega_\infty(\tS_2) &:= \frac{12}{\pi}\int_{\abs{z_0^3}, \abs{z_0z_2z_3}, \abs{z_0^2z_2}, \abs{z_0^2z_3}, \abs{z_3(z_2^2 + z_0z_3)}\leq 1}\dd z_0 \dd z_1 \dd z_2,\\
    \omega_\infty(\tS_3) &:= \frac{12}{\pi}\int_{\abs{z_0 z_1^2}, \abs{z_1^3}, \abs{z_1^2z_2}, \abs{z_1(z_0 z_1 + z_2^2)}, \abs{z_0(z_0 z_1 + z_2^2)}\leq 1}\dd z_0 \dd z_1 \dd z_2 ,\\
    \omega_\infty(\tS_4) &:= \frac{12}{\pi}\int_{\abs{z_0^3}, \abs{z_0z_1^2}, \abs{z_0^2z_1}, \abs{z_0^2z_2}, \abs{z_0z_2^2 + z_1^3}\leq 1}\dd z_0 \dd z_1 \dd z_2 .
  \end{align*}
\end{theorem}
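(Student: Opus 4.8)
The plan is to prove each of the four cases $i \in \{1, \ldots, 4\}$ separately by the universal torsor method, in the shape developed for the type $\Athree$ surface $S_0$ in \cite{arXiv:1302.6151}. The first task for each $i$ is geometric: to determine the configuration of lines on the minimal desingularization $\tS_i$ and of the exceptional curves over its $\ADE$-singularities (an extended Dynkin diagram), the Picard group --- which has rank $\rho = 6$, so that $(\log B)^{\rho - 1} = (\log B)^5$ occurs --- and, most importantly, the Cox ring of $\tS_i$ together with an explicit presentation by generators $\e_1, \ldots, \e_n$ and a small system of relations; the latter cut out the universal torsor over $\tS_i$. Pulling back the anticanonical height \eqref{eq:height} and exploiting unique factorization of ideals in $\OO_K$, one then obtains a bijection between $U_i(K)$ and a disjoint union, over finitely many tuples of ideal classes, of sets of integral points $\ee \in \OO_K^n$ on the torsor that satisfy the height bound $H(\ee) \le B$ and a list of coprimality conditions read off from the Dynkin diagram, taken modulo the action of $\OO_K^\times$. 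This reduces the theorem to an asymptotic count of such $\ee$.

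For the count I would first detach the coprimality conditions by M\"obius inversion over ideals, and then split the variables $\e_j$ into those not occurring in the torsor relations (the ``free'' variables, attached to lines) and the rest. Summing the free variables is a lattice-point count: for fixed values of the remaining variables one counts points of scaled fractional ideals of $\OO_K \subset \CC$ inside the region carved out by the height inequalities, via the standard estimate with covolume $\sqrt{|\Delta_K|}/2$ per variable and an explicit error term. Resumming the M\"obius moduli reconstitutes an Euler product, which is the same $\theta_0$ from \eqref{eq:def_theta_0} for all four surfaces because they have the same local point counts (at primes of good reduction, $\#\tS_i(\OO_K/\p) = \N\p^2 + 6\N\p + 1$, which is exactly the local factor in \eqref{eq:def_theta_0}); the class-number and unit contributions of the parametrization, together with the lattice covolumes and Peyre's discriminant normalization for a surface, produce the global factor $(2\pi)^6 h_K^6 \Delta_K^{-4} \omega_K^{-6}$.

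The main difficulty is the final step. After the lattice-point summation one is left with a multiple sum, over the torsor variables not yet summed, whose summand is itself a multidimensional real integral depending on all of those variables, and from it one must extract the main term $c_{S_i, H} B(\log B)^5$ with error $O(B (\log B)^4 \log\log B)$. This requires summing the variables one at a time, at each stage truncating tails, bounding the accumulated error, and tracking the logarithmic powers that build up, with estimates uniform and sharp enough that the six-dimensional Picard rank produces exactly $(\log B)^5$ in the main term while all the summations together cost only a single extra factor of $\log\log B$. I expect the bulk of the work to lie here, together with the case-by-case check that the geometric input is in the form this requires; it is handled by the real-analytic summation machinery that is the imaginary-quadratic analogue of the tools of \cite{arXiv:1302.6151} and of the $\QQ$-cases \cite{MR2520770, MR2543667, MR2290499, MR2320172}.

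It then remains to identify the leading constant. The coefficient emerging from the count factors as the product of a rational number $\alpha(\tS_i)$ --- the volume appearing in Peyre's constant, determined by the effective cone and anticanonical class of $\tS_i$, which evaluates to $1/8640$, $1/21600$, $1/34560$, $1/345600$ in the four cases --- the global factor $(2\pi)^6 h_K^6 \Delta_K^{-4} \omega_K^{-6}$, the non-archimedean density $\theta_0$, and the archimedean density $\omega_\infty(\tS_i)$, which is precisely the integral over the three complex torsor coordinates displayed in the statement. Comparing this with the Batyrev--Tschinkel/Peyre prediction \cite{MR1679843} for $c_{S_i, H}$ then completes the proof.
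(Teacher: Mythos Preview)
Your plan is the paper's approach. One clarification: in each case the Cox ring has nine generators and a \emph{single} relation, and only one variable is absent from that relation ($\e_3,\e_2,\e_1,\e_1$ for $S_1,\dots,S_4$), each attached to an exceptional $(-2)$-curve rather than a line; the paper does not sum that variable first. Instead, since $\e_9$ enters the relation linearly it is eliminated, and the first summation is over $\e_8$ subject to the congruence forcing $\e_9$ into its prescribed fractional ideal (via \cite[Proposition~5.3]{arXiv:1302.6151}), followed by $\e_7$ (with a dichotomy $\N\eI_6 \lessgtr \N\eI_7$ for $S_2$, and extra care in the $\Athree+\Aone$ case), and then the remaining five variables together via \cite[Proposition~7.3]{arXiv:1302.6151}. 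Your account of the leading constant and of why $\theta_0$ is common to all four surfaces is correct.
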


We note that Manin's conjecture for $S_4$ is implied by \cite{MR1906155} over
arbitrary number fields, since $S_4$ is an equivariant compactification of
$\GG_a^2$. On the other hand, $S_0, \dots, S_3$ are neither toric nor
equivariant compactifications of $\GG_a^2$ \cite{MR2753646}, so that
\cite{MR1620682, MR1906155} do not apply. Finally, $S_1$ and $S_3$ (but not
$S_0$, $S_2$, $S_4$) are equivariant compactifications of some semidirect
products $\GG_a \rtimes \GG_m$ \cite{arXiv:1212.3518}, so similar methods as
in \cite{MR1620682, MR1906155} may apply to them, but this has been worked out
only over $\QQ$ and with further restrictions in \cite{MR2858922}.

\subsection{Methods}
The general strategy in our proofs of Theorem \ref{thm:main} for $S_1$,
$\dots$, $S_4$ is the one proposed in \cite{arXiv:1302.6151}:

In a first step, the rational points $S_i(K)$ are parameterized by integral
points on universal torsors over $S_i$, satisfying certain \emph{height
  conditions} and \emph{coprimality conditions}, following the strategy from
\cite[Section 4]{arXiv:1302.6151}. Since the Cox rings of all minimal
desingularizations $\tS_i$ have only one relation \cite{math.AG/0604194}, the
universal torsors are open subsets of hypersurfaces in $\AAA^9_K$, with
coordinates $(\e_1, \dots, \e_9)$ and one relation, the \emph{torsor
  equation}.

In the second step, we approximate the number of these integral points on
universal torsors subject to height and coprimality conditions by an
integral. In all cases $\e_9$ appears linearly in the torsor equation, so it
is uniquely defined by $\e_1, \ldots, \e_8$. We first count pairs $(\e_8,
\e_9)$ for given $(\e_1, \ldots, \e_7)$ using the method from \cite[Section
5]{arXiv:1302.6151} and then sum the result over another variable using the
results from \cite[Section 6]{arXiv:1302.6151}. The summations over the
remaining variables are handled in all cases by a direct application of the
results of \cite[Section 7]{arXiv:1302.6151}.

In a third and final step, we show that the integrals from the second step
satisfy the asymptotic formulas from Theorem \ref{thm:main}. Here, the shape of
the effective cone of $\tS_i$ is crucial; after all, the volume of its dual
intersected with a certain hyperplane appears as $\alpha(\tS_i)$ in Peyre's
refinement \cite{MR1340296} of Manin's conjecture.

Though the proofs for $S_0$, $\ldots$, $S_4$ have many features in common, each
case has its own difficulties.

In the case of $S_0$, the first step is mostly covered by our general results
from \cite{arXiv:1302.6151}, whereas the second step requires dichotomies with
different orders of summation according to the relative size of the variables.

The first step in the case of $S_1$ is mostly covered by the general results as
well, but the second summation in the second step requires additional effort in
order to obtain sufficiently good error terms.

In the case of $S_2$, parts of the first step need to be treated individually,
and the second summation in the second step is more complicated, since $\e_8$
does not appear linearly in the torsor equation. Additionally, the second
summation requires a dichotomy similarly as in the case of $S_0$, in order to
handle the error terms.

The case of $S_3$ is probably the most simple one. Parts of the first step need
to be treated individually, but the summations in the second step go through
without additional tricks, so it just remains to bound the error terms.

Finally, in the case of $S_4$, parts of the first step need to be treated
individually, and the second summation in the second step is slightly more
complicated, since $\e_8$ does not appear linearly in the torsor equation.

\subsection{Notation}
Throughout this article, we use the notation introduced in
\cite[Section 1.4]{arXiv:1302.6151}. In particular, $\classrep$ denotes a fixed
system of integral representatives for the ideal classes of the ring of
integers $\OO_K$. Moreover, $\p$ always denotes a nonzero prime ideal of
$\OO_K$, and products indexed by $\p$ are understood to run over all such prime
ideals. We say that $x \in K$ is \emph{defined} (resp. \emph{invertible})
modulo an ideal $\aaa$ of $\OO_K$, if $v_\p(x) \geq 0$ (resp. $v_\p(x) = 0$) for
all $\p \mid \aaa$, where $v_\p$ is the usual $\p$-adic valuation. For $x,y$
defined modulo $\aaa$, we write $x
\equiv_\aaa y$ if $v_\p(x-y)\geq v_\p(\aaa)$ for all $\p \mid \aaa$.

\begin{ack}
  The first-named author was supported by grant DE 1646/2-1 of the Deutsche
  Forschungsgemeinschaft and by the Hausdorff Research Institute for
  Mathematics in Bonn which he would like to thank for the
  hospitality. The second-named author was partially supported by a research
  fellowship of the Alexander von Humboldt Foundation. This collaboration was
  supported by the Center for Advanced Studies of LMU M\"unchen.
\end{ack}

\section{The quartic del Pezzo surface of type $\Athree+\Aone$}

\subsection{Passage to a universal torsor}
Up to a permutation of the indices, we use the notation of \cite{math.AG/0604194}.
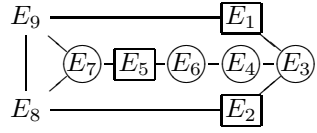
\begin{figure}[ht]
  \centering
  \[\xymatrix@R=0.05in @C=0.05in{E_9 \ar@{-}[dr] \ar@{-}[dd] \ar@{-}[rrrr]& & & & \li{1} \ar@{-}[dr]\\
    & \ex{7} \ar@{-}[r] & \li{5} \ar@{-}[r] & \ex{6} \ar@{-}[r] & \ex{4} \ar@{-}[r] & \ex{3}\\
    E_8 \ar@{-}[ur] \ar@{-}[rrrr] & & & & \li{2} \ar@{-}[ur]}\]
  \caption{Configuration of curves on $\tS_1$}
  \label{fig:A3+A1_dynkin}
\end{figure}

For any given $\classtuple = (C_0, \dots, C_5) \in \classrep^6$, we
define $u_\classtuple := \N(C_0^3 C_1^{-1}\cdots C_5^{-1})$ and
\begin{align*}
    \OO_1 &:= C_5 & \OO_2 &:= C_4 & \OO_3 &:= C_0 C_1^{-1} C_4^{-1} C_5^{-1}\\
    \OO_4 &:= C_1C_2^{-1} & \OO_5 &:= C_3 & \OO_6 &:= C_2 C_3^{-1} \\
    \OO_7 &:= C_0 C_1^{-1} C_2^{-1} C_3^{-1} & \OO_8 &:= C_0 C_4^{-1} & \OO_9 &:= C_0 C_5^{-1}.
\end{align*}
Let
\begin{equation*}
  \OO_{j*} :=
  \begin{cases}
    \OO_j^{\neq 0}, & j \in \{1,\ldots, 7\},\\
    \OO_j, & j \in \{8,9\}.
  \end{cases}
\end{equation*}
For $\eta_j \in \OO_j$, let
\begin{equation*}
  \eI_j := \e_j \OO_j^{-1}\text.
\end{equation*}
For $B \geq 0$, let $\mathcal{R}(B)$ be the set of all $(\e_1, \ldots, \e_8)
\in \CC^8$ with $\e_1 \neq 0$ and
\begin{align}
  \abs{\e_2\e_3\e_4\e_5\e_6\e_7\e_8}&\leq B,\label{eq:A3+A1_height_1}\\
  \abs{\e_1^2\e_2^2\e_3^3\e_4^2\e_6}&\leq B,\label{eq:A3+A1_height_2}\\
  \abs{\e_1\e_2\e_3^2\e_4^2\e_5^2\e_6^2\e_7} &\leq B,\label{eq:A3+A1_height_3}\\
  \abs{\e_3\e_4\e_5\e_6\e_7(\e_4\e_5^3\e_6^2\e_7 + \e_2\e_8)} &\leq B,\label{eq:A3+A1_height_4}\\
  \abs{\frac{\e_2\e_7\e_8^2 + \e_4\e_5^3\e_6^2\e_7^2\e_8}{\e_1}}&\leq
  B.\label{eq:A3+A1_height_5}
\end{align}
We observe for future reference that \eqref{eq:A3+A1_height_1}
and \eqref{eq:A3+A1_height_4} imply the condition
\begin{equation}
  \label{eq:A3+A1_height_4_useful}
  \abs{\e_3\e_4^2\e_5^4\e_6^3\e_7^2} \leq 4 B.
\end{equation}
Let $M_\classtuple(B)$ be the set of all
\begin{equation*}
  (\e_1, \ldots, \e_9) \in \OO_{1*} \times \cdots \times \OO_{9*} 
\end{equation*}
that satisfy the \emph{height conditions}
\begin{equation*}\label{eq:A3+A1_height}
  (\e_1, \ldots, \e_8) \in \mathcal{R}(u_\classtuple B)\text,
\end{equation*}
the \emph{torsor equation}
\begin{equation}\label{eq:A3+A1_torsor}
  \e_4\e_5^3\e_6^2\e_7 + \e_2\e_8 + \e_1\e_9 = 0,
\end{equation}
and the \emph{coprimality conditions}
\begin{equation}\label{eq:A3+A1_coprimality}
  \eI_j + \eI_k = \OO_K \text{ for all distinct nonadjacent vertices $E_j$, $E_k$ in Figure~\ref{fig:A3+A1_dynkin}.}
\end{equation}

\begin{lemma}\label{lem:A3+A1_passage_to_torsor}
 We have
  \begin{equation*}
    N_{U_1,H}(B) = \frac{1}{\omega_K^6}\sum_{\classtuple \in \classrep^6}|M_\classtuple(B)|.
  \end{equation*}
\end{lemma}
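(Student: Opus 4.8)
The plan is to realize $N_{U_1,H}(B)$ as a count of $K$-points of bounded height on $U_1$, lift these points to the universal torsor, and show that the bijective correspondence between rational points and torsor points (modulo units) matches the combinatorial data encoded in $M_\classtuple(B)$. First I would recall from \cite[Section 4]{arXiv:1302.6151} the general parameterization: a $K$-point $\xx \in U_1(K)$ is described by choosing, for each exceptional curve and line $E_1,\dots,E_9$ in Figure~\ref{fig:A3+A1_dynkin}, a coordinate $\e_j$ lying in an ideal class determined by the Picard group, with the $\e_j$ subject to the torsor equation \eqref{eq:A3+A1_torsor} (which is the single relation in the Cox ring, per \cite{math.AG/0604194}) and the coprimality conditions \eqref{eq:A3+A1_coprimality}, which reflect the intersection behaviour of the curves. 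The map from $(\e_1,\dots,\e_9)$ to $\xx$ is given by monomials in the $\e_j$ expressing $x_0,\dots,x_4$; substituting these monomials into the height \eqref{eq:height} produces precisely the inequalities \eqref{eq:A3+A1_height_1}--\eqref{eq:A3+A1_height_5} (after scaling $B$ by $u_\classtuple = \N(C_0^3C_1^{-1}\cdots C_5^{-1})$, which accounts for the ideal norms in the denominator of $H$).

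Concretely, I would proceed as follows. Step one: fix a minimal desingularization $\tS_1 \to S_1$ and identify the Cox ring, reading off from \cite{math.AG/0604194} the nine generators, their degrees in $\mathrm{Pic}(\tS_1)$, and the unique relation $\e_4\e_5^3\e_6^2\e_7 + \e_2\e_8 + \e_1\e_9 = 0$. Step two: apply the descent/parameterization machinery of \cite[Section 4]{arXiv:1302.6151}, which gives a bijection between $U_1(K)$ and the set of tuples $(\classtuple, (\e_j))$ with $\classtuple \in \classrep^6$, $\e_j \in \OO_{j*}$, satisfying \eqref{eq:A3+A1_torsor} and \eqref{eq:A3+A1_coprimality}, modulo the action of $\OO_K^\times$ on each coordinate — an action whose orbits have size $\omega_K$ on each of the six "free" torus directions, producing the factor $1/\omega_K^6$. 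Step three: express $x_0,\dots,x_4$ as the appropriate monomials in $\e_1,\dots,\e_9$ (using the table defining $\OO_1,\dots,\OO_9$ to check that these monomials land in $\OO_K$ and generate the correct fractional ideal), and verify that $H(\xx) \le B$ translates into $(\e_1,\dots,\e_8) \in \mathcal{R}(u_\classtuple B)$; this is where the five inequalities come from, the fifth one having $\e_1$ in the denominator because $\e_9$ has been eliminated via the torsor equation.

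The main obstacle is step three — the explicit bookkeeping. One must choose the monomial map $\AAA^9 \dashrightarrow S_1 \subset \PP^4$ correctly (it is determined only up to the torus action and the choice of which anticanonical section each $x_i$ corresponds to), check that the five coordinates $x_i$, when written in the $\e_j$, have no common factor forced by the coprimality relations (so that the ideal $x_0\OO_K+\dots+x_4\OO_K$ equals the expected product of the $\OO_j$, up to the class-group twist $\classtuple$), and confirm that after clearing denominators the max of the $\abs{x_i}$ divided by $\N(\sum x_i\OO_K)$ is exactly the left-hand side of \eqref{eq:A3+A1_height_1}--\eqref{eq:A3+A1_height_5} bounded by $u_\classtuple B$. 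Since \cite[Section 4]{arXiv:1302.6151} was written to handle exactly this type of del Pezzo surface with one Cox relation, most of the structural work is already available; what remains is to specialize it to the configuration of Figure~\ref{fig:A3+A1_dynkin} and to carry out the height computation, which is routine but lengthy. The auxiliary consequence \eqref{eq:A3+A1_height_4_useful} is an elementary deduction: bounding $\abs{\e_4\e_5^3\e_6^2\e_7}$ by adding \eqref{eq:A3+A1_height_4} and $\abs{\e_2\e_8}$ times the relevant monomial from \eqref{eq:A3+A1_height_1}, using $\abs{a+b} \le 2(\abs a + \abs b)$ for the complex square-absolute value.
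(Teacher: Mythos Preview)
Your proposal is correct and follows essentially the same approach as the paper: both recognize the statement as a specialization of \cite[Claim~4.1]{arXiv:1302.6151} and invoke the general machinery of \cite[Section~4]{arXiv:1302.6151}. The paper's proof supplies the concrete input data you allude to in step three---namely an explicit sequence of five blow-ups of $\PP^2_K$ producing $\tS_1$, the rational map $\psi:\PP^2_K\dashrightarrow S_1$, and the monomial map $\Psi$ expressing $(x_0:\cdots:x_4)$ in the torsor variables---and then observes that, for this surface, every blow-up step is covered by \cite[Lemma~4.4]{arXiv:1302.6151} with no additional coprimality checks needed, so the argument of \cite[Lemma~9.1]{arXiv:1302.6151} carries over verbatim.
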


\begin{proof}
  We observe that the statement of our lemma is a specialization of \cite[Claim
  4.1]{arXiv:1302.6151}. We prove it using the strategy from \cite[Section
  4]{arXiv:1302.6151} based on the construction of the minimal
  desingularization $\pi : \tS_1 \to S_1$ by the following sequence of blow-ups:
  Starting with the curves $E^{(0)}_8 := \{y_0=0\}$, $E_3^{(0)} := \{y_1=0\}$,
  $E_9^{(0)} := \{y_2 = 0\}$, $E_7^{(0)} := \{-y_0 - y_2 = 0\}$ in $\PP_K^2$,
  we
  \begin{enumerate}
  \item blow up $E^{(0)}_3 \cap E^{(0)}_7$, giving $E_4^{(1)}$,
  \item blow up $E_4^{(1)} \cap E_7^{(1)}$, giving $E_6^{(2)}$,
  \item blow up $E_6^{(2)} \cap E_7^{(2)}$, giving $E_5^{(3)}$,
  \item blow up $E_3^{(3)} \cap E_8^{(3)}$, giving $E_2^{(4)}$,
  \item blow up $E_3^{(4)} \cap E_9^{(4)}$, giving $E_1^{(5)}$.
  \end{enumerate}
  With the inverse $\pi \circ \rho^{-1} : \PP^2_K \rto S_1$ of the projection
  $\phi = \rho \circ \pi^{-1} : S_1 \rto \PP^2_K$, $(x_0 :
  \cdots : x_4) \mapsto (x_0 : x_2 : x_3)$ given by
  \begin{equation}\label{eq:A3+A1_parameterization}
  \psi((y_0 : y_1 : y_2)) = (y_0y_1(y_0+y_2) : -y_1^3 : y_1^2(y_0+y_2) : y_1y_2(y_0+y_2) : y_0y_2(y_0+y_2))
  \end{equation}
  and the map $\Psi$ from \cite[Claim 4.2]{arXiv:1302.6151} sending $(\e_1,
  \ldots, \e_9)$ to 
  \begin{equation*}
    (\e_2\e_3\e_4\e_5\e_6\e_7\e_8, -\e_1^2\e_2^2\e_3^3\e_4^2\e_6 , \e_1\e_2\e_3^2\e_4^2\e_5^2\e_6^2\e_7 ,\e_1\e_3\e_4\e_5\e_6\e_7\e_9, \e_7\e_8\e_9),
  \end{equation*}
  we can proceed exactly as in the proof of \cite[Lemma
  9.1]{arXiv:1302.6151}.
\end{proof}

\subsection{Summations}
\subsubsection{The first summation over $\e_8$ with dependent $\e_9$}
\begin{lemma}\label{lem:A3+A1_first_summation}
  Write $\ee' := (\e_1, \ldots, \e_7)$ and $\eII' := (\eI_1, \ldots, \eI_7)$. For
  $B > 0$, $\classtuple \in \classrep^6$, we have
  \begin{equation*}
    |M_\classtuple(B)| = \frac{2}{\sqrt{|\Delta_K|}} \sum_{\ee' \in \OO_{1*} \times \dots \times \OO_{7*}} \theta_8(\eII')V_8(\N\eI_1, \ldots, \N\eI_7; B) + O_\classtuple(B(\log B)^2),
  \end{equation*}
  where
  \begin{equation*}
    V_8(t_1, \ldots, t_7; B) :=  \frac{1}{t_1}\int_{h(\sqrt{t_1}, \ldots, \sqrt{t_7},\e_8; B)\leq 1} \dd \e_8,
  \end{equation*}
  with a complex variable $\e_8$, and where
  \begin{equation*}
    \theta_8(\eII') := \prod_{\id p} \theta_{8,\id p}(J_{\id p}(\eII')),
  \end{equation*}
  with $J_{\id p}(\eII') := \{j \in \{1, \dots, 7\}\ :\ \id p \mid
  \eI_j\}$ and
  \begin{equation*}
    \theta_{8,\id p}(J) :=
    \begin{cases}
      1 &\text{ if }J = \emptyset\text{, }\{1\}\text{, }\{2\}\text{, }\{7\}\text,\\
      1-\frac{1}{\N\id p} &\text{ if }J = \{4\}\text{,}\{5\}\text{,}\{6\}\text{,}\{1, 3\}\text{,}\{2, 3\}\text{,}\{3, 4\}\text{,}\{4, 6\}\text{,}\{5, 6\}\text{,}\{5, 7\}\text{,}\\
      1-\frac{2}{\N\id p} &\text{ if }J = \{3\}\text,\\
      0 &\text{otherwise.}
    \end{cases}
  \end{equation*}
\end{lemma}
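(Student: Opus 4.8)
The plan is to carry out the first summation over the two dependent variables $\e_8, \e_9$ by a direct application of the counting machinery of \cite[Section~5]{arXiv:1302.6151}. We fix $\classtuple \in \classrep^6$ and $\ee' = (\e_1, \ldots, \e_7) \in \OO_{1*} \times \dots \times \OO_{7*}$, and examine the conditions on the remaining variables $\e_8 \in \OO_8 = C_0C_4^{-1}$ and $\e_9 \in \OO_9 = C_0C_5^{-1}$. The torsor equation \eqref{eq:A3+A1_torsor} expresses $\e_9 = -(\e_4\e_5^3\e_6^2\e_7 + \e_2\e_8)/\e_1$ uniquely in terms of $\e_8$ (and $\ee'$); the condition that this $\e_9$ lies in the correct fractional ideal $\OO_9$ becomes a congruence condition on $\e_8$ modulo a suitable ideal built from $\eI_1$, together with the requirement $\e_4\e_5^3\e_6^2\e_7 \equiv 0$ appropriately (which is automatic from $\ee' \in \OO_{1*}\times\cdots\times\OO_{7*}$ up to the ideal structure). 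The coprimality conditions \eqref{eq:A3+A1_coprimality} involving $E_8$ and $E_9$ translate, by the standard Möbius-inversion argument, into a sum over ideals dividing fixed data, and after Möbius inversion the count of admissible $\e_8$ in a region becomes a main term equal to $\operatorname{vol}/\N(\text{modulus})$ times a local density factor, plus an error term of the size of the boundary of the region. The height conditions \eqref{eq:A3+A1_height_1}--\eqref{eq:A3+A1_height_5} are precisely a condition $h(\dots, \e_8; B) \le 1$ on $\e_8$ for a suitable function $h$ depending only on $\abs{\e_1}, \ldots, \abs{\e_7}$ and $B$ (note every occurrence of $\e_8$ or $\e_9$ in those inequalities is governed by the absolute values of $\e_1, \ldots, \e_7$ after substituting for $\e_9$), so the region in the $\e_8$-plane is of the controlled type required by \cite[Section~5]{arXiv:1302.6151}.

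Concretely, the steps are: (i) substitute $\e_9$ from the torsor equation and record the congruence condition on $\e_8$ modulo $\eI_1$ that encodes $\e_9 \in \OO_9$; (ii) expand the coprimality conditions between $E_8$, $E_9$ and their non-adjacent neighbours via Möbius inversion over ideals, absorbing the conditions into a restricted lattice of $\e_8$; (iii) apply the lattice-point-counting lemma of \cite[Section~5]{arXiv:1302.6151} to the region $\{h(\sqrt{\N\eI_1}, \ldots, \sqrt{\N\eI_7}, \e_8; B) \le 1\}$, producing the main term $\frac{2}{\sqrt{|\Delta_K|}} \theta_8(\eII') V_8(\N\eI_1, \ldots, \N\eI_7; B)$ with $\theta_8$ the resulting Euler product of local densities, and an error term controlled by the measure of the boundary; (iv) verify the case analysis for $\theta_{8,\p}(J)$: for each subset $J \subseteq \{1, \ldots, 7\}$ of indices of variables divisible by $\p$, determine the local density of admissible $\e_8 \bmod \p$-power, checking that it vanishes exactly in the "otherwise" cases (where the torsor equation forces an incompatibility modulo $\p$) and equals $1$, $1 - \N\p^{-1}$, or $1 - 2\N\p^{-1}$ in the listed cases; (v) sum the error terms over $\ee'$, using the height conditions to bound the ranges of $\e_1, \ldots, \e_7$, and check that the total is $O_\classtuple(B(\log B)^2)$.

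The main obstacle is step (iv), the explicit local computation of $\theta_{8,\p}$: one must go through the configuration in Figure~\ref{fig:A3+A1_dynkin} to see which pairs among $E_1, \ldots, E_7$ are non-adjacent (hence impose coprimality) and which pairs, together with $E_8, E_9$, are forced by the torsor equation $\e_4\e_5^3\e_6^2\e_7 + \e_2\e_8 + \e_1\e_9 = 0$ to satisfy or violate divisibility by $\p$ — for instance, if $\p$ divides $\eI_3$ but none of $\eI_1, \eI_2$, then $\e_8$ must avoid two residue classes (those making $\e_2\e_8$ or $\e_1\e_9$ divisible by $\p$ in an incompatible way), giving the factor $1 - 2\N\p^{-1}$; and the "otherwise" vanishing reflects genuinely incompatible divisibility patterns (e.g. $\p$ dividing $\eI_1$ and $\eI_2$ simultaneously, which is already excluded by coprimality, or $\p$ dividing certain triples). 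This bookkeeping, while elementary, is where all the case distinctions live; everything else is a routine invocation of the general results of \cite{arXiv:1302.6151} exactly as in the proof of \cite[Lemma~9.3]{arXiv:1302.6151} for $S_0$. Summing the error terms in step (v) is likewise routine given the height conditions, since \eqref{eq:A3+A1_height_1}--\eqref{eq:A3+A1_height_3} and \eqref{eq:A3+A1_height_4_useful} bound $\abs{\e_2}, \ldots, \abs{\e_7}$ polynomially in $B$ and the number of ideals of bounded norm grows logarithmically.
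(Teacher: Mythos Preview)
Your proposal is correct and follows essentially the same route as the paper: both apply \cite[Proposition~5.3]{arXiv:1302.6151} (with the specific assignment $(A_1,A_2,A_3,A_0)=(4,6,5,7)$, $(B_1,B_0)=(2,8)$, $(C_1,C_0)=(1,9)$, $D=3$) to carry out the $\e_8$-summation, identify the resulting main term with $\theta_8(\eII')V_8$ as in \cite[Lemma~9.2]{arXiv:1302.6151}, and then sum the error. The only place where your sketch is vaguer than the paper is step~(v): to actually land on $O_\classtuple(B(\log B)^2)$ you need the explicit radius bound $R(\ee';u_\classtuple B)\ll_\classtuple (B\,\N(\eI_1\eI_2^{-1}\eI_7^{-1}))^{1/4}$ coming from \eqref{eq:A3+A1_height_5} via \cite[Lemma~3.4(1)]{arXiv:1302.6151}, and then sum under the constraint $\N(\eI_1\eI_2\eI_3^2\eI_4^2\eI_5^2\eI_6^2\eI_7)\le B$ from \eqref{eq:A3+A1_height_3}; the rest is as you say routine.
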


\begin{proof}
  By \cite[Lemma 3.2]{arXiv:1302.6151}, the set $\mathcal{R}(\ee',
  u_\classtuple B)$ of all $\e_8 \in \CC$ with $(\e_1, \ldots, \e_8) \in
  \mathcal{R}(u_\classtuple B)$ has class $m$, with an absolute constant
  $m$. Moreover, by \cite[Lemma 3.4, (1)]{arXiv:1302.6151} applied to
  \eqref{eq:A3+A1_height_5}, this set is contained in the union of at most $2$
  balls of radius
  \begin{equation*}
    R(\ee'; u_\classtuple B) :=  (u_\classtuple B\abs{\e_1 \e_2^{-1} \e_7^{-1}})^{1/4}\ll_\classtuple(B\N(\eI_1\eI_2^{-1} \eI_7^{-1}))^{1/4}
  \end{equation*}
  We apply \cite[Proposition 5.3]{arXiv:1302.6151} with $(A_1, A_2, A_3, A_0) :=
  (4, 6, 5, 7)$, $(B_1, B_0) := (2, 8)$, $(C_1, C_0) :=  (1, 9)$, $D:=3$, and
  $u_\classtuple B$ instead of $B$. (Moreover, we choose $\Ao$ and
  $\At$ as in \cite[Remark 5.2]{arXiv:1302.6151}.)

  Similarly as in \cite[Lemma 9.2]{arXiv:1302.6151}, we see that the resulting
  main term is the one given in the lemma.
  The error term from \cite[Proposition 5.3]{arXiv:1302.6151} is
  \begin{equation*}
    \ll \sum_{\ee',\ \eqref{eq:A3+A1_third_height_cond_ideals}}2^{\omega_K(\eI_3)+\omega_K(\eI_3\eI_4\eI_5\eI_6)}\left(\frac{R(\ee'; u_\classtuple B)}{\N(\eI_1)^{1/2}}+1\right)\text,
  \end{equation*}
  where, using \eqref{eq:A3+A1_height_3} and the definitions of $u_\classtuple$
  and the $\OO_j$, the sum runs over all $\ee'$ with
  \begin{equation}
    \N(\eI_1 \eI_2 \eI_3^2\eI_4^2\eI_5^2\eI_6^2 \eI_7) \leq
    B \label{eq:A3+A1_third_height_cond_ideals}.
  \end{equation}
  Since $|\OO_K^\times| < \infty$, we can sum over the $I_j$ instead of the
  $\e_j$, which then run over all nonzero ideals of $\OO_K$ with
  \eqref{eq:A3+A1_third_height_cond_ideals}, so the error term is bounded by
  \begin{align*}
      &\ll_\classtuple \sum_{\substack{\eII',\ \eqref{eq:A3+A1_third_height_cond_ideals}}}2^{\omega_K(\eI_3)+\omega_K(\eI_3\eI_4\eI_5\eI_6)}\left(\frac{B^{1/4}}{\N\eI_1^{1/4}N\eI_2^{1/4}\N\eI_7^{1/4}}+1\right)\\
      &\ll\sum_{\substack{\eI_1, \dots, \eI_6\\\N\eI_j\leq B}}\left(\frac{2^{\omega_K(\eI_3)+\omega_K(\eI_3\eI_4\eI_5\eI_6)}B}{\N\eI_1\N\eI_2\N\eI_3^{3/2}\N\eI_4^{3/2}\N\eI_5^{3/2}\N\eI_6^{3/2}}+\frac{2^{\omega_K(\eI_3)+\omega_K(\eI_3\eI_4\eI_5\eI_6)}B}{\N\eI_1\N\eI_2\N\eI_3^2\N\eI_4^2\N\eI_5^2\N\eI_6^2}\right)\\
      &\ll B(\log B)^2 + B(\log B)^2 \ll B(\log B)^2.\qedhere
  \end{align*}
\end{proof}

\subsubsection{The second summation over $\e_7$.}

\begin{lemma}\label{lem:A3+A1_second_summation}
  Write $\ee'' := (\e_1, \dots, \e_6)$. For $B \geq 3$, $\classtuple \in \classrep^6$, we have
  \begin{align*}
    |M_\classtuple(B)| &=
    \left(\frac{2}{\sqrt{|\Delta_K|}}\right)^2\sum_{\ee''\in \OO_{1*} \times \dots
      \times \OO_{6*}} \mathcal{A}(\theta_8(\eII'), \eI_7)V_7(\N\eI_1, \ldots, \N\eI_6; B)\\ &+ O_\classtuple(B(\log B)^4\log\log
    B).
  \end{align*}
  Here, for $t_1, \ldots, t_6 \geq 1$,
  \begin{equation*}
    V_7(t_1, \ldots, t_6; B) := \frac{\pi}{t_1}\int_{\substack{(\sqrt{t_1}, \ldots,
      \sqrt{t_7}, \e_8) \in \mathcal{R}(B)\\t_7\geq 1}} \dd t_7 \dd \e_8,
  \end{equation*}
  with a real variable $t_7$ and a complex variable $\e_8$.
\end{lemma}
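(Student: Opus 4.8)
The plan is to sum the expression for $|M_\classtuple(B)|$ from Lemma~\ref{lem:A3+A1_first_summation} over $\e_7$, treating $\ee'' = (\e_1,\dots,\e_6)$ as fixed, and to convert this sum over $\e_7$ into an integral over a real variable $t_7$. The key observation, as in the first summation, is that $\e_7$ enters $\theta_8(\eII')$ only through the arithmetic function $\mathcal{A}(\theta_8(\eII'),\eI_7)$ (the averaging operator of \cite[Section~6]{arXiv:1302.6151} applied in the variable $\e_7$), while in the height region $\mathcal{R}(u_\classtuple B)$ the variable $\e_7$ ranges over a domain whose defining conditions \eqref{eq:A3+A1_height_1}, \eqref{eq:A3+A1_height_3}, \eqref{eq:A3+A1_height_4}, \eqref{eq:A3+A1_height_5} are all of the shape covered by \cite[Section~7]{arXiv:1302.6151}. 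So I would first record that $V_8(\N\eI_1,\dots,\N\eI_7;B)$, as a function of $t_7 = \N\eI_7$, is monotonic on suitable intervals and is supported where $t_7 \ll (B/\cdots)^{c}$ for an explicit exponent $c$ coming from \eqref{eq:A3+A1_height_4_useful}; this is exactly the input needed to apply the summation-over-one-variable results of \cite[Section~7]{arXiv:1302.6151} to $\sum_{\e_7} \theta_8(\eII') V_8(\dots;B)$.

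Next I would invoke the relevant proposition from \cite[Section~7]{arXiv:1302.6151} (the same one used in \cite[Lemma~9.3]{arXiv:1302.6151}) to replace $\sum_{\e_7 \in \OO_{7*}} \theta_8(\eII') V_8(\N\eI_1,\dots,\N\eI_7;B)$ by
$\frac{2}{\sqrt{|\Delta_K|}} \mathcal{A}(\theta_8(\eII'),\eI_7) \cdot \frac{\pi}{1}\int_{t_7 \ge 1} V_8(\N\eI_1,\dots,\N\eI_6, t_7; B)\,\dd t_7$,
which, after substituting the definition of $V_8$ and absorbing the $1/t_1$ factor, is precisely $\frac{2}{\sqrt{|\Delta_K|}} \mathcal{A}(\theta_8(\eII'),\eI_7)\, V_7(\N\eI_1,\dots,\N\eI_6;B)$. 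The factor $(2/\sqrt{|\Delta_K|})^2$ then appears by combining this with the prefactor already present from Lemma~\ref{lem:A3+A1_first_summation}. Inserting this into Lemma~\ref{lem:A3+A1_first_summation} and carrying the earlier error term $O_\classtuple(B(\log B)^2)$ along gives the main term of the statement; it remains to bound the new error term produced by the $\e_7$-summation.

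The main obstacle is controlling this new error term so that it is $O_\classtuple(B(\log B)^4\log\log B)$ rather than something larger — this is the point flagged in the introduction as requiring "additional effort in order to obtain sufficiently good error terms." The error from \cite[Section~7]{arXiv:1302.6151} will have the shape of a sum over $\eI_1,\dots,\eI_6$ (and the remaining divisor-type weights $2^{\omega_K(\cdots)}$ coming from $\theta_8$) of a partial derivative of $V_8$ in $t_7$ evaluated at the endpoints, plus a term measuring the oscillation of the arithmetic weight. I would bound $\partial V_8/\partial t_7$ using the explicit form of the height conditions, reduce everything to ideals (legitimate since $|\OO_K^\times| < \infty$), and then estimate the resulting six-fold sum $\sum_{\N\eI_j \le B} 2^{\omega_K(\eI_3)+\omega_K(\eI_3\eI_4\eI_5\eI_6)} B / (\N\eI_1 \N\eI_2 \N\eI_3^{3/2}\cdots)$ by standard Dirichlet-series / Rankin-trick estimates, exactly as at the end of the proof of Lemma~\ref{lem:A3+A1_first_summation}; the extra $\log\log B$ is the price of the boundary terms in the innermost summation. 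Once this bound is in place, combining it with the previously accumulated error completes the proof.
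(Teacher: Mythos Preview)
Your high-level outline is correct, but you have missed the key technical device, and the approach you sketch for the error term would fail. The paper explicitly notes that ``a straightforward application of \cite[Proposition~6.1]{arXiv:1302.6151} would not yield sufficiently good error terms''; in other words, the generic summation machinery from \cite[Section~6]{arXiv:1302.6151} (not Section~7, which is used only later for the remaining five summations) applied directly to $\sum_{\e_7}\theta_8(\eII')g(\N\eI_7)$ produces an error that, after summing over $\ee''$, exceeds $B(\log B)^5$. The bound $g(t)\ll B/(\N\eI_1\cdots\N\eI_6 t)$ alone, combined with the support condition from \eqref{eq:A3+A1_height_4_useful}, gives an error of size roughly $2^{\omega_K(\eI_1\cdots\eI_4\eI_6)}B/(\N\eI_1\cdots\N\eI_6)$ per $\ee''$, and summing this over six variables of size up to $B$ produces too many logarithms. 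Your suggestion to ``estimate the resulting six-fold sum \ldots\ exactly as at the end of the proof of Lemma~\ref{lem:A3+A1_first_summation}'' therefore does not go through, and your explanation that the $\log\log B$ is ``the price of the boundary terms in the innermost summation'' is not where it actually comes from.

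What the paper does instead is introduce a threshold $t_1:=(\log B)^{14}$ and split the $\e_7$-sum into the ranges $\N\eI_7>t_1$ and $\N\eI_7\le t_1$. On the large range, Proposition~6.1 is applied as you describe, but the resulting error now carries an extra factor $t_1^{-1/2}=(\log B)^{-7}$, which beats the $2^{\omega_K(\cdots)}$ weights and collapses the six-fold sum to $O(B(\log B)^4)$. On the small range $\N\eI_7\le t_1$, one forgoes Proposition~6.1 entirely and instead uses the sharper pointwise bound
\[
g(t)\ll \frac{B}{\N\eI_1\cdots\N\eI_6\,t}\left(\frac{B}{\N\eI_1^2\N\eI_2^2\N\eI_3^3\N\eI_4^2\N\eI_6}\right)^{-1/4}\left(\frac{B}{\N\eI_3\N\eI_4^2\N\eI_5^4\N\eI_6^3 t^2}\right)^{-1/4}
\]
coming from \cite[Lemma~3.4,(2)]{arXiv:1302.6151} applied to \eqref{eq:A3+A1_height_5}; after summing over $\eI_1$ and $\eI_5$ this leaves $\sum_{\N\eI_7\le t_1}1/\N\eI_7\ll\log t_1\ll\log\log B$, and that is the true origin of the $\log\log B$ in the final error. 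The matching estimate for $\int_1^{t_1}g(t)\,\dd t$ completes the comparison of main terms. This splitting trick is the ``additional effort'' alluded to in the introduction, and it is absent from your plan.
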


\begin{proof}
  Following the strategy described in \cite[Section 6]{arXiv:1302.6151} in the case
  $b_0=1$, we write 
  \begin{equation}\label{eq:A3+A1_sec_sum_start}
    |M_\classtuple(B)| = \frac{2}{\sqrt{|\Delta_K|}}\sum_{\ee'' \in \OO_{1*} \times \dots \times \OO_{6*}}\sum_{\e_7 \in \OO_{7*}}\vartheta(\eI_7)g(\N\eI_7) + O_\classtuple(B(\log B)^2)\text,
  \end{equation}
  where $\vartheta(\aaa) := \theta_8(\eI_1, \ldots, \eI_6, \aaa)$ and $g(t) :=
  V_8(\N\eI_1, \ldots, \N\eI_6, t; B)$.  The conditions
  \eqref{eq:A3+A1_height_2} and \eqref{eq:A3+A1_height_4_useful} imply that
  $g(t) = 0$ unless
  \begin{equation}\label{eq:A3+A1_second_fourth_height_cond_ideals}
    \N\eI_1^2\N\eI_2^2\N\eI_3^3\N\eI_4^2\N\eI_6 \leq B\quad \text{ and }\quad t \leq t_2 := \left(\frac{4B}{\N\eI_3\N\eI_4^2\N\eI_5^4\N\eI_6^3}\right)^{1/2}\text.
  \end{equation}
  Moreover, applying \cite[Lemma 3.4, (2)]{arXiv:1302.6151} to
  \eqref{eq:A3+A1_height_5}, we see that
  \begin{align*}
    g(t) &\ll \frac{1}{\N\eI_1}\cdot \left(\frac{\N\eI_1 B}{\N\eI_2
        t}\right)^{1/2}\\ &= \frac{B}{\N\eI_1 \cdots \N\eI_6
      t}\left(\frac{B}{\N\eI_1^2\N\eI_2^2\N\eI_3^3\N\eI_4^2\N\eI_6}\right)^{-1/4}\left(\frac{B}{\N\eI_3\N\eI_4^2\N\eI_5^4\N\eI_6^3t^2}\right)^{-1/4}\text.
  \end{align*}
  In particular, we always have $g(t) \ll B/(\N\eI_1 \cdots \N\eI_6 t)$.

  By \cite[Lemma 5.4, Lemma 2.2]{arXiv:1302.6151}, $\vartheta$ satisfies the
  condition \cite[(6.1)]{arXiv:1302.6151} with $C = 0$ and $c_\vartheta =
  2^{\omega(\eI_1 \cdots \eI_4 \eI_6)}$.
 
  Let $t_1 := (\log B)^{14}$. A straightforward application of \cite[Proposition
  6.1]{arXiv:1302.6151} would not yield sufficiently good error terms, so,
  using a strategy as in the proof of \cite[Proposition
  7.2]{arXiv:1302.6151}, we split the sum over $\e_7$ into the two cases
  $\N\eI_7 \leq t_1$ and $\N\eI_7 > t_1$.

  Let us start with the second case. We may assume that $t_2 \geq t_1$. Using
  \cite[Proposition 6.1]{arXiv:1302.6151} with the upper bound
  $g(t) \ll B/(\N\eI_1\cdots \N\eI_6 t$), we see that
  \begin{align*}
    \sum_{\substack{\e_7 \in \OO_{7*}\\\N\eI_7 >
        t_1}}\vartheta(\eI_7)g(\N\eI_7) &= \frac{2
      \pi}{\sqrt{|\Delta_K|}}\mathcal{A}(\vartheta(\aaa),\aaa, \OO_K)\int\limits_{t
      \geq t_1}g(t)\dd t\\ &+ O\left(\frac{2^{\omega_K(\eI_1\cdots
          \eI_4\eI_6)}B}{\N\eI_1 \cdots \N\eI_6}t_1^{-1/2}\right)\text.
  \end{align*}
  When summing the error term over the remaining variables, we may sum
  over all $\eII''$ with $\N\eI_j \leq B$, so the error term is
  \begin{equation*}
      \ll t_1^{-1/2}\sum_{\eII''}\frac{2^{\omega_K(\eI_1\cdots
          \eI_4\eI_6)}B}{\N\eI_1\cdots\N\eI_6} \ll (\log B)^{-7}B(\log
      B)^{11} = B(\log B)^4\text.
  \end{equation*}
  Now let us consider the sum over all $\e_7$ with $\N\eI_7 \leq
  t_1$. Since $0\leq\vartheta(\eI_7) \leq 1$, we obtain an upper bound
  \begin{align*}
    &\ \sum_{\ee'' \in \OO_{1*} \times \dots \times \OO_{6*}}\sum_{\substack{\e_7 \in \OO_{7*}\\\N\eI_7 \leq t_1}}\vartheta(\eI_7)g(\N\eI_7)\\
    &\ll\hspace{-0.5cm}
    \sum_{\substack{\eII'',\eI_7\\\eqref{eq:A3+A1_second_fourth_height_cond_ideals}\text{
        with }t=\N\eI_7\\\N\eI_7 \leq t_1}}\hspace{-0.6cm}\frac{B}{\N\eI_1 \cdots \N\eI_6 \N\eI_7}\left(\frac{B}{\N\eI_1^2\N\eI_2^2\N\eI_3^3\N\eI_4^2\N\eI_6}\right)^{-\frac{1}{4}}\left(\frac{B}{\N\eI_3\N\eI_4^2\N\eI_5^4\N\eI_6^3\N\eI_7^2}\right)^{-\frac{1}{4}}\\
    &\ll\hspace{-0.5cm} \sum_{\substack{\eI_2, \ldots, \eI_7\\\eqref{eq:A3+A1_second_fourth_height_cond_ideals}\text{
        with }t=\N\eI_7\\\N\eI_7 \leq t_1}}\frac{B}{\N\eI_2 \cdots \N\eI_7}\left(\frac{B}{\N\eI_3\N\eI_4^2\N\eI_5^4\N\eI_6^3\N\eI_7^2}\right)^{-\frac{1}{4}}\\
    &\ll\hspace{-0.5cm} \sum_{\substack{\eI_2, \eI_3, \eI_4, \eI_6, \eI_7\\N\eI_j \leq B,\ \N\eI_7 \leq t_1}}\frac{B}{\N\eI_2 \N\eI_3 \N\eI_4 \N\eI_6 \N\eI_7} \ll B(\log B)^4\log t_1 \ll B(\log B)^4\log\log B\text.
  \end{align*}
  Our proof is finished once we see that
  \begin{equation*}
    \sum_{\ee'' \in \OO_{1*} \times \dots \times \OO_{6*}} \mathcal{A}(\vartheta(\aaa), \aaa) \int_{1}^{t_1}g(t)\dd t \ll B(\log B)^4\log\log B\text.
  \end{equation*}
  This follows from an analogous computation as above with the
  integral over $t$ instead of the sum over $\eI_7$, and using that $0
  \leq \mathcal{A}(\vartheta(\aaa), \aaa) \leq 1$.
\end{proof}

\begin{lemma}\label{lem:A3+A1_second_summation_ideals}
  If $\eII''$ runs over all six-tuples $(\eI_1, \ldots, \eI_6)$ of
  nonzero ideals of $\OO_K$ then we have
  \begin{align*}
    N_{U_1,H}(B) &=
    \left(\frac{2}{\sqrt{|\Delta_K|}}\right)^2\sum_{\eII''}\mathcal{A}(\theta_8(\eII'',
    \eI_7), \eI_7) V_7(\N\eI_1, \ldots, \N\eI_6; B)\\ &+ O(B(\log B)^4\log
    \log B)\text.
  \end{align*}
\end{lemma}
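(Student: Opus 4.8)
The plan is to combine Lemma~\ref{lem:A3+A1_passage_to_torsor} with Lemma~\ref{lem:A3+A1_second_summation} and then to replace the summation over tuples $\ee'' \in \OO_{1*} \times \dots \times \OO_{6*}$ by a summation over arbitrary six-tuples of nonzero ideals. First I would plug the formula for $|M_\classtuple(B)|$ from Lemma~\ref{lem:A3+A1_second_summation} into the identity $N_{U_1,H}(B) = \omega_K^{-6}\sum_{\classtuple \in \classrep^6}|M_\classtuple(B)|$ of Lemma~\ref{lem:A3+A1_passage_to_torsor}. Since $|\classrep| = h_K$, the error term $O_\classtuple(B(\log B)^4\log\log B)$ becomes $O(B(\log B)^4\log\log B)$ after summing over the finitely many classes, and the main term becomes
\begin{equation*}
  \frac{1}{\omega_K^6}\left(\frac{2}{\sqrt{|\Delta_K|}}\right)^2\sum_{\classtuple \in \classrep^6}\ \sum_{\ee''\in \OO_{1*} \times \dots \times \OO_{6*}} \mathcal{A}(\theta_8(\eII''), \eI_7)\, V_7(\N\eI_1, \ldots, \N\eI_6; B)\text.
\end{equation*}
The key point is that the summand depends on $\ee''$ only through the ideals $\eI_j = \e_j\OO_j^{-1}$: the weight $\theta_8(\eII'')$ is a function of $\eII''$ by definition, and $V_7(\N\eI_1, \dots, \N\eI_6; B)$ depends only on the norms $\N\eI_j$. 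This is exactly the situation handled abstractly in \cite[Section 7]{arXiv:1302.6151}.

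Next I would carry out the standard bijection between the indexing sets. For a fixed $\classtuple$ and fixed $j \in \{1, \dots, 6\}$, as $\e_j$ runs over $\OO_{j*} = \OO_j^{\neq 0}$ modulo units $\OO_K^\times$, the ideal $\eI_j = \e_j\OO_j^{-1}$ runs exactly over the nonzero integral ideals in the ideal class $[\OO_j]^{-1} = [C_0^{-1}C_1\cdots C_5]$ (with the obvious modification depending on which $\OO_j$ is meant); each such ideal is hit exactly once. Conversely, summing over all $\classtuple \in \classrep^6$ makes the classes $[\OO_1]^{-1}, \dots, [\OO_6]^{-1}$ range over all of the class group (with multiplicity governed by how the $C_i$ enter the $\OO_j$), and one checks, exactly as in \cite[Section 7]{arXiv:1302.6151} and in the proof of \cite[Lemma 9.3]{arXiv:1302.6151}, that the combined effect of $\sum_{\classtuple}\sum_{\ee''}$ together with the factor $\omega_K^{-6}$ and the unit-ambiguity is precisely to produce a single sum over all six-tuples $(\eI_1, \dots, \eI_6)$ of nonzero ideals of $\OO_K$ with weight $\mathcal{A}(\theta_8(\eII'', \eI_7), \eI_7)\, V_7(\N\eI_1, \dots, \N\eI_6; B)$. (Here one uses that $|\OO_K^\times| = \omega_K$, so the sum over $\e_j \in \OO_{j*}$ overcounts each ideal $\eI_j$ by a factor $\omega_K$, matching the $\omega_K^{-6}$; the remaining normalization $(2/\sqrt{|\Delta_K|})^2$ and the absorption of $u_\classtuple$ into the height conditions have already been accounted for in the earlier lemmas.) Care is needed to confirm that the dependence of $\theta_8$ on the full seven-tuple $\eII'$ (through the extra variable $\eI_7$ inside $\mathcal{A}$) does not interfere, but since $\mathcal{A}(\cdot, \eI_7)$ only involves $\eI_7$ as an averaging variable and $\theta_8(\eII'', \eI_7)$ genuinely depends only on the ideals, this is harmless.

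Finally I would simply invoke the results of \cite[Section 7]{arXiv:1302.6151} as the general machinery that legitimizes this replacement, exactly as announced in the ``Methods'' subsection (``The summations over the remaining variables are handled in all cases by a direct application of the results of \cite[Section 7]{arXiv:1302.6151}''), and conclude the stated formula. The main obstacle is purely bookkeeping: verifying that the ideal classes of the $\OO_j$ as given by the explicit table (with $u_\classtuple = \N(C_0^3C_1^{-1}\cdots C_5^{-1})$) really do sweep out the full class group in the right way, so that no class is over- or under-counted. This is a routine but slightly tedious check of the kind already performed for $S_0$ in \cite[Lemma 9.3]{arXiv:1302.6151}, and it goes through here verbatim because the structure of the six auxiliary ideal classes $C_0, \dots, C_5$ and the combinatorics of the $\OO_j$ are of the same shape.
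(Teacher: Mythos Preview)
Your approach is correct and matches the paper's: the paper's proof is the single line ``This is entirely analogous to \cite[Lemma~9.4]{arXiv:1302.6151},'' and what you have written out is precisely that argument (combine Lemma~\ref{lem:A3+A1_passage_to_torsor} with Lemma~\ref{lem:A3+A1_second_summation} and convert the sum over elements $\ee''$ to a sum over ideals via the standard class-group bijection). Two small points: the relevant lemma in \cite{arXiv:1302.6151} is 9.4 rather than 9.3, and the results of \cite[Section~7]{arXiv:1302.6151} are not what is used here --- they enter only in the next step (Lemma~\ref{lem:A3+A1_completion}).
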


\begin{proof}
  This is entirely analogous to \cite[Lemma 9.4]{arXiv:1302.6151}.
\end{proof}

\subsubsection{The remaining summations}
\begin{lemma}\label{lem:A3+A1_completion}
  We have
  \begin{equation*}
    N_{U_1,H}(B) = \left(\frac{2}{\sqrt{|\Delta_K|}}\right)^8 \left(\frac{h_K}{\omega_K}\right)^6 \theta_0V_0(B) + O(B(\log B)^4\log \log B),
  \end{equation*}
  where $\theta_0$ is as in \eqref{eq:def_theta_0} and
  \begin{equation*}
    V_0(B) := \int\limits_{\substack{(\e_1, \ldots, \e_8)\in\mathcal{R}(B)\\\abs{\e_1}, \dots, \abs{\e_7} \ge 1}}\frac{1}{\abs{\e_1}}\dd \e_1 \cdots \dd \e_8,
  \end{equation*}
  with complex variables $\e_1, \ldots, \e_8$.
\end{lemma}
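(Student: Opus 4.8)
## Proof proposal

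The plan is to start from Lemma~\ref{lem:A3+A1_second_summation_ideals}, which already expresses $N_{U_1,H}(B)$ as a sum over six-tuples of nonzero ideals $\eII''$ of $\mathcal A(\theta_8(\eII'',\eI_7),\eI_7)\,V_7(\N\eI_1,\ldots,\N\eI_6;B)$ up to the desired error term $O(B(\log B)^4\log\log B)$. The remaining task is purely to carry out the six outstanding summations over $\N\eI_1,\ldots,\N\eI_6$, which I would do by a direct application of the machinery of \cite[Section~7]{arXiv:1302.6151}, exactly as in \cite[Lemma~9.5]{arXiv:1302.6151} for $S_0$. First I would record that the arithmetic factor $\mathcal A(\theta_8(\eII'',\eI_7),\eI_7)$ is multiplicative in $\eII''$ in the appropriate sense and, via \cite[Lemma~2.2 (and the $\mathcal A$-formalism of Section~5 and~7)]{arXiv:1302.6151}, that performing all seven summations contributes the Euler product $\theta_0$ of \eqref{eq:def_theta_0}: at each prime $\p$ one computes $\sum_{J\subseteq\{1,\ldots,7\}}\theta_{8,\p}(J)(\ldots)$ against the local densities and checks that the resulting local factor equals $(1-1/\N\p)^6(1+6/\N\p+1/\N\p^2)$. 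This is the step where the explicit table of values of $\theta_{8,\p}(J)$ from Lemma~\ref{lem:A3+A1_first_summation} is consumed; it is a finite bookkeeping computation over the listed subsets $J$, entirely analogous to the type~$\Athree$ case.

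Next I would track the constant coming from the repeated summations. Each of the six summations over $\N\eI_i$ replaces a sum by an integral at the cost of a factor $2/\sqrt{|\Delta_K|}$ (the main term of \cite[Proposition~7.?]{arXiv:1302.6151}) together with a factor $h_K/\omega_K$ accounting for the number of ideals in a given class and the unit normalization, and each such step produces an error term that is absorbed into $O(B(\log B)^4\log\log B)$; combined with the two factors $2/\sqrt{|\Delta_K|}$ already present in Lemma~\ref{lem:A3+A1_second_summation_ideals} and with $\theta_0$, this yields the claimed prefactor $\left(2/\sqrt{|\Delta_K|}\right)^8(h_K/\omega_K)^6\theta_0$. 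Simultaneously the integrand $V_7(\N\eI_1,\ldots,\N\eI_6;B)$ turns, after replacing each $\N\eI_i$ by a real variable $t_i=\abs{\e_i}$ and unfolding the definition of $V_7$, into the $8$-fold integral over $(\e_1,\ldots,\e_8)\in\mathcal R(B)$ with $\abs{\e_1},\ldots,\abs{\e_7}\ge1$ of $1/\abs{\e_1}$, which is exactly $V_0(B)$; here one uses that the factor $1/t_1$ in $V_7$ is the source of the $1/\abs{\e_1}$ weight and that the constraint $t_7\ge1$ already present in $V_7$, together with the newly imposed $t_1,\ldots,t_6\ge1$, gives the region $\abs{\e_1},\ldots,\abs{\e_7}\ge1$.

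The error analysis is the routine-but-delicate part. Each application of the Section~7 summation result produces error terms of the shape $B(\log B)^4\log\log B$ (or smaller); one must verify that the growth of $V_7$ and its partial derivatives, or equivalently suitable upper bounds on $V_8$ like those established in the proof of Lemma~\ref{lem:A3+A1_second_summation}, satisfy the hypotheses of \cite[Section~7]{arXiv:1302.6151} so that the error terms at each of the six stages are indeed of this order, and that they do not accumulate to a larger power of $\log B$. Since the structure of the height region $\mathcal R(B)$ and the exponents appearing in \eqref{eq:A3+A1_height_1}--\eqref{eq:A3+A1_height_5} are completely parallel to those in the $\Athree$ case treated in \cite[Section~9]{arXiv:1302.6151}, I expect no new phenomenon here.

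I expect the main obstacle to be the verification that the Euler product assembled from the prime-by-prime table of $\theta_{8,\p}(J)$ indeed collapses to the clean closed form $\theta_0$; everything else is a transcription of the corresponding argument for $S_0$. In summary, the proof is: apply Lemma~\ref{lem:A3+A1_second_summation_ideals}, then invoke \cite[Section~7]{arXiv:1302.6151} six times, identifying the accumulated arithmetic constant with $\left(2/\sqrt{|\Delta_K|}\right)^8(h_K/\omega_K)^6\theta_0$ and the accumulated integral with $V_0(B)$, while checking at each step that the error term stays within $O(B(\log B)^4\log\log B)$ — entirely analogous to \cite[Lemma~9.5]{arXiv:1302.6151}.
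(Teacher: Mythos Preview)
Your proposal is correct and follows the same approach as the paper: apply \cite[Proposition~7.3]{arXiv:1302.6151} (a single application with $r=5$, not six separate iterations) to the output of Lemma~\ref{lem:A3+A1_second_summation_ideals}, then pass to complex variables via polar coordinates as in \cite[Lemma~9.5, Lemma~9.9]{arXiv:1302.6151}. The one concrete ingredient the paper supplies that you leave as a ``verification to be done'' is the bound $V_7(t_1,\ldots,t_6;B)\ll \tfrac{B}{t_1\cdots t_6}\bigl(B/(t_1^2t_2^2t_3^3t_4^2t_6)\bigr)^{-1/3}$, obtained by applying \cite[Lemma~3.4,~(6)]{arXiv:1302.6151} to \eqref{eq:A3+A1_height_5}; this is precisely the hypothesis needed for Proposition~7.3, and once it is in hand the Euler product $\theta_0$ and the constants fall out automatically from that proposition rather than requiring the separate bookkeeping you outline.
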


\begin{proof}
  By \cite[Lemma 3.4, (6)]{arXiv:1302.6151}, applied to the
  \eqref{eq:A3+A1_height_5}, we have
  \begin{equation*}
    V_7(t_1, \ldots, t_6; B) \ll \frac{B^{2/3}}{t_1^{1/3}t_2^{1/3}t_4^{1/3}t_5t_6^{2/3}}
    = \frac{B}{t_1 \cdots
      t_6}\left(\frac{B}{t_1^2t_2^2t_3^3t_4^2t_6}\right)^{-1/3}\text.
  \end{equation*}
  We apply \cite[Proposition 7.3]{arXiv:1302.6151} with $r=5$ and use polar
  coordinates, similarly to \cite[Lemma 9.5, Lemma 9.9]{arXiv:1302.6151}.
\end{proof}

\subsection{Proof of Theorem \ref{thm:main} for $S_1$}
We will use the conditions
\begin{align}
  &\abs{\e_1^2\e_2^2\e_4^2\e_6}\leq B\text{ and }\label{eq:A3+A1_comparison_1}\\
  &\abs{\e_1^2\e_2^2\e_4^2\e_6}\leq B \text{ and
  }\abs{\e_1^{-1}\e_2^{-1}\e_4^2\e_5^{6}\e_6^{4}}\le
  B.\label{eq:A3+A1_comparison_2}
\end{align}
\begin{lemma}\label{lem:A3+A1_predicted_volume}
  Let $\alpha(\tS_1)$, $\omega_\infty(\tS_1)$ be as in Theorem \ref{thm:main},
  let $\mathcal{R}(B)$ be as in
  \eqref{eq:A3+A1_height_1}--\eqref{eq:A3+A1_height_5}, and define
    \begin{equation*}
    V_0'(B) := \int_{\substack{(\e_1, \ldots, \e_8) \in
        \mathcal{R}(B)\\\abs{\e_1}, \abs{\e_2}, \abs{\e_4}, \abs{\e_5}, \abs{\e_6}\geq 1\\\eqref{eq:A3+A1_comparison_2}}} \frac{1}{\abs{\e_1}}
    \dd \e_1 \cdots \dd \e_8,
  \end{equation*}
  with complex variables $\e_1$, $\ldots$, $\e_8$. Then
  \begin{equation}\label{eq:A3+A1_predicted_volume}
    \pi^6\alpha(\tS_1) \omega_\infty(\tS_1) B(\log B)^5 = 4 V_0'(B).
  \end{equation}
\end{lemma}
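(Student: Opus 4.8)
The plan is to evaluate $V_0'(B)$ \emph{exactly} by a change of variables that splits it into the integral defining $\omega_\infty(\tS_1)$ times an elementary integral over a rational polytope, following the pattern of the corresponding computations in \cite[Lemma 9.5, Lemma 9.9]{arXiv:1302.6151}. First I would introduce new complex variables by keeping $\e_1,\e_2,\e_4,\e_5,\e_6$ (the variables carrying the lower bounds $\abs{\e_j}\geq1$ and occurring in \eqref{eq:A3+A1_comparison_2}) and replacing $(\e_3,\e_7,\e_8)$ by $(z_0,z_1,z_2)$, the coordinates obtained by pulling the point $\Psi(\e_1,\dots,\e_9)$ --- with $\e_9$ eliminated through the torsor equation \eqref{eq:A3+A1_torsor} --- back through the parameterization $\psi$ of \eqref{eq:A3+A1_parameterization}. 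Since the components of $\psi$ are monomials of degree $3$, the $z_i$ are explicit rational monomials in $\e_1,\dots,\e_8$, the substitution is birational, and a direct computation with the explicit formulas shows that the complex Jacobian of $(\e_3,\e_7,\e_8)\mapsto(z_0,z_1,z_2)$ is a Laurent monomial in $\e_2,\e_4,\e_5,\e_6$ with no $z$-dependence; combined with the weight $\abs{\e_1}^{-1}$ of $V_0'$ it necessarily equals $\abs{\e_1\e_2\e_4\e_5\e_6}^{-1}$, for the total power of $B$ below to come out as $B^1$.

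Under this substitution, the five height conditions \eqref{eq:A3+A1_height_1}--\eqref{eq:A3+A1_height_5} become exactly $\abs{\psi_i(z_0,z_1,z_2)}\leq B$ for the five polynomials $\psi_i$ appearing in $\omega_\infty(\tS_1)$ --- because each height monomial is anticanonical and hence equals the coordinate $\psi_i(z(\e))$ --- while $\abs{\e_1},\abs{\e_2},\abs{\e_4},\abs{\e_5},\abs{\e_6}\geq1$ and \eqref{eq:A3+A1_comparison_2} turn into conditions on $\e_1,\e_2,\e_4,\e_5,\e_6$ only. Hence the domain of integration is a genuine product: $(z_0,z_1,z_2)$ ranges over $\{\abs{\psi_i(z)}\leq B\}$ independently of $(\e_1,\e_2,\e_4,\e_5,\e_6)$, which range over a region $\mathcal{T}(B)$. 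Because the $\psi_i$ are homogeneous of degree $3$, the dilation $z\mapsto B^{1/6}z$ gives $\int_{\abs{\psi_i(z)}\leq B}\dd z_0\dd z_1\dd z_2 = B\int_{\abs{\psi_i(z)}\leq1}\dd z_0\dd z_1\dd z_2 = \tfrac{\pi}{12}\,\omega_\infty(\tS_1)\cdot B$, and since the Jacobian carries no $z$, this factor comes out of the $z$-integral with no error term.

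It then remains to compute $\int_{\mathcal{T}(B)}\abs{\e_1\e_2\e_4\e_5\e_6}^{-1}\,\dd\e_1\dd\e_2\dd\e_4\dd\e_5\dd\e_6$. Passing to polar coordinates and integrating out the five arguments produces the factor $(2\pi)^5/2^5=\pi^5$, and the remaining integral is $\int\prod_j\dd(\log\abs{\e_j})$ over $\{\abs{\e_j}\geq1\}$ intersected with the two inequalities of \eqref{eq:A3+A1_comparison_2}; in logarithmic coordinates this region is $\{u_j\geq0,\ \ell_1(u)\leq\log B,\ \ell_2(u)\leq\log B\}$ with $\ell_1,\ell_2$ the linear forms from \eqref{eq:A3+A1_comparison_2}, whose volume is $(\log B)^5\cdot\mathrm{vol}(P)$ for the fixed bounded polytope $P=\{v_j\geq0,\ \ell_1(v)\leq1,\ \ell_2(v)\leq1\}$. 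Collecting everything gives $V_0'(B)=\tfrac{\pi^6}{12}\,\omega_\infty(\tS_1)\cdot B(\log B)^5\cdot\mathrm{vol}(P)$ exactly, and the final step is the identification $\mathrm{vol}(P)=3\alpha(\tS_1)=1/2880$: one reads off the effective cone $\mathrm{Eff}(\tS_1)\subset\mathrm{Pic}(\tS_1)\otimes\mathbb{R}$ from the negative curves in Figure~\ref{fig:A3+A1_dynkin}, and checks that in the coordinates dual to $\mathrm{Pic}(\tS_1)$ the polytope $P$ is, after the normalization fixed by the monomial weight, the cross-section $\mathrm{Eff}(\tS_1)^\vee\cap\{\langle\,\cdot\,,-K_{\tS_1}\rangle\leq1\}$ --- the one whose volume is Peyre's $\alpha(\tS_1)=1/8640$ --- rescaled by the factor $3$. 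Then $4V_0'(B)=4\cdot\tfrac{\pi^6}{12}\cdot 3\alpha(\tS_1)\cdot\omega_\infty(\tS_1)B(\log B)^5=\pi^6\alpha(\tS_1)\omega_\infty(\tS_1)B(\log B)^5$, as claimed.

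The hard part will be the change of variables in the first paragraph: one must write down the substitution $(\e_3,\e_7,\e_8)\leftrightarrow(z_0,z_1,z_2)$ explicitly and verify that its Jacobian is the monomial $\abs{\e_2\e_4\e_5\e_6}^{-1}$, with no $z$ present. This $z$-independence is precisely what makes the integral split as a product, hence what makes the identity exact rather than merely asymptotic, and it is the reason the auxiliary conditions \eqref{eq:A3+A1_comparison_2} --- whose sole purpose is to make $\mathcal{T}(B)$ bounded in logarithmic coordinates --- are built into the definition of $V_0'(B)$. A secondary, purely computational, matter is the linear-algebra identification $\mathrm{vol}(P)=3\alpha(\tS_1)$ together with the bookkeeping of the constants $\pi$, the powers of $2$, and the overall factor $4$.
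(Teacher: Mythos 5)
Your proposal is correct and takes essentially the same route as the paper, which simply runs your change of variables in the opposite direction: it substitutes $z_0=l^{-1/3}\e_2\e_8$, $z_1=l^{-1/3}\e_1\e_2\e_4\e_5\e_6\cdot\e_3$, $z_2=l^{-1/3}(-\e_2\e_8-\e_4\e_5^3\e_6^2\e_7)$ with $l=(B\abs{\e_1\e_2\e_4\e_5^3\e_6^2})^{1/2}$ into $\omega_\infty(\tS_1)$, and quotes \cite[Lemma 8.1]{arXiv:1302.6151} for exactly your polar-coordinate/polytope evaluation of the outer integral, including $\mathrm{vol}(P)=3\alpha(\tS_1)$. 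The one point to tighten is the normalization of the $z_i$: for the naive monomial representatives one has $\psi_i(z(\ee))=\pm\e_1\e_2\e_4\e_5^3\e_6^2$ times the corresponding height monomial, so the height conditions do not become $\abs{\psi_i(z)}\le B$ while the Jacobian contributes $\abs{\e_1\e_2\e_4\e_5\e_6}^{-1}$ separately; it is precisely the rescaling by $l^{-1/3}$ that makes both of your claims hold simultaneously (the two discrepancies cancel in any case, so your final bookkeeping is right).
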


\begin{proof}
  We use the following substitutions on $\omega_\infty(\tS_1)$: Let $\e_1$,
  $\e_2$, $\e_4$, $\e_5$, $\e_6 \in \CC\smallsetminus\{0\}$ and $B > 0$. Let
  $\e_3$, $\e_7$, $\e_8$ be complex variables. With $l :=
  (B\abs{\e_1\e_2\e_4\e_5^3\e_6^2})^{1/2}$, we apply the coordinate
  transformation $z_0 = l^{-1/3}\e_2\cdot \e_8$, $z_1 =
  l^{-1/3}\e_1\e_2\e_4\e_5\e_6 \cdot \e_3$, $z_2 =
  l^{-1/3}(-\e_2\cdot\e_8 - \e_4\e_5^3\e_6^2\cdot\e_7)$, of Jacobi
  determinant
  \begin{equation}
    \frac{\abs{\e_1\e_2\e_4\e_5\e_6}}{B}\frac{1}{\abs{\e_1}},
  \end{equation}
  and obtain
  \begin{equation}\label{eq:A3+A1_complex_density_torsor}
    \omega_\infty(\tS_1) = \frac{12}{\pi}\frac{\abs{\e_1\e_2\e_4\e_5\e_6}}{B}
    \int_{(\e_1, \ldots, \e_8) \in \mathcal{R}(B)}\frac{1}{\abs{\e_1}}\dd \e_3 \dd \e_7 \dd \e_8\text.
  \end{equation}

  The negative curves $[E_1], \dots, [E_7]$ generate the effective cone of
  $\tS_1$. We have $[-K_{\tS_1}] = [2E_1+2E_2+3E_3+2E_4+E_6]$ and $[E_7]
  =[E_1+E_2+E_3-2E_5-E_6]$. Hence, \cite[Lemma 8.1]{arXiv:1302.6151} (with the
  roles of $\e_3$ and $\e_6$ exchanged) gives
  \begin{equation}\label{eq:A3+A1_alpha}
     \alpha(\tS_1)(\log B)^5=\frac 1{3 \pi^5}
      \int\limits_{\substack{\abs{\e_1}, \abs{\e_2}, \abs{\e_4}, \abs{\e_5},
          \abs{\e_6} \geq 1\\\eqref{eq:A3+A1_comparison_2}}}
      \frac{\dd \e_1\dd \e_2\dd \e_4\dd \e_5\dd
        \e_6}{\abs{\e_1\e_2\e_4\e_5\e_6}}\text.
  \end{equation}
  The lemma follows by substituting
  \eqref{eq:A3+A1_complex_density_torsor} and \eqref{eq:A3+A1_alpha}
  in \eqref{eq:A3+A1_predicted_volume}.
\end{proof}

To finish our proof, we compare $V_0(B)$ from Lemma
\ref{lem:A3+A1_completion} with $V_0'(B)$ defined in Lemma
\ref{lem:A3+A1_predicted_volume}. Let 
\begin{align*}
  \mathcal{D}_0(B) &:= \{(\e_1, \ldots, \e_8) \in \mathcal{R}(B) \mid \abs{\e_1},\ldots,\abs{\e_7} \geq 1\},\\
  \mathcal{D}_1(B) &:= \{(\e_1, \ldots, \e_8) \in \mathcal{R}(B) \mid \abs{\e_1},\ldots,\abs{\e_7} \geq 1\text{, }\eqref{eq:A3+A1_comparison_1}\},\\
  \mathcal{D}_2(B) &:= \{(\e_1, \ldots, \e_8) \in \mathcal{R}(B) \mid \abs{\e_1},\ldots,\abs{\e_7} \geq 1\text{, }\eqref{eq:A3+A1_comparison_2}\},\\
  \mathcal{D}_3(B) &:= \{(\e_1, \ldots, \e_8) \in \mathcal{R}(B) \mid \abs{\e_1},\ldots,\abs{\e_6} \geq 1\text{, }\eqref{eq:A3+A1_comparison_2}\},\\
  \mathcal{D}_4(B) &:= \{(\e_1, \ldots, \e_8) \in \mathcal{R}(B) \mid
  \abs{\e_1},\!\abs{\e_2},\!\abs{\e_4},\!\abs{\e_5},\!\abs{\e_6} \geq 1\text{,
    }\eqref{eq:A3+A1_comparison_2}\}.
\end{align*}
Moreover, let
\begin{equation*}
  V_i(B) := \int_{\mathcal{D}_i(B)}\frac{\dd \e_1 \cdots \dd \e_8}{\abs{\e_1}}\text.
\end{equation*}
Then $V_0(B)$ is as in Lemma \ref{lem:A3+A1_completion} and
$V_4(B) = V_0'(B)$. We show that, for $1 \leq i \leq 4$, $V_i(B) -
V_{i-1}(B) = O(B(\log B)^4)$. This holds for $i = 1$, since, by
\eqref{eq:A3+A1_height_2} and $\abs{\e_3} \geq 1$, we have $\mathcal{D}_1(B) =
\mathcal{D}_0(B)$.

Moreover, using \cite[Lemma 3.4, (2)]{arXiv:1302.6151} and
(\ref{eq:A3+A1_height_5}) to bound the integral over $\e_8$, we have
\begin{equation*}
  V_2(B) - V_1(B) \ll \int_{\substack{1 \leq \abs{\e_1}, \ldots, \abs{\e_7}
      \leq B\\\abs{\e_1^{-1}\e_2^{-1}\e_4^2\e_5^{6}\e_6^{4}} > B\\ \eqref{eq:A3+A1_height_4_useful}}}\frac{B^{1/2}}{\abs{\e_1\e_2\e_7}^{1/2}}\dd \e_1 \cdots \dd \e_7 \ll B(\log B)^4\text.
\end{equation*}
Moreover,
\begin{equation*}
  V_3(B) - V_2(B) \ll \int\limits_{\substack{\abs{\e_1},\ldots,\abs{\e_6}\geq 1\\\abs{\e_7} <
      1\text{, }\eqref{eq:A3+A1_height_2}\text{, }\eqref{eq:A3+A1_comparison_2}}}\frac{B^{1/2}}{\abs{\e_1\e_2\e_7}^{1/2}}\dd \e_1 \cdots \dd \e_7 \ll B(\log B)^4\text.  
\end{equation*}

Finally, using \cite[Lemma 3.4, (4)]{arXiv:1302.6151} and (\ref{eq:A3+A1_height_5}) to bound the
integral over $\e_7$, $\e_8$, we have
\begin{equation*}
  V_4(B) - V_3(B) \ll\hspace{-1.3cm} \int\limits_{\substack{\abs{\e_1},\abs{\e_2},\abs{\e_4},\abs{\e_5},\abs{\e_6}\geq
      1\\\abs{\e_3} < 1\text{, }\eqref{eq:A3+A1_comparison_1}}}\frac{B^{2/3}}{\abs{\e_1\e_2\e_4\e_5^3\e_6^2}^{1/3}}\dd \e_1 \cdots \dd \e_6 \ll B(\log B)^4\text.
\end{equation*}
Using Lemma~\ref{lem:A3+A1_completion} and
Lemma~\ref{lem:A3+A1_predicted_volume}, this shows Theorem \ref{thm:main} for $S_1$.

\section{The quartic del Pezzo surface of type $\Afour$}

\subsection{Passage to a universal torsor}
We use the notation of \cite{math.AG/0604194}, except that we swap
$\eta_8$ and $\eta_9$.

\begin{figure}[ht]
  \centering
  \[\xymatrix@R=0.05in @C=0.05in{E_9 \ar@{-}[rrrr] \ar@{-}[dd] \ar@{=}[dr] & & & & \li{5} \ar@{-}[dr]\\
    & \li{7} \ar@{-}[r] & \li{6} \ar@{-}[r] & \ex{4} \ar@{-}[r] & \ex{3} \ar@{-}[r] & \ex{2} \ar@{-}[dl]\\
    E_8 \ar@{-}[rrrr] \ar@{-}[ur] & & & & \ex{1}}\]
  \caption{Configuration of curves on $\tS_2$}
  \label{fig:A4_dynkin}
\end{figure}

For any given $\classtuple = (C_0, \dots, C_5) \in \classrep^6$, we
define $u_\classtuple := \N(C_0^3 C_1^{-1}\cdots C_5^{-1})$ and
\begin{align*}
    \OO_1 &:= C_3C_4^{-1} & \OO_2 &:= C_4C_5^{-1} & \OO_3 &:= C_0C_1^{-1}C_3^{-1}C_4^{-1}\\
    \OO_4 &:= C_1C_2^{-1} & \OO_5 &:= C_5 & \OO_6 &:= C_2 \\
    \OO_7 &:= C_0 C_1^{-1} C_2^{-1} & \OO_8 &:= C_0 C_3^{-1} & \OO_9 &:=
    C_0^2 C_3^{-1} C_4^{-1} C_5^{-1}.
\end{align*}
Let
\begin{equation*}
  \OO_{j*} :=
  \begin{cases}
    \OO_j^{\neq 0}, & j \in \{1,\ldots, 7\},\\
    \OO_j, & j \in \{8,9\}.
  \end{cases}
\end{equation*}
For $\eta_j \in \OO_j$, let
\begin{equation*}
  \eI_j := \e_j \OO_j^{-1}\text.
\end{equation*}
For $B \geq 0$, let $\mathcal{R}(B)$ be the set of all $(\e_1, \ldots, \e_8)
\in \CC^8$ with $\e_5 \neq 0$ and
\begin{align}
  \abs{\e_1^2\e_2^4\e_3^3\e_4^2\e_5^3\e_6}&\leq B,\label{eq:A4_height_1}\\
  \abs{\e_1\e_2\e_3\e_4\e_6\e_7\e_8}&\leq B,\label{eq:A4_height_2}\\
  \abs{\e_1^2\e_2^3\e_3^2\e_4\e_5^2\e_8}&\leq B,\label{eq:A4_height_3}\\
  \abs{\e_1\e_2^2\e_3^2\e_4^2\e_5\e_6^2\e_7}&\leq B,\label{eq:A4_height_4}\\
  \abs{\frac{\e_1\e_7\e_8^2 + \e_3\e_4^2\e_6^3\e_7^2}{\e_5}}&\leq B,\label{eq:A4_height_5}
\end{align}
and let $M_\classtuple(B)$ be the set of all
\begin{equation*}
  (\e_1, \ldots, \e_9) \in \OO_{1*} \times \cdots \times \OO_{9*} 
\end{equation*}
that satisfy the \emph{height conditions}
\begin{equation*}\label{eq:A4_height}
  (\e_1, \ldots, \e_8) \in \mathcal{R}(u_\classtuple B)\text,
\end{equation*}
the \emph{torsor equation}
\begin{equation}\label{eq:A4_torsor}
  \e_3\e_4^2\e_6^3\e_7 + \e_1\e_8^2 + \e_5\e_9 = 0,
\end{equation}
and the \emph{coprimality conditions}
\begin{equation}\label{eq:A4_coprimality}
  \eI_j + \eI_k = \OO_K \text{ for all distinct nonadjacent vertices $E_j$, $E_k$ in Figure~\ref{fig:A4_dynkin}.}
\end{equation}

\begin{lemma}\label{lem:A4_passage_to_torsor} We have
  \begin{equation*}
    N_{U_2,H}(B) = \frac{1}{\omega_K^6}\sum_{\classtuple \in \classrep^6}|M_\classtuple(B)|\text.
  \end{equation*}
\end{lemma}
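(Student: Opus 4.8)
The plan is to follow exactly the same strategy that produced Lemma~\ref{lem:A3+A1_passage_to_torsor} for $S_1$, which was itself a specialization of \cite[Claim~4.1]{arXiv:1302.6151}: the statement asserts that the count $N_{U_2,H}(B)$ of anticanonical-height-bounded points on the open subset $U_2 \subset S_2$ equals $\omega_K^{-6}\sum_{\classtuple}|M_\classtuple(B)|$, where $M_\classtuple(B)$ packages the integral points on the universal torsor subject to the height conditions \eqref{eq:A4_height_1}--\eqref{eq:A4_height_5}, the torsor equation \eqref{eq:A4_torsor}, and the coprimality conditions \eqref{eq:A4_coprimality}. So the first step is to invoke \cite[Claim~4.1]{arXiv:1302.6151} and observe that our setup is an instance of it, reducing the problem to checking that the combinatorial data (the $\OO_j$, the torsor equation, the height polynomials, the Dynkin-type graph of Figure~\ref{fig:A4_dynkin}) are correctly matched to the general machinery.

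Second, I would exhibit the explicit parameterization. The minimal desingularization $\pi\colon\tS_2\to S_2$ is obtained by a sequence of blow-ups of $\PP^2_K$; I need to identify the initial four curves (the analogues of $E_8^{(0)},E_3^{(0)},E_9^{(0)},E_7^{(0)}$ in the $S_1$ case, chosen from the configuration in Figure~\ref{fig:A4_dynkin}) and the five successive blow-up centres producing $E_1,\dots,E_5$ (or whichever five exceptional curves are introduced), so that the resulting $\tS_2$ has the configuration of Figure~\ref{fig:A4_dynkin}. Then I would write down the birational map $\psi\colon\PP^2_K\rto S_2$ inverse to a suitable linear projection $S_2\rto\PP^2_K$ (here the natural choice is $(x_0:\dots:x_4)\mapsto(x_?:x_?:x_?)$ picking three of the coordinates that restrict to a base-point-free pencil pair), with the five coordinate entries of $\psi$ being the monomials in $y_0,y_1,y_2$ dictated by the strict transforms of the coordinate hyperplane sections. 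Matching this with the map $\Psi$ from \cite[Claim~4.2]{arXiv:1302.6151} sending $(\e_1,\dots,\e_9)$ to the five monomials $\e_1^2\e_2^4\e_3^3\e_4^2\e_5^3\e_6$, $\e_1\e_2\e_3\e_4\e_6\e_7\e_8$, $\e_1^2\e_2^3\e_3^2\e_4\e_5^2\e_8$, $\e_1\e_2^2\e_3^2\e_4^2\e_5\e_6^2\e_7$, and $\e_1\e_3\e_4^2\e_6^3\e_7^2 + \e_1\e_7\e_8^2 \cdot(\text{or rather the entry giving }x_4)$ — i.e.\ the five monomials whose absolute values appear in \eqref{eq:A4_height_1}--\eqref{eq:A4_height_5} — verifies that the height conditions $H(\psi(\e))\le B$ translate to $(\e_1,\dots,\e_8)\in\mathcal{R}(u_\classtuple B)$ and that the torsor equation \eqref{eq:A4_torsor} encodes the defining equation $x_0x_4+x_1x_2+x_3^2=0$ of $S_2$ pulled back. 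Once these identifications are in place, the lemma follows verbatim from \cite[Lemma~9.1]{arXiv:1302.6151} in the same way that Lemma~\ref{lem:A3+A1_passage_to_torsor} did; the proof can therefore be written as: \emph{this is a specialization of \cite[Claim~4.1]{arXiv:1302.6151}; with the explicit parameterization $\psi$ and the map $\Psi$ from \cite[Claim~4.2]{arXiv:1302.6151}, we proceed exactly as in the proof of \cite[Lemma~9.1]{arXiv:1302.6151}.}

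The one genuine subtlety — and the step I expect to be the main obstacle — is that the paper's introduction flags that for $S_2$ ``parts of the first step need to be treated individually.'' Concretely, the general results of \cite[Section~4]{arXiv:1302.6151} may not directly cover this case because $\e_8$ enters the torsor equation \eqref{eq:A4_torsor} quadratically (via $\e_1\e_8^2$), so the bijection between rational points and torsor points, and in particular the bookkeeping of units and the coprimality/Cox-ring conditions, has to be verified by hand rather than quoted. I would therefore spend the bulk of the argument checking that the Cox ring relation $\e_3\e_4^2\e_6^3\e_7 + \e_1\e_8^2 + \e_5\e_9$ of $\tS_2$ \cite{math.AG/0604194} matches \eqref{eq:A4_torsor}, that the grading by $\mathrm{Pic}(\tS_2)$ recovers the ideals $\OO_j$ as listed (with the stated swap of $\eta_8$ and $\eta_9$ relative to \cite{math.AG/0604194}), and that the nonadjacency coprimality conditions \eqref{eq:A4_coprimality} are precisely the conditions for a torsor point to come from a point of $U_2$ rather than a point on one of the lines; the factor $\omega_K^{-6} = |\OO_K^\times|^{-6}$ then arises from the $(\OO_K^\times)^{\rho}$-action on the torsor with $\rho = 6$. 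Modulo this case-specific verification, everything else is a transcription of the $S_1$ argument, so the written proof will be short, pointing to \cite[Claim~4.1, Claim~4.2]{arXiv:1302.6151} and \cite[Lemma~9.1]{arXiv:1302.6151}, with the explicit blow-up sequence and parameterization $\psi$ supplied as the new input.
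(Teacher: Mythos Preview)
Your overall strategy is right and matches the paper's: invoke \cite[Claim~4.1]{arXiv:1302.6151}, supply the explicit blow-up sequence and the maps $\psi$ and $\Psi$, and then argue as in \cite[Lemma~9.1]{arXiv:1302.6151}. The five monomials you list for $\Psi$ are essentially correct (the fifth entry is simply $\e_7\e_9$), and the projection is $(x_0:\dots:x_4)\mapsto(x_0:x_2:x_3)$.

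However, you have misdiagnosed the ``individual treatment'' flagged in the introduction. The quadratic appearance of $\e_8$ in the torsor equation is \emph{not} the obstruction in the passage-to-torsor step; that causes complications only later, in the second summation. The genuine issue here is that two of the five blow-up centres lie on \emph{three} of the tracked curves simultaneously (namely $E_3^{(0)}\cap E_8^{(0)}\cap E_9^{(0)}$ and then $E_1^{(1)}\cap E_3^{(1)}\cap E_9^{(1)}$). For such triple-intersection blow-ups, \cite[Lemma~4.4]{arXiv:1302.6151} does not apply directly; one is instead in the situation of \cite[Remark~4.5]{arXiv:1302.6151}, and the coprimality conditions among the resulting ideals $I_j''$ must be verified by hand using the torsor equation at that stage. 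Concretely, after step~(1) one needs $I_3''+I_8''=\OO_K$ and $I_1''+I_8''+I_9''=\OO_K$, and after step~(2) the pairwise coprimality of $I_1'', I_3'', I_9''$; each of these is deduced by combining the construction $I_\bullet''+I_\bullet''+I_\bullet''=\OO_K$ with the current form of the torsor relation and previously established coprimalities. Your proposal's plan to ``check that the nonadjacency coprimality conditions are precisely the conditions for a torsor point to come from $U_2$'' is too vague to count as this argument: without the specific Remark~4.5-style deductions at steps~(1) and~(2), the inductive proof of \cite[Claim~4.2]{arXiv:1302.6151} does not go through. Steps~(3), (4), (5) are ordinary double-point blow-ups and are covered by \cite[Lemma~4.4]{arXiv:1302.6151} as you expect.
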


\begin{proof}
  This is a specialization of \cite[Claim 4.1]{arXiv:1302.6151} and we
  prove it using the strategy from \cite[Section~4]{arXiv:1302.6151} with the
  data supplied in \cite{math.AG/0604194}. Starting
  with the curves $E_3^{(0)}:= \{y_0=0\}$, $E_8^{(0)} := \{y_1=0\}$, $E_7^{(0)} := \{y_2 = 0\}$, $E_9^{(0)} :=
  \{-y_0y_2 - y_1^2 = 0\}$ in  $\PP^2_K$, we prove \cite[Claim
  4.2]{arXiv:1302.6151} for the following sequence of blow-ups:
  \begin{enumerate}
  \item blow up $E_3^{(0)}\cap E_8^{(0)} \cap E_9^{(0)}$, giving $E_1^{(1)}$,
  \item blow up $E_1^{(1)}\cap E_3^{(1)} \cap E_9^{(1)}$, giving $E_2^{(2)}$,
  \item blow up $E_2^{(2)} \cap E_9^{(2)}$, giving $E_5^{(3)}$,
  \item blow up $E_3^{(3)} \cap E_7^{(3)}$, giving $E_4^{(4)}$,
  \item blow up $E_4^{(4)} \cap E_7^{(4)}$, giving $E_6^{(5)}$.
  \end{enumerate}

  The inverse $\pi \circ \rho^{-1} : \PP^2_K \rto S_2$ of the
  projection $\phi = \rho \circ \pi^{-1} : S_2 \rto \PP^2_K$, $(x_0 : \cdots :
  x_4) \mapsto (x_0 : x_2 : x_3)$ given by
  \begin{equation}\label{eq:A4_parameterization}
    (y_0 : y_1 : y_2) \mapsto (y_0^3 : y_0y_1y_2 : y_0^2y_1 : y_0^2y_2 : -y_2(y_1^2 + y_0y_2)),
  \end{equation}  
  and the map $\Psi$ appearing in \cite[Claim 4.2]{arXiv:1302.6151} sends $(\e_1,
  \ldots, \e_9)$ to
  \begin{equation*}
   (\e_1^2\e_2^4\e_3^3\e_4^2\e_5^3\e_6, \e_1\e_2\e_3\e_4\e_6\e_7\e_8, \e_1^2\e_2^3\e_3^2\e_4\e_5^2\e_8, \e_1\e_2^2\e_3^2\e_4^2\e_5\e_6^2\e_7, \e_7\e_9).
  \end{equation*}
  As in the proof of \cite[Lemma 9.1]{arXiv:1302.6151}, we see that the
  hypotheses of \cite[Lemma 4.3]{arXiv:1302.6151} are satisfied, so \cite[Claim
  4.2]{arXiv:1302.6151} holds in our situation for $i=0$.

  Note that \cite[Lemma~4.4]{arXiv:1302.6151} applies in steps (3), (4), (5) of
  the above chain of blow-ups. In steps (1), (2), we are in the situation of
  \cite[Remark~4.5]{arXiv:1302.6151}, so that we must derive some coprimality
  conditions using the torsor equation. We use the notation of \cite[Lemma~4.4,
  Remark~4.5]{arXiv:1302.6151}.

  For (1), we start with the parameterization provided by
  \cite[Lemma~4.3]{arXiv:1302.6151}, consisting of $(\e_3',\e_7',\e_8',\e_9')$
  satisfying certain coprimality conditions and other conditions. Since $\e_3'
  \ne 0$, there is a unique $C_1 \in \classrep$ such that
  $[I_3'+I_8'+I_9']=[C_1^{-1}]$. We choose $\e_1'' \in C_1$ such that
  $I_1''=I_3'+I_8'+I_9'$; this is unique up to multiplication by
  $\OO_K^\times$. We define $\e_3'':=\e_3'/\e_1''$, $\e_8'':=\e_8'/\e_1''$,
  $\e_9'':=\e_9'/\e_1''$ and $\e_7'':=\e_7'$. To show that $(\e_1'', \e_3'',
  \e_7'', \e_8'', \e_9'')$ lies in the set described in
  \cite[Claim~4.2]{arXiv:1302.6151} for $i=1$, everything is provided by the
  proof of \cite[Lemma~4.4]{arXiv:1302.6151} except the coprimality conditions
  involving $\e_1'', \e_3'', \e_8'', \e_9''$. Considering the configuration of
  $E_1^{(1)}, E_3^{(1)}, E_8^{(1)}, E_9^{(1)}$, these are
  $\eI_3''+\eI_8''=\OO_K$ (which holds because $I_3''+I_8''+I_9''=\OO_K$ by
  construction and because of the relation
  $\e_3''\e_7''+\e_1''\e_8''^2+\e_9''=0$) and $\eI_1''+\eI_8''+\eI_9''=\OO_K$
  (which holds because otherwise the relation would give non-triviality of
  $\eI_1''+\eI_8''+\eI_9''+\eI_3''\eI_7''$ contradicting the previous
  condition or the condition $\eI_1''+\eI_7''=\OO_K$ provided by the proof of
  \cite[Lemma~4.4]{arXiv:1302.6151}).

  For (2), we replace $''$ by $'$ in the result of the previous step. We
  choose $C_2 \in \classrep$ such that $[I_1'+I_3'+I_9']=[C_2^{-1}]$ and
  $\e_2'' \in C_4 = \OO_2''$ such that $I_2''=I_1'+I_3'+I_9'$. It remains to
  check the pairwise coprimality of $I_1'', I_3'', I_9''$. By construction,
  $I_1''+I_3''+I_9''=\OO_K$; considering the torsor equation
  $\e_3''\e_7''+\e_1''\e_8''^2+\e_9''=0$ shows $I_1''+I_3''=\OO_K$ directly,
  $I_1''+I_9''=\OO_K$ using $I_1''+I_7''=\OO_K$, and $I_3''+I_9''=\OO_K$ using
  $I_3''+I_8''=\OO_K$.

  Since steps (3), (4), (5) are covered by \cite[Lemma 4.4]{arXiv:1302.6151},
  this shows \cite[Claim 4.2]{arXiv:1302.6151}. We deduce \cite[Claim
  4.1]{arXiv:1302.6151} in the same way as in \cite[Lemma 9.1]{arXiv:1302.6151}.
\end{proof}

\subsection{Summations}
\subsubsection{The first summation over $\e_8$ with dependent $\e_9$}
Let $\ee' := (\e_1, \ldots, \e_7)$ and $\eII' := (\eI_1, \ldots, \eI_7)$. Let
$\theta_0(\eII') := \prod_\p\theta_{0,\p}(J_\p(\eII'))$, where $J_\p(\eII') :=
\{j \in \{1, \ldots, 7\}\ :\  \p \mid \eI_j\}$ and
\begin{equation*}
  \theta_{0,\p}(J):=
  \begin{cases}
    1 &\text{ if } J = \emptyset, \{1\}, \{2\}, \{3\}, \{4\}, \{5\}, \{6\},
    \{7\},\\
    \ &\text{ or } J= \{1,2\}, \{2,3\}, \{2,5\}, \{3,4\}, \{4,6\}, \{6,7\},\\
    0 &\text{ otherwise.} 
  \end{cases}
\end{equation*}
Then $\theta_{0}(\eII') = 1$ if and only if $\eI_1$, $\ldots$, $\eI_7$
satisfy the coprimality conditions from \eqref{eq:A4_coprimality}, and
$\theta_{0}(\eII') = 0$ otherwise.

We apply \cite[Proposition 5.3]{arXiv:1302.6151} with $(A_1, A_2, A_3, A_0) :=
(3,4,6,7)$, $(B_1, B_0) := (1, 8)$, $(C_1, C_0):=(5,9)$, and $D := 2$. For
given $\e_2$, $\e_5$, we write
\begin{equation*}
  \e_3\e_4^2\e_6^3\e_7 = \e_{A_0}^{a_0}\Pi(\eeA) = \Ao \At^2,
\end{equation*}
where $\Ao$, $\At$ are chosen as follows: Let $\id A = \id A(\e_2, \e_5)$ be a prime ideal not
dividing $\eI_2\eI_5$ such that $\id A\OO_6^{-1} \OO_8 = \id A C_0C_2^{-1}C_3^{-1}$
is a principal fractional ideal $t\OO_K$, for a suitable $t = t(\e_2, \e_5) \in
K^\times$. Then we define $\At = \At(\e_2,\e_5) := \e_6t$ and $\Ao := \Ao(\e_2, \e_5)
:= \e_3\e_4^2\e_6\e_7t^{-2}$.

\begin{lemma}\label{lem:A4_first_summation}
  We have
  \begin{equation*}
    |M_\classtuple(B)| = \frac{2}{\sqrt{|\Delta_K|}}\sum_{\ee' \in \OO_{1*} \times \dots \times \OO_{7*}} \theta_8(\ee', \classtuple)V_8(\N\eI_1,\ldots,\N\eI_7; B) + O_\classtuple(B(\log B)^3),
  \end{equation*}
  where
  \begin{equation*}
    V_8(t_1, \ldots, t_7; B) :=  \frac{1}{t_5}\int_{(\sqrt{t_1}, \ldots,
      \sqrt{t_7},\e_8) \in \mathcal{R}(B)} \dd \e_8.
  \end{equation*}
  Moreover,
  \begin{equation*}
    \theta_8(\ee', \classtuple) := \sum_{\substack{\kc \mid \eI_2\\\kc + 
        \eI_1\eI_3 =
        \OO_K}}\frac{\mu_K(\kc)}{\N\kc}\tilde\theta_8(\eII',\kc)
    \sum_{\substack{\rho \mod \kc\eI_5\\\rho\OO_K + \kc\eI_5 = \OO_K\\\rho^2 \equiv_{\kc\eI_5} 
        \e_6\e_7 A}}1\text,
  \end{equation*}
  with
  \begin{equation*}
  \tilde\theta_8(\eII',\kc) := \theta_0(\eII')\frac{\phi_K^*(\eI_2\eI_3\eI_4\eI_6)}{\phi_K^*(\eI_2+
      \kc\eI_5)}\text.
  \end{equation*}
  Here, $A := -\e_3\e_4^2/(t(\e_2,\e_5)^2\e_1)$, and $\e_6\e_7 A$ is invertible
  modulo $\kc\eI_5$ whenever $\theta_0(\eII')\neq 0$.
\end{lemma}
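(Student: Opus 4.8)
The plan is to apply \cite[Proposition 5.3]{arXiv:1302.6151} exactly as announced, with the summation parameters $(A_1, A_2, A_3, A_0) := (3,4,6,7)$, $(B_1, B_0) := (1, 8)$, $(C_1, C_0):=(5,9)$, $D := 2$, and $u_\classtuple B$ in place of $B$, and then to identify the resulting main and error terms. First I would check the hypotheses of that proposition: by \cite[Lemma 3.2]{arXiv:1302.6151} the fibre set $\mathcal{R}(\ee', u_\classtuple B)$ of admissible $\e_8$ has bounded class, and by \cite[Lemma 3.4, (1)]{arXiv:1302.6151} applied to the height condition \eqref{eq:A4_height_5}, which bounds $|\e_1\e_7\e_8^2/\e_5|$ up to lower-order terms, this set lies in a bounded union of balls of radius $\ll_\classtuple (B\,\N(\eI_1\eI_7\eI_5^{-1}))^{1/4}$. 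The split $\e_3\e_4^2\e_6^3\e_7 = \Ao\At^2$ with $\Ao = \e_3\e_4^2\e_6\e_7 t^{-2}$, $\At = \e_6 t$ is exactly the factorization into a ``square-free in $\e_6$'' part and a square part that the proposition requires, with the auxiliary principalizing prime $\id A = \id A(\e_2, \e_5)$ playing the role described in \cite[Remark 5.2]{arXiv:1302.6151}; I would note that $\id A \nmid \eI_2\eI_5$ guarantees the congruence $\rho^2 \equiv_{\kc\eI_5} \e_6\e_7 A$ makes sense and that $\e_6\e_7 A$ is a unit mod $\kc\eI_5$ when $\theta_0(\eII') \ne 0$, since then $\p \nmid \eI_1\eI_5\eI_6\eI_7$ forces invertibility.

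Next I would extract the main term. The proposition produces a main term of the shape $\frac{2}{\sqrt{|\Delta_K|}}\sum_{\ee'} (\text{local factors})\cdot V_8$, where $V_8(t_1,\dots,t_7;B) = \frac{1}{t_5}\int_{(\sqrt{t_1},\dots,\sqrt{t_7},\e_8)\in\mathcal{R}(B)}\dd\e_8$ comes from the volume of the fibre in $\e_8$; the factor $1/t_5 = 1/\N\eI_5$ reflects that $\e_5 \in \OO_{5*}$ appears in the denominator of \eqref{eq:A4_height_5} and in the torsor equation \eqref{eq:A4_torsor} as the coefficient of $\e_9$. The arithmetic factor unwinds, exactly as in \cite[Lemma 9.2]{arXiv:1302.6151}, into the Möbius sum over $\kc \mid \eI_2$ with $\kc + \eI_1\eI_3 = \OO_K$ weighted by $\mu_K(\kc)/\N\kc$, the Euler-product-type correction $\tilde\theta_8(\eII',\kc) = \theta_0(\eII')\,\phi_K^*(\eI_2\eI_3\eI_4\eI_6)/\phi_K^*(\eI_2+\kc\eI_5)$ recording the coprimality conditions from \eqref{eq:A4_coprimality}, and the congruence count $\sum_{\rho} 1$ over $\rho \bmod \kc\eI_5$ with $\rho\OO_K + \kc\eI_5 = \OO_K$ and $\rho^2 \equiv_{\kc\eI_5} \e_6\e_7 A$ coming from the fact that $\e_8$ appears quadratically (via $\e_1\e_8^2$) in \eqref{eq:A4_torsor}. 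This is precisely $\theta_8(\ee', \classtuple)$ as stated.

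Finally I would bound the error term. The proposition's error is $\ll \sum_{\ee',\,\eqref{eq:A4_height_4}} 2^{\omega_K(\cdots)}\big(R(\ee';u_\classtuple B)/\N(\eI_5)^{1/2} + 1\big)$, where the relevant height condition restricting $\ee'$ is \eqref{eq:A4_height_4}, i.e.\ $\N(\eI_1\eI_2^2\eI_3^2\eI_4^2\eI_5\eI_6^2\eI_7) \le B$. Since $|\OO_K^\times| < \infty$ I can replace the sum over $\e_j$ by a sum over the ideals $\eI_j$. Inserting $R(\ee';u_\classtuple B) \ll_\classtuple (B\,\N(\eI_1\eI_7\eI_5^{-1}))^{1/4}$ gives two pieces; in each, summing geometrically over the variables appearing with exponent $\ge 1$ in the denominator (namely $\eI_5$, and after that $\eI_2, \eI_3, \eI_4, \eI_6$) leaves at most three ``free'' divisor sums of size $O(\log B)$ each after accounting for the factor $2^{\omega_K}$ via a standard divisor bound $\sum_{\N\eI \le B} 2^{\omega_K(\eI)}/\N\eI \ll (\log B)^2$. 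The worst term is $O(B(\log B)^3)$, which matches the claimed error. The main obstacle is purely bookkeeping: correctly tracking which exponents in the relevant height condition exceed $1$ so that the geometric sums converge and only the advertised power of $\log B$ survives; the congruence-count manipulation in the main term is delicate but is a direct transcription of \cite[Lemma 9.2]{arXiv:1302.6151} once the roles of the variables are matched up.
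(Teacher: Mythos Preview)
Your overall approach matches the paper's exactly: apply \cite[Proposition~5.3]{arXiv:1302.6151} with the stated parameters, identify the main term as in \cite[Lemma~9.2]{arXiv:1302.6151}, and bound the error using the height condition~\eqref{eq:A4_height_4}. However, your radius bound is inverted. From~\eqref{eq:A4_height_5} one has (recall $\abs{x}=|x|^2$) roughly $|\e_1|^2|\e_7|^2|\e_8|^4/|\e_5|^2\le B$, hence
\[
R(\ee';u_\classtuple B)\ \ll_\classtuple\ \bigl(B\,\N\eI_5\,\N\eI_1^{-1}\,\N\eI_7^{-1}\bigr)^{1/4},
\]
not $(B\,\N(\eI_1\eI_7\eI_5^{-1}))^{1/4}$ as you wrote. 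This is not a harmless slip: with your inverted bound the term $R/\N\eI_5^{1/2}$ carries a factor $\N\eI_7^{1/4}$ in the \emph{numerator}, and summing over $\eI_7$ subject to~\eqref{eq:A4_height_4} then produces a contribution of size $B^{3/2}$ rather than $O(B(\log B)^3)$. With the correct bound one obtains, after summing $\eI_7$, terms of the form $B/(\N\eI_1\N\eI_2^{3/2}\N\eI_3^{3/2}\N\eI_4^{3/2}\N\eI_5\N\eI_6^{3/2})$ and $B/(\N\eI_1\N\eI_2^{2}\N\eI_3^{2}\N\eI_4^{2}\N\eI_5\N\eI_6^{2})$, whose sums over $\eI_1,\dots,\eI_6$ are indeed $O(B(\log B)^3)$. (Relatedly, since $\e_8$ appears quadratically in~\eqref{eq:A4_height_5}, the relevant reference is \cite[Lemma~3.5,(1)]{arXiv:1302.6151}, not Lemma~3.4.)

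A second, smaller gap: your treatment of the $2^{\omega_K(\cdots)}$ weight is too vague. In this case the proposition outputs the specific weight $2^{\omega_K(\eI_2)+\omega_K(\eI_2\eI_3\eI_4\eI_6)+\omega_K(\eI_2\eI_5)}$, and it is the fact that none of $\eI_1,\eI_5,\eI_7$ carries a divisor weight that makes the sums over those three ideals contribute only $(\log B)^3$ in total; your sketch ``at most three free divisor sums'' does not actually track which variables are weighted, so the bookkeeping you flag as the main obstacle is not yet done.
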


\begin{proof}
  It is clear that $\theta_8(\ee', \classtuple) = \theta_1(\ee')$ from \cite[Proposition
  5.3]{arXiv:1302.6151}, and a simple argument as in the proof of \cite[Lemma
  9.2]{arXiv:1302.6151} shows that $V_8(\N\eI_1, \ldots, \N\eI_7; B) =
  V_1(\ee', u_\classtuple B)$. Hence, the main term is correct and it remains to
  bound the error term arising from \cite[Proposition
  5.3]{arXiv:1302.6151}.

  Similarly as in \cite[Lemma 9.2]{arXiv:1302.6151}, we see that the set
  $\mathcal{R}(\ee', B)$ of all $\e_8$ with $(\e_1, \ldots, \e_8) \in
  \mathcal{R}(u_\classtuple B)$ is of bounded class and (using \cite[Lemma 3.5,
  (1)]{arXiv:1302.6151} on \eqref{eq:A4_height_5}) contained in two balls of
  radius $R(\ee'; u_\classtuple B) \ll_\classtuple
  \left(B\N\eI_5\N\eI_1^{-1}\N\eI_7^{-1}\right)^{1/4}$.
  
  The error term is
  \begin{equation*}
    \ll \sum_{\ee', \eqref{eq:A4_fourth_height_cond_ideals}}2^{\omega_K(\eI_2)+\omega_K(\eI_2\eI_3\eI_4\eI_6)+\omega_K(\eI_2\eI_5)}\left(\frac{R(\ee'; u_\classtuple B)}{\N(\eI_5)^{1/2}}+1\right)\text,
  \end{equation*}
  where, using \eqref{eq:A4_height_4}, the sum runs over all $\ee' \in \OO_{1*}
  \times \cdots \times \OO_{7*}$ with
  \begin{equation}\label{eq:A4_fourth_height_cond_ideals}
    \N(\eI_1\eI_2^2\eI_3^2\eI_4^2\eI_5\eI_6^2\eI_7) \leq B\text.
  \end{equation}
  Since $|\OO_K^\times| < \infty$, we can sum over the $I_j$ instead of the
  $\e_j$, which then run over all nonzero ideals of $\OO_K$ with
  (\ref{eq:A4_fourth_height_cond_ideals}), and obtain
  \begin{align*}
    &\ll_\classtuple \sum_{\eII'\text{, }\eqref{eq:A4_fourth_height_cond_ideals}}2^{\omega_K(\eI_2)+\omega_K(\eI_2\eI_3\eI_4\eI_6)+\omega_K(\eI_2\eI_5)}\left(\frac{B^{1/4}}{\N\eI_1^{1/4}\N\eI_5^{1/4}\N\eI_7^{1/4}}+1\right)\\
    &\ll\sum_{\eI_1, \dots, \eI_6}\left(\frac{2^{\omega_K(\eI_2)+\omega_K(\eI_2\eI_3\eI_4\eI_6)+\omega_K(\eI_2\eI_5)}B}{\N\eI_1\N\eI_2^{3/2}\N\eI_3^{3/2}\N\eI_4^{3/2}\N\eI_5\N\eI_6^{3/2}}+\frac{2^{\omega_K(\eI_2)+\omega_K(\eI_2\eI_3\eI_4\eI_6)+\omega_K(\eI_2\eI_5)}B}{\N\eI_1\N\eI_2^2\N\eI_3^2\N\eI_4^2\N\eI_5\N\eI_6^2}\right)\\
    &\ll B(\log B)^3 + B(\log B)^3 \ll B(\log B)^3.\qedhere
  \end{align*}
\end{proof}

For the further summations, we define
\begin{equation*}
  \theta_8'(\eII') := \sum_{\substack{\kc \mid \eI_2\\\kc + 
      \eI_1\eI_3 =
      \OO_K}}\frac{\mu_K(\kc)}{\N\kc}\tilde\theta_8(\eII',\kc)
\end{equation*}
and distinguish between two cases:
Similarly to \cite{MR2543667}, let $M^{(86)}_\classtuple(B)$ be the main
term in Lemma \ref{lem:A4_first_summation} with the additional
condition $\N\eI_6 > \N\eI_7$ on the $\ee'$, and let
$M^{(87)}_\classtuple(B)$ be the main term with the additional condition
$\N\eI_6 \leq \N\eI_7$.
Moreover, we define
\begin{equation*}
  N_{86}(B) :=\frac{1}{\omega_K^{6}}\sum_{\classtuple \in \classrep^6}M^{(86)}_\classtuple(B)
\end{equation*}
and $N_{87}(B)$ analogously, so
\begin{equation}\label{eq:A4_split_summation}
  N_{U_2, H}(B) = N_{86}(B) + N_{87}(B) + O(B(\log B)^3)\text.
\end{equation}

\subsubsection{The second summation over $\e_6$ in $M^{(86)}_\classtuple(B)$}
\begin{lemma}\label{lem:A4_second_summation_Ma}
  Write $\ee'' := (\e_1, \dots, \e_5, \e_7)$ and $\OO'':=\OO_{1*}\times \cdots
  \times \OO_{5*}\times\OO_{7*}$. We have
  \begin{align*}
    M^{(86)}_\classtuple(B) &=\left(\frac{2}{\sqrt{|\Delta_K|}}\right)^2 \sum_{\ee''\in \OO''} \mathcal{A}(\theta_8'(\eII'),\eI_6)V_{86}(\N\eI_1,\ldots,\N\eI_5,\N\eI_7; B)\\ &+ O_\classtuple(B(\log B)^4),
  \end{align*}
  where, for $t_1, \ldots, t_5, t_7 \geq 1$,
  \begin{equation*}
    V_{86}(t_1, \ldots, t_5, t_7; B) := \frac{\pi}{t_5}\int\limits_{\substack{(\sqrt{t_1},
        \ldots, \sqrt{t_7}, \e_8) \in \mathcal{R}(B)\\t_6>t_7}} \dd t_6 \dd \e_8\text,
  \end{equation*}
  with a real variable $t_6$ and a complex variable $\e_8$.
\end{lemma}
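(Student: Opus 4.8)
The plan is to fix $\ee'' \in \OO''$ and carry out the summation over $\e_6 \in \OO_{6*}$ subject to $\N\eI_6 > \N\eI_7$ in the main term provided by Lemma~\ref{lem:A4_first_summation}, following the strategy of \cite[Section~6]{arXiv:1302.6151} as in the proof of Lemma~\ref{lem:A3+A1_second_summation} and in the second summation for $S_0$ in \cite{arXiv:1302.6151}. Writing $g(t_6) := V_8(\N\eI_1, \ldots, \N\eI_5, t_6, \N\eI_7; B)$ for the smooth weight (a function of $t_6 > \N\eI_7$) and $\e_6 \mapsto \theta_8(\ee', \classtuple)$ for the arithmetic weight, that machinery produces $(2/\sqrt{|\Delta_K|})^2$ times a sum over $\ee''$ of the mean value of the $\e_6$-weight times $\int_{t_6 > \N\eI_7} g(t_6) \dd t_6$. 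Inserting the definition of $V_8$, the $t_6$-integral together with the $\e_8$-integral and the factor $1/\N\eI_5$ reassemble into $V_{86}(\N\eI_1, \ldots, \N\eI_5, \N\eI_7; B)$, the $\pi$ in $V_{86}$ being absorbed from the mean-value constant $2\pi/\sqrt{|\Delta_K|}$. So the content of the lemma is the claim that this mean value equals $\mathcal{A}(\theta_8'(\eII'), \eI_6)$.

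The one feature absent from the $S_0$ and $S_1$ cases is that $\theta_8(\ee', \classtuple)$ is not a function of the ideal $\eI_6$ alone: through the inner sum over $\rho$ it also depends on the residue of $\e_6$ modulo $\kc\eI_5$. I would handle this as in the corresponding step over $\QQ$ in \cite{MR2543667}. First I would interchange the finite sum over $\kc \mid \eI_2$ with $\kc + \eI_1\eI_3 = \OO_K$ to the outside; since $\eI_2$ is fixed and $\mu_K(\kc)/\N\kc$ decays, this only costs a bounded factor in the error. For fixed $\kc$ the $\e_6$-weight is $\tilde\theta_8(\eII', \kc) \cdot N_{\kc}(\e_6)$, where $N_{\kc}(\e_6) := \#\{\rho \bmod \kc\eI_5 : \rho\OO_K + \kc\eI_5 = \OO_K,\ \rho^2 \equiv_{\kc\eI_5} \e_6\e_7 A\}$; this factor depends on $\e_6$ only modulo $\kc\eI_5$ and vanishes unless $\e_6$ is invertible there. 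Since $E_6$ is nonadjacent to both $E_2$ and $E_5$ in Figure~\ref{fig:A4_dynkin}, the coprimality conditions $\eI_2 + \eI_6 = \eI_5 + \eI_6 = \OO_K$ force $\eI_6$ to be coprime to $\kc\eI_5$ whenever $\theta_0(\eII') \neq 0$, so the dependence of $\tilde\theta_8(\eII', \kc)$ on $\e_6$ (only through $\eI_6$) and that of $N_{\kc}(\e_6)$ on $\e_6 \bmod \kc\eI_5$ concern disjoint sets of primes. Then I would split the $\e_6$-sum into residue classes modulo $\kc\eI_5$ and apply \cite[Section~6]{arXiv:1302.6151} in each, where the arithmetic weight is simply $\eI_6 \mapsto \tilde\theta_8(\eII', \kc)$, which satisfies the hypothesis \cite[(6.1)]{arXiv:1302.6151} with a divisor-bounded constant by \cite[Lemma~5.4, Lemma~2.2]{arXiv:1302.6151}. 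The pair-counting identity $\sum_{\e_6 \bmod \kc\eI_5} N_{\kc}(\e_6) = \phi_K(\kc\eI_5)$ shows that $N_{\kc}$ has mean value $1$ over the residues $\e_6$ coprime to $\kc\eI_5$; since $\tilde\theta_8(\eII', \kc)$ already restricts $\eI_6$ to be coprime to $\kc\eI_5$, inserting $N_{\kc}$ therefore leaves the mean value unchanged. Hence the mean value of $\e_6 \mapsto \theta_8(\ee', \classtuple)$ equals $\sum_{\kc} (\mu_K(\kc)/\N\kc)\, \mathcal{A}(\tilde\theta_8(\eI_1, \ldots, \eI_5, \eI_6, \eI_7, \kc), \eI_6) = \mathcal{A}(\theta_8'(\eII'), \eI_6)$, as claimed.

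It then remains to bound the error term from \cite[Section~6]{arXiv:1302.6151}. The necessary inputs on the smooth weight are that $g(t_6) = 0$ unless $\N\eI_6$ lies in a bounded range, with the upper bound coming from \eqref{eq:A4_height_1}, and that $g$ is piecewise $C^1$ with controlled variation; both follow by estimating the $\e_8$-integral over the ball-type region cut out by \eqref{eq:A4_height_5} via \cite[Lemma~3.5]{arXiv:1302.6151}, as in the proof of Lemma~\ref{lem:A4_first_summation}. Summing the error over $\eI_1, \ldots, \eI_5, \eI_7$ subject to \eqref{eq:A4_height_1}--\eqref{eq:A4_height_5} should then yield $O_\classtuple(B(\log B)^4)$; the crucial point is that inside $M^{(86)}_\classtuple(B)$ the summation variable $\eI_6$ satisfies $\N\eI_6 > \N\eI_7$, which lets the \cite[Section~6]{arXiv:1302.6151} error decay fast enough in $\N\eI_7$ to be summed over $\eI_7$ without the logarithmic cutoff — and its attendant loss of a factor $\log\log B$ — that was needed in the proof of Lemma~\ref{lem:A3+A1_second_summation}. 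This is exactly why one splits according to whether $\N\eI_6 > \N\eI_7$ or $\N\eI_6 \leq \N\eI_7$. I expect the congruence bookkeeping of the second paragraph — in particular, checking that it reassembles precisely into $\mathcal{A}(\theta_8'(\eII'), \eI_6) V_{86}$ — together with confirming that the decay from $\N\eI_6 > \N\eI_7$ suffices for the $O_\classtuple(B(\log B)^4)$ error, to be the points demanding the most care; the remainder is a transcription of \cite[Section~6]{arXiv:1302.6151} and of the $S_0$ argument in \cite{arXiv:1302.6151}.
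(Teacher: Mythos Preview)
Your proposal is correct and follows essentially the same route as the paper: pull the $\kc$-sum outside, apply \cite[Proposition~6.1]{arXiv:1302.6151} in the $b_0\ge 2$ regime to the $\e_6$-sum with the congruence weight, and reassemble via \cite[Lemma~6.3]{arXiv:1302.6151} into $\mathcal{A}(\theta_8'(\eII'),\eI_6)$. Two small discrepancies worth flagging: the paper bounds the $t_6$-range using \eqref{eq:A4_height_4} rather than \eqref{eq:A4_height_1} (this feeds directly into the error term from Proposition~6.1), and the relevant verification of \cite[(6.1)]{arXiv:1302.6151} for $\tilde\theta_8$ cites \cite[Lemma~5.5]{arXiv:1302.6151} (the $b_0\ge 2$ lemma), not Lemma~5.4.
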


\begin{proof}
  We follow the strategy described in \cite[Section 6]{arXiv:1302.6151} in the
  case $b_0 \geq 2$.  We write
  \begin{equation*}
    M^{(86)}_\classtuple(B) = \frac{2}{\sqrt{|\Delta_K|}}\sum_{\ee'' \in \OO''} \sum_{\substack{\kc \mid \eI_2\\\kc+\eI_1\eI_3 = \OO_K}}\frac{\mu(\kc)}{\N\kc}\Sigma,
  \end{equation*}
  where
  \begin{equation*}
    \Sigma := \sum_{\substack{\e_6 \in \OO_{6*}\\\N\eI_6 >
        \N\eI_7}}\vartheta(\eI_6)\sum_{\substack{\rho \mod
        \kc\eI_5\\\rho\OO_K + \kc\eI_5 = \OO_K \\\rho^2 \equiv_{\kc\eI_5} \e_6\e_7 A}}g(\N\eI_6)\text,
  \end{equation*}
  with $\vartheta(\eI_6) := \tilde\theta_8(\eII', \kc)$ and $g(t) :=
  V_8(\N\eI_1,\ldots,\N\eI_5, t, \N\eI_7; B)$.
 
  By \cite[Lemma 5.5, Lemma 2.2]{arXiv:1302.6151}, the function $\vartheta$
  satisfies \cite[(6.1)]{arXiv:1302.6151} with $C:=0$, $c_\vartheta :=
  2^{\omega_K(\eI_1\eI_2\eI_3\eI_5)}$. 
  By \eqref{eq:A4_height_4}, we have $g(t)=0$ whenever $t > t_2 :=
  B^{1/2}/(\N\eI_1^{1/2}\N\eI_2\N\eI_3\N\eI_4\N\eI_5^{1/2}\N\eI_7^{1/2})$, and,
  by Lemma \cite[Lemma 3.5, (2)]{arXiv:1302.6151} applied to
  \eqref{eq:A4_height_5}, we have $g(t) \ll B^{1/2}/(\N\eI_1^{1/2}\N\eI_5^{1/2}
  \N\eI_7^{1/2})$. Using \cite[Proposition 6.1]{arXiv:1302.6151}, we obtain
  \begin{align*}
    \Sigma &= \frac{2 \pi}{\sqrt{|\Delta_K|}}\phi_K^*(\kc\eI_5)\mathcal{A}(\vartheta(\aaa), \aaa, \kc\eI_5)\int_{t \geq \N\eI_7}g(t)\dd t\\
    &+ O\left(\frac{2^{\omega_K(\eI_1\eI_2\eI_3\eI_5)}B^{1/2}}{\N\eI_1^{1/2}\N\eI_5^{1/2}\N\eI_7^{1/2}}\left(\frac{B^{1/4}\N\kc^{1/2}\N\eI_5^{1/4}}{\N\eI_1^{1/4}\N\eI_2^{1/2}\N\eI_3^{1/2}\N\eI_4^{1/2}\N\eI_7^{1/4}}
        + \N(\kc\eI_5)\log B\right)\right)\text.
  \end{align*}
  Using \cite[Lemma 6.3]{arXiv:1302.6151} we see that the main term in the
  lemma is correct.

  For the error term, we may sum over $\kc$ and over the ideals $\eI_j$ instead of
  the $\e_j$, since $|\OO_K^\times| < \infty$. By \eqref{eq:A4_height_1} and our
  condition $\N\eI_6 > \N\eI_7$ it suffices to sum over $\kc$ and all $(\eI_1, \ldots,
  \eI_5, \eI_7)$ satisfying
  \begin{equation}\label{eq:A4_first_height_cond_I_7}
    \N\eI_1^2\N\eI_2^4\N\eI_3^3\N\eI_4^2\N\eI_5^3\N\eI_7 \leq B\text.
  \end{equation}
  Thus, the total error is bounded by
  \begin{align*}
    &\sum_{\substack{\eI_1, \ldots, \eI_5, \eI_7\\\eqref{eq:A4_first_height_cond_I_7}}}\left(\frac{2^{\omega_K(\eI_2) + \omega_K(\eI_1\eI_2\eI_3\eI_5)} B^{3/4}}{\N\eI_1^{3/4}\N\eI_2^{1/2}\N\eI_3^{1/2}\N\eI_4^{1/2}\N\eI_5^{1/4}\N\eI_7^{3/4}} + \frac{2^{\omega_K(\eI_2) + \omega_K(\eI_1\eI_2\eI_3\eI_5)}B^{1/2}\log B}{\N\eI_1^{1/2}\N\eI_5^{-1/2}\N\eI_7^{1/2}} \right)\\
    &\ll \sum_{\substack{\eI_1, \ldots, \eI_5\\\N\eI_j \leq B}}\left(\frac{2^{\omega_K(\eI_2) +\omega_K(\eI_1\eI_2\eI_3\eI_5)}B}{\N\eI_1^{5/4}\N\eI_2^{3/2}\N\eI_3^{5/4}\N\eI_4\N\eI_5} + \frac{2^{\omega_K(\eI_2) + \omega_K(\eI_1\eI_2\eI_3\eI_5)}B\log B}{\N\eI_1^{3/2}\N\eI_2^2\N\eI_3^{3/2}\N\eI_4\N\eI_5}\right)\\
    &\ll B(\log B)^3 + B(\log B)^4 \ll B(\log B)^4\text{.}\qedhere
  \end{align*}
\end{proof}

\begin{lemma}\label{lem:A4_second_summation_Ma_ideals}
  If $\eII''$ runs over all six-tuples $(\eI_1, \ldots, \eI_5, \eI_7)$
  of nonzero ideals of $\OO_K$ then we have
  \begin{equation*}
    N_{86}(B) = \left(\frac{2}{\sqrt{|\Delta_K|}}\right)^2\sum_{\eII''}\mathcal{A}(\theta_8'(\eII'),\eI_6)V_{86}(\N\eI_1, \ldots, \N\eI_5, \N\eI_7; B) + O(B(\log B)^4)\text. 
  \end{equation*}
\end{lemma}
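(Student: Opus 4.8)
The plan is to imitate the proof of \cite[Lemma 9.4]{arXiv:1302.6151}: start from the definition $N_{86}(B) = \omega_K^{-6}\sum_{\classtuple \in \classrep^6}M^{(86)}_\classtuple(B)$, insert the expansion of $M^{(86)}_\classtuple(B)$ provided by Lemma \ref{lem:A4_second_summation_Ma}, and reorganize the resulting double sum over $\classtuple$ and $\ee'' = (\e_1, \dots, \e_5, \e_7)$ into a single sum over six-tuples of nonzero integral ideals. Since $\classrep^6$ is finite, of cardinality $h_K^6$, the error term $O_\classtuple(B(\log B)^4)$ summed over $\classtuple$ and divided by $\omega_K^6$ is still $O(B(\log B)^4)$ and can be absorbed, so only the main terms need attention.

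For the main term, the first point to record is that the summand $\left(\frac{2}{\sqrt{|\Delta_K|}}\right)^2\mathcal{A}(\theta_8'(\eII'),\eI_6)\,V_{86}(\N\eI_1,\dots,\N\eI_5,\N\eI_7;B)$ depends on $\classtuple$ and $\ee''$ only through the integral ideals $\eI_1, \dots, \eI_5, \eI_7$. Indeed, $V_{86}$ depends only on the norms $\N\eI_1, \dots, \N\eI_5, \N\eI_7$ (the factor $u_\classtuple$ having been absorbed into $\mathcal{R}(B)$ already in Lemma \ref{lem:A4_first_summation}); and $\theta_8'(\eII') = \sum_{\kc \mid \eI_2,\ \kc + \eI_1\eI_3 = \OO_K}\mu_K(\kc)\N\kc^{-1}\tilde\theta_8(\eII',\kc)$ is by its definition a function of the ideals $\eI_1, \dots, \eI_6$ alone, so $\mathcal{A}(\theta_8'(\eII'),\eI_6)$ is a function of $\eI_1,\dots,\eI_5,\eI_7$ alone. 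In particular the $\classtuple$-dependent quantities $t(\e_2,\e_5)$ and $A$ from Lemma \ref{lem:A4_first_summation} have disappeared: they entered only through the inner congruence sum over $\rho$, which was absorbed into $\mathcal{A}$ via \cite[Lemma 6.3]{arXiv:1302.6151} in the proof of Lemma \ref{lem:A4_second_summation_Ma}.

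It then remains to check that, as $\classtuple$ ranges over $\classrep^6$ and each $\e_j$ (for $j \in \{1,\dots,5,7\}$) ranges over $\OO_j^{\neq 0}$, the tuple $(\eI_1,\dots,\eI_5,\eI_7) = (\e_1\OO_1^{-1},\dots,\e_5\OO_5^{-1},\e_7\OO_7^{-1})$ runs over all six-tuples of nonzero integral ideals of $\OO_K$, each exactly $\omega_K^6$ times. For fixed $\classtuple$, the map $\e_j \mapsto \e_j\OO_j^{-1}$ is an $\omega_K$-to-one surjection from $\OO_j^{\neq 0}$ onto the set of nonzero integral ideals in the class $[\OO_j]^{-1}$, and as $\classtuple = (C_0,\dots,C_5)$ varies the tuple of classes $([\OO_1]^{-1},\dots,[\OO_5]^{-1},[\OO_7]^{-1})$ runs bijectively over $\mathrm{Cl}(K)^6$; the latter is because $[\OO_1],\dots,[\OO_5],[\OO_7]$ are expressed in terms of $[C_0],\dots,[C_5]$ by an integer matrix of determinant $\pm1$. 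With this, the double sum over $\classtuple$ and $\ee''$ of any function of $(\eI_1,\dots,\eI_5,\eI_7)$ equals $\omega_K^6$ times the sum of that function over all six-tuples $\eII''$ of nonzero integral ideals, and dividing by $\omega_K^6$ yields the claim (six-tuples violating the coprimality conditions contribute nothing, since then $\theta_0$, hence $\theta_8'(\eII')$ for every $\eI_6$, vanishes). The only part that is not purely formal is the unimodularity of that matrix, a short direct computation entirely parallel to the one in \cite[Lemma 9.4]{arXiv:1302.6151}; this is where I expect the (modest) effort to lie.
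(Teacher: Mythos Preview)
Your proposal is correct and follows precisely the approach the paper intends: the paper's proof is the single sentence ``This is analogous to \cite[Lemma 9.4]{arXiv:1302.6151}'', and you have spelled out exactly that analogy, including the key unimodularity check for the matrix expressing $[\OO_1],\dots,[\OO_5],[\OO_7]$ in terms of $[C_0],\dots,[C_5]$.
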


\begin{proof}
 This is analogous to \cite[Lemma 9.4]{arXiv:1302.6151}.
\end{proof}

\subsubsection{The remaining summations for $N_{86}(B)$}
\begin{lemma}\label{lem:A4_completion_Ma}
  We have
  \begin{equation*}
    N_{86}(B) = \pi^6\left(\frac{2}{\sqrt{|\Delta_K|}}\right)^8 \left(\frac{h_K}{\omega_K}\right)^6 \theta_0V_{860}(B) + O(B(\log B)^4\log \log B),
  \end{equation*}
  where $\theta_0$ is as in \eqref{eq:def_theta_0} and
  \begin{equation*}
    V_{860}(B) := \int_{t_1, \dots, t_5, t_7 \ge 1}V_{86}(t_1, \dots, t_5, t_7; B) \dd t_1 \cdots \dd t_5 \dd t_7,
  \end{equation*}
  with real variables $t_1, \ldots, t_5, t_7$.
\end{lemma}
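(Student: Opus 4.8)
The plan is to carry out the remaining four summations over $\eI_1, \ldots, \eI_5, \eI_7$ in the formula from Lemma~\ref{lem:A4_second_summation_Ma_ideals} by a direct application of the results of \cite[Section~7]{arXiv:1302.6151}, exactly in the spirit of \cite[Lemma~9.5, Lemma~9.9]{arXiv:1302.6151}. First I would record a pointwise upper bound for $V_{86}(t_1, \ldots, t_5, t_7; B)$ that exhibits enough decay in each of the six variables to guarantee convergence of the iterated sums with an acceptable error. The natural source is \cite[Lemma~3.5]{arXiv:1302.6151} applied to the height condition \eqref{eq:A4_height_5} (bounding the $\e_8$-integral) together with \eqref{eq:A4_height_1} and \eqref{eq:A4_height_4} (which bound $t_6$ in terms of $B$ and constrain the $t_j$); this yields something of the shape
\begin{equation*}
  V_{86}(t_1, \ldots, t_5, t_7; B) \ll \frac{B}{t_1 \cdots t_5 t_7}\left(\frac{B}{t_1^2t_2^4t_3^3t_4^2t_5^3}\right)^{-\delta}
\end{equation*}
for a suitable $\delta > 0$, with the negative power of the ``$\e_1$-height'' providing the decay that makes the sum over the five remaining exponents converge after five logarithmic losses.

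Next I would invoke \cite[Proposition~7.3]{arXiv:1302.6151} with $r = 5$: the arithmetic function $\mathcal{A}(\theta_8'(\eII'), \eI_6)$ plays the role of the multiplicative weight, and one checks (via \cite[Lemma~5.5, Lemma~2.2]{arXiv:1302.6151}, as already used in the proof of Lemma~\ref{lem:A4_second_summation_Ma}) that it satisfies the hypotheses of that proposition, with average value contributing the Euler product $\theta_0$ from \eqref{eq:def_theta_0} together with the factor $(h_K/\omega_K)^6$ and the powers of $2/\sqrt{|\Delta_K|}$ coming from the six applications of the lattice point counting. Each application of the proposition replaces a sum over one ideal $\eI_j$ by an integral over a real variable $t_j \geq 1$, at the cost of an error term of the expected size $O(B(\log B)^4 \log\log B)$; after the five summations one arrives at the integral $V_{860}(B)$. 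Finally, as in \cite[Lemma~9.5, Lemma~9.9]{arXiv:1302.6151}, I would pass from the real integration variables to complex ones using polar coordinates where convenient, so that $V_{860}(B)$ matches the stated integral.

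The main obstacle I anticipate is purely bookkeeping: verifying that the decay exponent $\delta$ extracted from \eqref{eq:A4_height_5}, \eqref{eq:A4_height_1}, \eqref{eq:A4_height_4} is large enough, in \emph{every} one of the five summation variables simultaneously, for \cite[Proposition~7.3]{arXiv:1302.6151} to apply without generating an error worse than $O(B(\log B)^4 \log\log B)$ — this is the same delicate point that forced the dichotomy $\N\eI_6 > \N\eI_7$ versus $\N\eI_6 \le \N\eI_7$ in the first place, and one must check the bound is robust under that restriction. Everything else is a routine transcription of the arguments in \cite[Section~9]{arXiv:1302.6151}, so I would not belabor the computations; it suffices to cite \cite[Proposition~7.3]{arXiv:1302.6151} and point to the analogous lemmas there.
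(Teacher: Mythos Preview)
Your plan matches the paper's: bound $V_{86}$ and then apply \cite[Proposition~7.3]{arXiv:1302.6151} with $r=5$. Three small corrections to the specifics. First, the paper records \emph{two} bounds on $V_{86}$, not one: from \cite[Lemma~3.5,(5)]{arXiv:1302.6151} applied to \eqref{eq:A4_height_5} one gets
\[
V_{86}(t_1,\ldots,t_5,t_7;B)\ll \frac{B}{t_1\cdots t_5 t_7}\left(\frac{B}{t_1^3 t_2^6 t_3^4 t_4^2 t_5^5}\right)^{-1/6},
\]
and a crude upper bound using \eqref{eq:A4_height_1} for $t_6$ and \eqref{eq:A4_height_3} (not \eqref{eq:A4_height_4}) for $\abs{\e_8}$ gives the complementary estimate with exponent $+1$ on the same ratio; both are needed in Proposition~7.3 here because the controlling monomial $t_1^3 t_2^6 t_3^4 t_4^2 t_5^5$ is not itself one of the height monomials (unlike in the $\Athree+\Aone$, $\Dfour$, $\Dfive$ cases, where a single negative-exponent bound suffices). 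Second, the monomial you wrote, $t_1^2 t_2^4 t_3^3 t_4^2 t_5^3$, is the one from \eqref{eq:A4_height_1} with $t_6$ dropped, not the one that falls out of \cite[Lemma~3.5,(5)]{arXiv:1302.6151}. Third, no polar coordinates are used in this lemma; $V_{860}(B)$ is already stated in the real variables $t_1,\ldots,t_5,t_7$, and the passage to complex variables happens only later when combining with $V_{870}(B)$.
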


\begin{proof}
  By \cite[Lemma 3.5, (5)]{arXiv:1302.6151}, applied to \eqref{eq:A4_height_5}, we have, for $t_7 \geq 1$,
  \begin{equation*}
    V_{86}(t_1, \ldots, t_5, t_7; B) \ll \frac{B}{t_1 \cdots t_5
      t_7}\left(\frac{B}{t_1^3t_2^6t_3^4t_4^2t_5^5}\right)^{-1/6}
    \text.
  \end{equation*}
  Furthermore, using \eqref{eq:A4_height_1} to bound $t_6$ and \eqref{eq:A4_height_3} to bound $\abs{\e_8}$, we see that
  \begin{equation*}
    V_{86}(t_1, \ldots, t_5, t_7; B) \ll
    \frac{1}{t_5}\left(\frac{B}{t_1^2t_2^4t_3^3t_4^2t_5^3}\right)\left(\frac{B}{t_1^2t_2^3t_3^2t_4t_5^2}\right)
    =\frac{B}{t_1 \cdots t_5
      t_7}\left(\frac{B}{t_1^3t_2^6t_3^4t_4^2t_5^5}\right)\text.
  \end{equation*}
  We apply \cite[Proposition 7.3]{arXiv:1302.6151} with $r=5$.
\end{proof}

\subsubsection{The second summation over $\e_7$ in $M_\classtuple^{(87)}(B)$}
\begin{lemma}\label{lem:A4_second_summation_Mb}
  Write $\ee'' := (\e_1, \dots, \e_6)$. We have
  \begin{align*}
    M^{(87)}_\classtuple(B) &=\left(\frac{2}{\sqrt{|\Delta_K|}}\right)^2
    \sum_{\ee''\in \OO_{1*} \times \dots \times \OO_{6*}}
    \mathcal{A}(\theta_8'(\eII'), \eI_7)V_{87}(\N\eI_1,\ldots,\N\eI_6; B)\\ &+ O_\classtuple(B(\log B)^4),
  \end{align*}
  where, for $t_1,\ldots, t_6 \geq 1$,
  \begin{equation*}
    V_{87}(t_1, \ldots, t_6; B) := \frac{\pi}{t_5}\int\limits_{\substack{(\sqrt{t_1},
      \ldots, \sqrt{t_7}, \e_8)\in \mathcal{R}(B)\\t_7 \geq t_6}} \dd t_7 \dd \e_8\text,
  \end{equation*}
  with a real variable $t_7$ and a complex variable $\e_8$.
\end{lemma}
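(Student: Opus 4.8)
The plan is to mirror the proof of Lemma~\ref{lem:A4_second_summation_Ma}, replacing the summation over $\e_6$ (subject to $\N\eI_6 > \N\eI_7$) by the summation over $\e_7$ (subject to $\N\eI_7 \geq \N\eI_6$), and to follow the strategy of \cite[Section 6]{arXiv:1302.6151}, here in the case $b_0 = 1$ since $\e_7$ appears linearly in the torsor equation \eqref{eq:A4_torsor}. First I would write
\begin{equation*}
  M^{(87)}_\classtuple(B) = \frac{2}{\sqrt{|\Delta_K|}}\sum_{\ee'' \in \OO_{1*} \times \dots \times \OO_{6*}}\sum_{\substack{\e_7 \in \OO_{7*}\\\N\eI_7 \geq \N\eI_6}}\vartheta(\eI_7)g(\N\eI_7),
\end{equation*}
where $\vartheta(\eI_7) := \theta_8'(\eI_1, \ldots, \eI_6, \eI_7)$ and $g(t) := V_8(\N\eI_1, \ldots, \N\eI_6, t; B)$. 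By \cite[Lemma 5.5, Lemma 2.2]{arXiv:1302.6151}, $\vartheta$ satisfies \cite[(6.1)]{arXiv:1302.6151} with $C = 0$ and a suitable $c_\vartheta = 2^{O(\omega_K(\eI_1\cdots\eI_6))}$; note that the extra congruence factor counting $\rho$ modulo $\kc\eI_5$ that complicated the $\e_6$-summation in Lemma~\ref{lem:A4_second_summation_Ma} is absent here, because that factor does not involve $\e_7$, so $\theta_8'$ is genuinely a multiplicative-type function of $\eI_7$ and the plain \cite[Proposition 6.1]{arXiv:1302.6151} applies directly.

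Next I would record the support and size bounds for $g$: using \eqref{eq:A4_height_1} and \eqref{eq:A4_height_3} we get $g(t) = 0$ unless $t$ is bounded above by an explicit quantity $t_2$ in terms of $\N\eI_1, \ldots, \N\eI_5$ (and a corresponding height restriction on $\eII''$ holds), and using \cite[Lemma 3.5, (2)]{arXiv:1302.6151} applied to \eqref{eq:A4_height_5} we get an upper bound of the shape $g(t) \ll B^{1/2}/(\N\eI_1^{1/2}\N\eI_5^{1/2} t^{1/2})$, together with the cruder $g(t) \ll B/(\N\eI_1 \cdots \N\eI_6 t)$. Then \cite[Proposition 6.1]{arXiv:1302.6151} applied to the sum over $\e_7 \in \OO_{7*}$ with $\N\eI_7 \geq \N\eI_6$ yields the main term
\begin{equation*}
  \frac{2\pi}{\sqrt{|\Delta_K|}}\mathcal{A}(\vartheta(\aaa), \aaa)\int_{t \geq \N\eI_6} g(t)\dd t,
\end{equation*}
which, after summing over $\ee''$ and converting $\int_{t \geq \N\eI_6}$ into the $t_7$-integral in the definition of $V_{87}$, gives exactly the claimed main term; here one uses that $\mathcal{A}(\theta_8'(\eII'), \eI_7)$ as a function of $\eI_7$ is recovered from $\mathcal{A}(\vartheta(\aaa), \aaa)$, analogously to the passage in Lemma~\ref{lem:A3+A1_second_summation}.

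The real work is the error term, and this is the step I expect to be the main obstacle — specifically, checking that every piece is $O_\classtuple(B(\log B)^4)$ rather than something larger. The error from \cite[Proposition 6.1]{arXiv:1302.6151} has two parts: one of the form $c_\vartheta \cdot (\text{sup of } g) \ll 2^{O(\omega_K)} B^{1/2}/(\N\eI_1^{1/2}\N\eI_5^{1/2}\N\eI_6^{1/2})$ from the truncation, and one of the form $c_\vartheta \N\eI_6 \log B \cdot (\text{sup of } g)$ or similar from the tail; I would sum each over $\eII''$ with $\N\eI_j \leq B$ subject to the height restriction coming from \eqref{eq:A4_height_1} (together with $\N\eI_7 \geq \N\eI_6$, which lets one bound $\N\eI_6$ by a power of $B$ divided by the other norms), reducing each to a convergent Euler product times $B(\log B)^{\leq 4}$ by the standard estimate $\sum_{\N\eI \leq x} 2^{\omega_K(\eI)}/\N\eI \ll (\log x)^2$ and its variants. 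One must also verify that replacing $g$ by its upper bound in the main-term integral, and extending the $\e_7$-sum beyond the true range, costs only $O(B(\log B)^4)$; this is handled exactly as in the proof of Lemma~\ref{lem:A3+A1_second_summation}, using $0 \leq \vartheta \leq 1$ and $0 \leq \mathcal{A}(\vartheta(\aaa),\aaa) \leq 1$. Since $\e_7$ appears linearly, no dichotomy over the size of $\e_7$ (such as the $t_1 = (\log B)^{14}$ split in Lemma~\ref{lem:A3+A1_second_summation}) should be needed, so the argument is more streamlined than in the $S_1$ case.
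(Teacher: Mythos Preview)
There is a genuine gap. Your starting identity
\[
M^{(87)}_\classtuple(B) = \frac{2}{\sqrt{|\Delta_K|}}\sum_{\ee''}\sum_{\substack{\e_7\\\N\eI_7\ge\N\eI_6}}\theta_8'(\eII')\,g(\N\eI_7)
\]
is false, because $M^{(87)}_\classtuple(B)$ is defined with the weight $\theta_8(\ee',\classtuple)$ from Lemma~\ref{lem:A4_first_summation}, not with $\theta_8'(\eII')$. The congruence factor you say is ``absent'' is the sum over $\rho$ with $\rho^2\equiv_{\kc\eI_5}\e_6\e_7 A$, and this \emph{does} depend on $\e_7$. The tag ``$b_0$'' in \cite[Section~6]{arXiv:1302.6151} refers to the exponent of $\e_8$ in the torsor equation (here $b_0=2$, since $\e_1\e_8^2$ occurs), not to the exponent of $\e_7$; it is this $b_0=2$ that forces the $\rho$-sum into $\theta_8$, and the second summation over $\e_7$ must therefore be carried out in the $b_0\ge 2$ framework exactly as in Lemma~\ref{lem:A4_second_summation_Ma}.

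Even after fixing this, a straightforward application of \cite[Proposition~6.1]{arXiv:1302.6151} with $\vartheta(\eI_7)=\tilde\theta_8(\eII',\kc)$ fails: since $E_7$ is non-adjacent to $E_1,\dots,E_5$, one gets $c_\vartheta=2^{\omega_K(\eI_1\eI_2\eI_3\eI_4\eI_5)}$, and the extra $2^{\omega_K(\eI_5)}$ turns the $\eI_5$-sum $\sum 1/\N\eI_5$ into $\sum 2^{\omega_K(\eI_5)}/\N\eI_5\ll(\log B)^2$, so the error becomes $O(B(\log B)^5)$. The paper's missing idea (taken from \cite{MR2543667}) is that the coprimality $\eI_5+\eI_7=\OO_K$ is redundant: if $\p\mid\eI_5+\eI_7$ then $\e_6\e_7 A$ is not invertible modulo $\p$, so the $\rho$-sum already vanishes. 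One may therefore replace $\vartheta$ by $\vartheta'$ in which this condition is dropped; then $\vartheta'\in\Theta(\eI_1\eI_2\eI_3\eI_4,1,1,1)$ with $c_{\vartheta'}=2^{\omega_K(\eI_1\eI_2\eI_3\eI_4)}$, and summing the resulting error under the constraint \eqref{eq:A4_first_fourth_height_cond_I_6} gives $O_\classtuple(B(\log B)^4)$ as required. Without this replacement the claimed error bound cannot be reached.
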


\begin{proof}
  Again, we apply the strategy described in \cite[Section 6]{arXiv:1302.6151}
  in the case $b_0 \geq 2$. However, this time we must examine the arithmetic
  function more carefully, since a straightforward application as in Lemma
  \ref{lem:A4_second_summation_Ma} would not yield sufficiently good error
  terms. We write
  \begin{equation}\label{eq:A4_sum_87_M87}
    M^{(87)}_\classtuple(B) = \frac{2}{\sqrt{|\Delta_K|}}\sum_{\ee'' \in
      \OO_{1*} \times \dots \times \OO_{6*}} \sum_{\substack{\kc \mid \eI_2\\\kc
        + \eI_1\eI_3 = \OO_K}}\frac{\mu_K(\kc)}{\N\kc}\Sigma,
  \end{equation}
  where
  \begin{equation}\label{eq:A4_sum_87_def_sigma}
    \Sigma := \sum_{\substack{\e_7 \in \OO_{7*}\\\N\eI_7 \geq
        \N\eI_6}}\vartheta(\eI_7)\sum_{\substack{\rho \mod \kc\eI_5\\\rho\OO_K +
        \kc\eI_5 = \OO_K \\\rho^2 \equiv_{\kc\eI_5} \e_6\e_7
        A}}g(\N\eI_7)\text,
  \end{equation}
  with $\vartheta(\eI_7) := \tilde\theta_8(\eII', \kc)$ and
  $g(t):=V_8(\N\eI_1, \ldots, \N\eI_6, t; B)$.
 
  The key observation is that, as in \cite{MR2543667}, we can replace $\vartheta(\eI_7)$ by the function
 \begin{equation*}
    \vartheta'(\eI_7) := \theta_0'(\eII')\frac{\phi_K^*(\eI_2\eI_3\eI_4\eI_6)}{\phi_K^*(\eI_2+
      \kc\eI_5)}\text,
  \end{equation*}
  where $\theta_0'$ encodes all coprimality conditions that are encoded by
  $\theta_0$, except for allowing $\eI_5 + \eI_7 \neq \OO_K$. For the
  representation $\theta_0' = \prod_\p \theta_{0,\p}'(J_\p(\eII'))$ as a
  product of local factors, this amounts to
  \begin{equation*}
    \theta'_{0,\p}(J):=
    \begin{cases}
      1 &\text{ if } \theta_{0,\p}(J) = 1 \text{ or } J=\{5,7\},\\
      0 &\text{ otherwise.}
    \end{cases}
  \end{equation*}
  Replacing $\vartheta$ by $\vartheta'$ in \eqref{eq:A4_sum_87_def_sigma} does
  not change $\Sigma$ for any $\ee'' \in \OO_{1*}\times\cdots\times\OO_{6*}$
  and $\kc$ as in \eqref{eq:A4_sum_87_M87}, since the sum over $\rho$ is zero
  whenever $\eI_5+\eI_7 \neq \OO_K$. Indeed, we know from Lemma
  \ref{lem:A4_first_summation} that $\e_6\e_7A$ is invertible modulo $\kc\eI_5$
  whenever $\kc$ is as in \eqref{eq:A4_sum_87_M87} and $\theta_0(\eII') \neq
  0$. This implies that $v_\p(\e_6A\OO_7) = 0$ for any fixed $\ee'', \kc$ as in
  \eqref{eq:A4_sum_87_M87} with $\Sigma \neq 0$ and any $\p \mid
  \kc\eI_5$. Therefore, if $p \mid \eI_5+\eI_7$ then the second and third
  condition under the sum over $\rho$ in \eqref{eq:A4_sum_87_def_sigma}
  contradict each other.

  Since $\vartheta'(\eI_7) = \vartheta(\eI_7)$ whenever $\eI_5+\eI_7 = \OO_K$,
  we have $\mathcal{A}(\vartheta'(\aaa), \aaa, \kc\eI_5) =
  \mathcal{A}(\vartheta(\aaa), \aaa, \kc\eI_5))$. 

  Moreover, we obtain immediately from the definition that $\vartheta' \in
  \Theta(\eI_1\eI_2\eI_3\eI_4,1,1,1)$ (see \cite[Definition
  2.1]{arXiv:1302.6151}). Hence, by \cite[Lemma 2.2]{arXiv:1302.6151}, the
  function $\vartheta'$ satisfies \cite[(6.1)]{arXiv:1302.6151} with $c_\theta
  := 2^{\omega_K(\eI_1\eI_2\eI_3\eI_4)}$, $C:=0$.
  
  By \eqref{eq:A4_height_4}, $g(t)=0$ whenever $t > t_2 :=
  B/(\N\eI_1\N\eI_2^2\N\eI_3^2\N\eI_4^2\N\eI_5\N\eI_6^2)$, and, by \cite[Lemma
  3.5, (2)]{arXiv:1302.6151} applied to \eqref{eq:A4_height_5}, $g(t) \ll B^{1/2}/(\N\eI_1^{1/2}\N\eI_5^{1/2})\cdot
  t^{-1/2}$. With \cite[Proposition 6.1]{arXiv:1302.6151}, we obtain
  \begin{align*}
    \Sigma &= \frac{2 \pi}{\sqrt{|\Delta_K|}}\phi_K^*(\kc\eI_5)\mathcal{A}(\vartheta(\aaa), \aaa, \kc\eI_5)\int_{t \geq\N\eI_6}g(t)\dd t\\
    &+ O\left(\frac{2^{\omega_K(\eI_1\eI_2\eI_3\eI_4)}B^{1/2}}{\N\eI_1^{1/2}\N\eI_5^{1/2}}\left(\sqrt{\N(\kc\eI_5)}\log
        B +
        \frac{\N\kc\eI_5}{\N\eI_6^{1/2}}\log(\N\eI_6+2)\right)\right)\text.
  \end{align*}
  As in Lemma \ref{lem:A4_second_summation_Ma}, the main term in the lemma is
  correct, and for the error term we may sum over the ideals $\kc$ and $\eI_j$ instead of
  the $\e_j$. By \eqref{eq:A4_height_1}, \eqref{eq:A4_height_4}, and our
  condition $\N\eI_7 \geq \N\eI_6$, it suffices to sum over $\kc$ and the
  $(\eI_1, \ldots, \eI_6)$ satisfying \eqref{eq:A4_height_1} and
  \begin{equation}\label{eq:A4_first_fourth_height_cond_I_6}
    \N\eI_1^3\N\eI_2^6\N\eI_3^5\N\eI_4^4\N\eI_5^4\N\eI_6^4 \leq B^2\text.
  \end{equation}
  Thus, the total error is bounded by
  \begin{align*}
    &\sum_{\substack{\eI_1, \ldots,
        \eI_6\\\eqref{eq:A4_first_fourth_height_cond_I_6}}}\left(\frac{2^{\omega_K(\eI_2) + \omega_K(\eI_1\eI_2\eI_3\eI_4)} B^{1/2}\log B}{\N\eI_1^{1/2}} + \frac{2^{\omega_K(\eI_2) + \omega_K(\eI_1\eI_2\eI_3\eI_4)} \N\eI_5^{1/2}B^{1/2}\log B}{\N\eI_1^{1/2}\N\eI_6^{1/2}}\right)\\
    &\ll \sum_{\substack{\eI_1, \ldots, \eI_5\\\N\eI_j \leq
        B}}\frac{2^{\omega_K(\eI_2) +\omega_K(\eI_1\eI_2\eI_3\eI_4)}B\log
      B}{\N\eI_1^{5/4}\N\eI_2^{3/2}\N\eI_3^{5/4}\N\eI_4\N\eI_5} +
    \sum_{\substack{\eI_1, \ldots, \eI_4, \eI_6\\\N\eI_j \leq B}} \frac{2^{\omega_K(\eI_2) + \omega_K(\eI_1\eI_2\eI_3\eI_4)}B\log B}{\N\eI_1^{3/2}\N\eI_2^2\N\eI_3^{3/2}\N\eI_4\N\eI_6}\\
    &\ll B(\log B)^4 + B(\log B)^4 \ll B(\log B)^4\text.\qedhere
  \end{align*}
\end{proof}

\begin{lemma}\label{lem:A4_second_summation_Mb_ideals}
  If $\eII''$ runs over all six-tuples $(\eI_1, \ldots, \eI_6)$ of
  nonzero ideals of $\OO_K$ then we have
  \begin{equation*}
    N_{87}(B) =
    \left(\frac{2}{\sqrt{|\Delta_K|}}\right)^2\sum_{\eII''}\mathcal{A}(\theta_8'(\eII'),
    \eI_7)V_{87}(\N\eI_1,\ldots,\N\eI_6; B) + O(B(\log B)^4)\text.
   \end{equation*}
\end{lemma}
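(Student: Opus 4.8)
The plan is to deduce Lemma~\ref{lem:A4_second_summation_Mb_ideals} from Lemma~\ref{lem:A4_second_summation_Mb} by exactly the same passage from counting with fixed ideal classes $\classtuple$ to summing over \emph{all} tuples of nonzero ideals that was carried out in \cite[Lemma 9.4]{arXiv:1302.6151} (and, in the present article, in Lemma~\ref{lem:A3+A1_second_summation_ideals}). Concretely, one starts from the definition $N_{87}(B) = \frac{1}{\omega_K^6}\sum_{\classtuple \in \classrep^6} M^{(87)}_\classtuple(B)$, substitutes the asymptotic expression for $M^{(87)}_\classtuple(B)$ from Lemma~\ref{lem:A4_second_summation_Mb}, and then reorganizes the resulting double sum (over $\classtuple$ and over $\ee'' \in \OO_{1*}\times\cdots\times\OO_{6*}$) into a single sum over six-tuples of nonzero integral ideals $\eII'' = (\eI_1,\ldots,\eI_6)$.

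The key point that makes this work is the same bijective bookkeeping as in \cite[Lemma 9.4]{arXiv:1302.6151}: for each fixed $\classtuple$, the map $\e_j \mapsto \eI_j = \e_j\OO_j^{-1}$ sends $\OO_{j*}=\OO_j^{\neq 0}$ onto the set of nonzero integral ideals lying in a fixed ideal class (namely the class of $\OO_j^{-1}$), and each such ideal has exactly $\omega_K$ preimages under this map (the units of $\OO_K$). Hence summing $\frac{1}{\omega_K^6}\sum_\classtuple \sum_{\ee''}$ of a quantity depending only on the ideals $\eI_1,\ldots,\eI_6$ collapses: the $\omega_K^6$ in the denominator cancels the $\omega_K^6$ overcounting from units, and as $\classtuple$ ranges over $\classrep^6$ the six ideal classes of $\OO_1^{-1},\ldots,\OO_6^{-1}$ range (with the correct multiplicity, by the structure of the definitions of the $\OO_j$ in terms of $C_0,\ldots,C_5$) over all of $\mathrm{Cl}(\OO_K)^6$, so that the ideals $\eI_1,\ldots,\eI_6$ range precisely once over all six-tuples of nonzero integral ideals. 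One must check that the summand — the arithmetic factor $\mathcal{A}(\theta_8'(\eII'),\eI_7)$ and the volume $V_{87}(\N\eI_1,\ldots,\N\eI_6;B)$ — genuinely depends only on $\eI_1,\ldots,\eI_6$ and not on the auxiliary choice of $\classtuple$ or of representatives $\e_j$; this is immediate from the definitions, since $\theta_8'$ is expressed entirely through the $\eI_j$ and $\kc$ (with $\kc$ ranging over divisors of $\eI_2$), and $V_{87}$ depends only on the norms $\N\eI_1,\ldots,\N\eI_6$.

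Finally, the error terms: Lemma~\ref{lem:A4_second_summation_Mb} gives an error $O_\classtuple(B(\log B)^4)$ for each of the finitely many $\classtuple \in \classrep^6$, so summing over $\classtuple$ and dividing by $\omega_K^6$ produces an error $O(B(\log B)^4)$ with an implied constant depending only on $K$, as claimed. I do not expect any genuine obstacle here; the only mild subtlety — and the step I would be most careful about — is verifying that the combinatorics of the ideal classes works out so that the $\eII''$ range over all six-tuples \emph{exactly once} (as opposed to with some constant multiplicity that would then have to be absorbed), which is exactly the content already verified in \cite[Lemma 9.4]{arXiv:1302.6151}. Accordingly the proof is a one-line reference, completely parallel to Lemma~\ref{lem:A3+A1_second_summation_ideals}: ``This is analogous to \cite[Lemma 9.4]{arXiv:1302.6151}.''
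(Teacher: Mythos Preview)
Your proposal is correct and matches the paper's own proof exactly: the paper simply writes ``This is analogous to \cite[Lemma 9.4]{arXiv:1302.6151}.'' Your expanded explanation of the underlying bijective bookkeeping (units accounting for the $\omega_K^6$, the map $(C_0,\dots,C_5)\mapsto([\OO_1],\dots,[\OO_6])$ giving a bijection onto $\mathrm{Cl}(\OO_K)^6$, and the summand depending only on the ideals) is precisely what that reference encodes.
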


\begin{proof}
  This is analogous to \cite[Lemma 9.4]{arXiv:1302.6151}.
\end{proof}

\subsubsection{The remaining summations for $N_{87}(B)$}
\begin{lemma}\label{lem:A4_completion_Mb}
  We have
  \begin{equation*}
    N_{87}(B) = \pi^6\left(\frac{2}{\sqrt{|\Delta_K|}}\right)^8 \left(\frac{h_K}{\omega_K}\right)^6 \theta_0V_{870}(B) + O(B(\log B)^4\log \log B),
  \end{equation*}
  where $\theta_0$ is given in \eqref{eq:def_theta_0} and
  \begin{equation*}
    V_{870}(B) := \int_{t_1, \dots, t_6 \ge 1}V_{87}(t_1, \dots, t_6; B) \dd t_1 \cdots \dd t_6,
  \end{equation*}
  with real variables $t_1, \ldots, t_6$.
\end{lemma}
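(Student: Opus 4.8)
The goal is entirely parallel to the proof of Lemma \ref{lem:A4_completion_Ma}: starting from the expression for $N_{87}(B)$ in Lemma \ref{lem:A4_second_summation_Mb_ideals} as a sum over six-tuples of nonzero ideals $\eII'' = (\eI_1, \ldots, \eI_6)$ of the arithmetic weight $\mathcal{A}(\theta_8'(\eII'), \eI_7)$ times $V_{87}(\N\eI_1, \ldots, \N\eI_6; B)$, I would carry out the remaining summations over $\eI_1, \ldots, \eI_6$ by a single application of \cite[Proposition 7.3]{arXiv:1302.6151} with $r = 5$ (six ideal variables, one of which, $\eI_6$, plays the role of the variable $b$ in that proposition, or however the bookkeeping in Section 7 is set up), picking up the arithmetic constant $\theta_0$ and the class-number factor $(h_K/\omega_K)^6$, and converting the sum over ideals into the integral $V_{870}(B)$ over real variables $t_1, \ldots, t_6$, exactly as in \cite[Lemma 9.5, Lemma 9.9]{arXiv:1302.6151}.

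To apply \cite[Proposition 7.3]{arXiv:1302.6151} I need decay estimates for $V_{87}(t_1, \ldots, t_6; B)$ of the same shape as those used for $V_{86}$ in Lemma \ref{lem:A4_completion_Ma}. First I would bound the inner integral over $(t_7, \e_8)$: applying \cite[Lemma 3.5, (5)]{arXiv:1302.6151} to the condition \eqref{eq:A4_height_5} (with $\e_5$ appearing in the denominator and $\e_7$ as the free real variable $t_7 \ge t_6$), I expect a bound of the form
\begin{equation*}
  V_{87}(t_1, \ldots, t_6; B) \ll \frac{B}{t_1 \cdots t_6}\left(\frac{B}{t_1^3 t_2^6 t_3^4 t_4^2 t_5^5}\right)^{-1/6},
\end{equation*}
matching the first bound in the proof of Lemma \ref{lem:A4_completion_Ma} up to the role of the sixth variable. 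Second, for the complementary range I would use \eqref{eq:A4_height_1} to bound $t_7$ and \eqref{eq:A4_height_3} (or \eqref{eq:A4_height_4}) to bound $\abs{\e_8}$, giving a bound with the opposite exponent,
\begin{equation*}
  V_{87}(t_1, \ldots, t_6; B) \ll \frac{B}{t_1 \cdots t_6}\left(\frac{B}{t_1^3 t_2^6 t_3^4 t_4^2 t_5^5}\right),
\end{equation*}
so that the two estimates together control $V_{87}$ both when the ``extra'' height $B/(t_1^3 t_2^6 t_3^4 t_4^2 t_5^5)$ is large and when it is small. These are precisely the hypotheses needed to feed $V_{87}$ into \cite[Proposition 7.3]{arXiv:1302.6151}; the summation then produces $\theta_0 V_{870}(B)$ plus an error of size $O(B(\log B)^4 \log\log B)$, which is the claimed estimate.

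The main obstacle, as in the analogous lemma, is purely bookkeeping: verifying that the arithmetic function $\mathcal{A}(\theta_8'(\eII''), \eI_7)$ (after the reduction to $\vartheta'$ carried out in the proof of Lemma \ref{lem:A4_second_summation_Mb}) lies in the class required by \cite[Proposition 7.3]{arXiv:1302.6151}, and that its associated Euler product, after the six summations and the coprimality conditions encoded in the local factors $\theta_{0,\p}$ from Figure \ref{fig:A4_dynkin}, collapses to exactly $\theta_0$ as defined in \eqref{eq:def_theta_0}. Since the local factors of $\theta_0(\eII')$ were already shown to encode precisely the coprimality conditions \eqref{eq:A4_coprimality}, and since the $V_{87}$ decay estimates have the same structure as those in Lemma \ref{lem:A4_completion_Ma}, this is routine and I would simply assert the analogy with \cite[Lemma 9.5, Lemma 9.9]{arXiv:1302.6151}, just as Lemma \ref{lem:A4_completion_Ma} does.
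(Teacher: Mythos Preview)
Your overall plan---feed $V_{87}$ into \cite[Proposition 7.3]{arXiv:1302.6151} with $r=5$ after establishing a pair of decay estimates---is exactly the paper's approach. However, both of your proposed decay bounds are copied from the $V_{86}$ case without adjusting for the fact that the free real variable is now $t_7$ rather than $t_6$, and this causes genuine errors.

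For the complementary bound you write ``use \eqref{eq:A4_height_1} to bound $t_7$'', but \eqref{eq:A4_height_1} reads $\abs{\e_1^2\e_2^4\e_3^3\e_4^2\e_5^3\e_6}\le B$ and does not contain $\e_7$ at all; it cannot bound $t_7$. The correct choice is \eqref{eq:A4_height_4}, which gives $t_7\le B/(t_1t_2^2t_3^2t_4^2t_5t_6^2)$, and together with \eqref{eq:A4_height_3} for $\abs{\e_8}$ this yields
\[
V_{87}(t_1,\ldots,t_6;B)\ll \frac{B}{t_1\cdots t_6}\cdot\frac{B}{t_1^2t_2^4t_3^3t_4^2t_5^3t_6}.
\]
Note the monomial in the last factor now involves $t_6$.

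For the first bound you invoke \cite[Lemma~3.5,~(5)]{arXiv:1302.6151}, but that part is tailored to the $V_{86}$ integral, where the real integration variable enters \eqref{eq:A4_height_5} only through the cubic term $\e_3\e_4^2\e_6^3\e_7^2$. In $V_{87}$ the integration variable $t_7$ appears in \emph{both} terms of \eqref{eq:A4_height_5}, so a different estimate is needed; the paper applies \cite[Lemma~3.5,~(6)]{arXiv:1302.6151} and obtains
\[
V_{87}(t_1,\ldots,t_6;B)\ll \frac{B}{t_1\cdots t_6}\left(\frac{B}{t_1^2t_2^4t_3^3t_4^2t_5^3t_6}\right)^{-1/4},
\]
with exponent $-1/4$ and the same monomial $t_1^2t_2^4t_3^3t_4^2t_5^3t_6$ as above (which is precisely the height monomial from \eqref{eq:A4_height_1}). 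With this matched pair of bounds, \cite[Proposition~7.3]{arXiv:1302.6151} applies directly. Your claimed monomial $t_1^3t_2^6t_3^4t_4^2t_5^5$ and exponent $-1/6$ are the $V_{86}$ data and do not follow for $V_{87}$.
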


\begin{proof}
  By \cite[Lemma 3.5, (6)]{arXiv:1302.6151}, applied to \eqref{eq:A4_height_5},
  we have
  \begin{equation*}
    V_{87}(t_1, \ldots, t_6; B) \ll \frac{1}{t_5}\cdot\frac{B^{3/4}t_5^{3/4}}{t_1^{1/2}t_3^{1/4}t_4^{1/2}t_6^{3/4}}
    =\frac{B}{t_1 \cdots
      t_6}\left(\frac{B}{t_1^2t_2^4t_3^3t_4^2t_5^3t_6}\right)^{-1/4}\text.
  \end{equation*}
  Furthermore, using \eqref{eq:A4_height_3} and \eqref{eq:A4_height_4} to bound
  $\abs{\e_8}$ and $t_7$, respectively, we see that
  \begin{equation*}
    V_{87}(t_1, \ldots, t_6; B) \ll \frac{1}{t_5}\left(\frac{B}{t_1^2t_2^3t_3^2t_4t_5^2}\right)\left(\frac{B}{t_1t_2^2t_3^2t_4^2t_5t_6^2}\right)
    =\frac{B}{t_1 \cdots
      t_6}\cdot\left(\frac{B}{t_1^2t_2^4t_3^3t_4^2t_5^3t_6}\right)\text.
  \end{equation*}
  We apply \cite[Proposition 7.3]{arXiv:1302.6151} with $r=5$.
\end{proof}

\subsubsection{Combining the summations}
\begin{lemma}\label{lem:A4_completion}
  We have
  \begin{equation*}
    N_{U_2,H}(B) = \left(\frac{2}{\sqrt{|\Delta_K|}}\right)^8 \left(\frac{h_K}{\omega_K}\right)^6 \theta_0V_0(B) + O(B(\log B)^4\log \log B),
  \end{equation*}
  where $\theta_0$ is given in \eqref{eq:def_theta_0} and
  \begin{equation*}
    V_0(B) := \int\limits_{\substack{(\e_1, \ldots, \e_8) \in \mathcal{R}(B)\\\abs{\e_1}, \ldots, \abs{\e_7} \ge
        1}}\frac{1}{\abs{\e_5}}\dd \e_1 \cdots \dd \e_8,
  \end{equation*}
 with complex variables $\e_1, \ldots, \e_8$.
\end{lemma}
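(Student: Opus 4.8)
The plan is to combine the two asymptotic expansions for $N_{86}(B)$ and $N_{87}(B)$ provided by Lemma~\ref{lem:A4_completion_Ma} and Lemma~\ref{lem:A4_completion_Mb}, using the decomposition \eqref{eq:A4_split_summation}. Since both lemmas already yield a main term of the shape $\pi^6(2/\sqrt{|\Delta_K|})^8(h_K/\omega_K)^6\theta_0 V(B)$ with the same constant prefactor, it suffices to show that $V_{860}(B) + V_{870}(B)$ equals $V_0(B)$ up to an admissible error of size $O(B(\log B)^4\log\log B)$. The natural first step is therefore to unwind the definitions of $V_{860}$ and $V_{870}$ in terms of $V_{86}$ and $V_{87}$, and then the definitions of the latter in terms of integrals of $\dd t_6 \dd \e_8$ (resp.\ $\dd t_7 \dd \e_8$) over the region cut out by $(\sqrt{t_1}, \ldots, \sqrt{t_7}, \e_8) \in \mathcal{R}(B)$ together with one of the inequalities $t_6 > t_7$ or $t_7 \geq t_6$.

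The second step is the change of variables $t_j = \abs{\e_j}$ for $j \in \{1,\dots,7\}$, passing from the real integration variables $t_1, \ldots, t_7$ back to complex variables $\e_1, \ldots, \e_7$ with the usual Jacobian factor $\pi$ for each variable; this is exactly the polar-coordinate manoeuvre used in \cite[Lemma 9.5, Lemma 9.9]{arXiv:1302.6151} and carried out repeatedly above. Under this substitution, $V_{860}(B)$ and $V_{870}(B)$ become integrals over $(\e_1, \ldots, \e_8) \in \mathcal{R}(B)$ with $\abs{\e_1}, \ldots, \abs{\e_7} \ge 1$ and with the extra condition $\abs{\e_6} > \abs{\e_7}$ (resp.\ $\abs{\e_7} \ge \abs{\e_6}$), the integrand being $1/\abs{\e_5}$ in both cases; note the constant $\pi^6$ coming from the six substituted variables matches the $\pi^6$ already present on the right-hand side of Lemmas~\ref{lem:A4_completion_Ma} and~\ref{lem:A4_completion_Mb}. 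Adding the two complementary regions $\abs{\e_6} > \abs{\e_7}$ and $\abs{\e_7} \ge \abs{\e_6}$ reconstitutes the full region $\mathcal{D}_0(B)$ defining $V_0(B)$, up to the measure-zero locus $\abs{\e_6} = \abs{\e_7}$, which contributes nothing. Combining this with \eqref{eq:A4_split_summation} and absorbing its $O(B(\log B)^3)$ into the larger error term yields the claim.

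I do not expect a serious obstacle here: the only points requiring a little care are the bookkeeping of the constant $\pi^6$ (six polar substitutions, since $\e_8$ stays complex throughout), and checking that the height region $\mathcal{R}(B)$ is independent of the chosen order of summation, so that the two pieces genuinely glue along $\abs{\e_6} = \abs{\e_7}$. The hardest conceptual work — producing the explicit asymptotic evaluations of $V_{860}$ and $V_{870}$ together with the error terms — has already been done in Lemmas~\ref{lem:A4_completion_Ma} and~\ref{lem:A4_completion_Mb}, so this lemma is essentially an assembly step.
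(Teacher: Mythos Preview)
Your approach is correct and matches the paper's proof, which simply cites \eqref{eq:A4_split_summation}, Lemma~\ref{lem:A4_completion_Ma}, Lemma~\ref{lem:A4_completion_Mb}, and the polar-coordinate argument of \cite[Lemma~9.9]{arXiv:1302.6151}. One small correction to your bookkeeping: there are \emph{seven} polar substitutions (for $\e_1,\dots,\e_7$), not six, yielding a factor $\pi^7$; this matches because $V_{86}$ and $V_{87}$ each already carry a built-in factor of $\pi$, so $\pi^6(V_{860}(B)+V_{870}(B))=V_0(B)$ exactly, with no error term needed at this step.
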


\begin{proof}
  This follows from \eqref{eq:A4_split_summation}, Lemma
  \ref{lem:A4_completion_Ma}, and Lemma \ref{lem:A4_completion_Mb},
  using polar coordinates, similarly to \cite[Lemma 9.9]{arXiv:1302.6151}.
\end{proof}

\subsection{Proof of Theorem \ref{thm:main} for $S_2$}
We use the conditions
\begin{align}
  &\abs{\e_1^2\e_2^4\e_4^2\e_5^3\e_6}\leq B\text{ and }\label{eq:A4_comparison_1}\\
  &\abs{\e_1^2\e_2^4\e_4^2\e_5^3\e_6}\leq B \text{ and
  }\abs{\e_1^{-1}\e_2^{-2}\e_4^{2}\e_5^{-3}\e_6^{4}}\le
  B.\label{eq:A4_comparison_2}
\end{align}
\begin{lemma}\label{lem:A4_predicted_volume}
  Let $\alpha(\tS_2)$, $\omega_\infty(\tS_2)$ be as in Theorem \ref{thm:main}
  and $\mathcal{R}(B)$ as in \eqref{eq:A4_height_1}--\eqref{eq:A4_height_5}. Define
  \begin{equation*}
    V_0'(B) := \int_{\substack{(\e_1, \ldots, \e_8)\in\mathcal{R}(B)\\\abs{\e_1}, \abs{\e_2}, \abs{\e_4}, \abs{\e_5},
      \abs{\e_6}\geq 1\\\eqref{eq:A4_comparison_2}}}\frac{1}{\abs{\e_5}} \dd
  \e_1 \cdots \dd \e_8,
  \end{equation*}
  where $\e_1$, $\ldots$, $\e_8$ are complex variables. Then
  \begin{equation}\label{eq:A4_predicted_volume}
    \pi^6\alpha(\tS_2) \omega_\infty(\tS_2) B(\log B)^5 = 4 V_0'(B).
  \end{equation}
\end{lemma}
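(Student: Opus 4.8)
The plan is to mimic the proof of Lemma \ref{lem:A3+A1_predicted_volume} very closely: produce two separate formulas, one expressing $\omega_\infty(\tS_2)$ as an integral over the ``torsor variables'' $\e_3,\e_7,\e_8$ (with $\e_1,\e_2,\e_4,\e_5,\e_6$ and $B$ fixed), and one expressing $\alpha(\tS_2)(\log B)^5$ as an integral over $\e_1,\e_2,\e_4,\e_5,\e_6$; then substitute both into the target identity \eqref{eq:A4_predicted_volume} and recognize the result as $V_0'(B)$. First I would fix $\e_1,\e_2,\e_4,\e_5,\e_6\in\CC\smallsetminus\{0\}$ and $B>0$, and in the defining integral for $\omega_\infty(\tS_2)$ introduce a scaling parameter, say $l:=(B\abs{\e_1^2\e_2^3\e_3^2\e_4\e_5^2}\cdot(\dots))^{1/?}$, chosen so that the three monomials appearing as the $z_i$ match the shape of the parameterization \eqref{eq:A4_parameterization}; concretely I would set $z_0=l^{-1/3}\cdot(\text{monomial})\cdot\e_8$, $z_1=l^{-1/3}\cdot(\text{monomial})\cdot(\text{one of }\e_3,\e_7)$, and $z_2=l^{-1/3}(\dots)$ so that the five height conditions $\abs{z_0^3},\abs{z_0z_2z_3},\abs{z_0^2z_2},\abs{z_0^2z_3},\abs{z_3(z_2^2+z_0z_3)}\le 1$ transform exactly into \eqref{eq:A4_height_1}--\eqref{eq:A4_height_5} defining $\mathcal{R}(B)$. (Note that the integrand of $\omega_\infty(\tS_2)$ is written with variables $\dd z_0\dd z_1\dd z_2$ though the third slot in the constraints is $z_3$; I would treat this as the same convention used for $\tS_1$, i.e. the ``middle'' torsor coordinate that gets eliminated.) Computing the Jacobian of this linear-in-$(\e_3,\e_7,\e_8)$ substitution yields a factor of the form $\abs{(\text{monomial in }\e_1,\e_2,\e_4,\e_5,\e_6)}/B$ times $1/\abs{\e_5}$, giving an identity analogous to \eqref{eq:A3+A1_complex_density_torsor}, namely
\begin{equation*}
  \omega_\infty(\tS_2) = \frac{12}{\pi}\,\frac{\abs{\e_1^a\e_2^b\e_4^c\e_5^d\e_6^e}}{B}\int_{(\e_1,\ldots,\e_8)\in\mathcal{R}(B)}\frac{1}{\abs{\e_5}}\,\dd\e_3\,\dd\e_7\,\dd\e_8,
\end{equation*}
where the exponents $a,\dots,e$ are read off from the chosen $l$.

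Next I would compute $\alpha(\tS_2)$. The negative curves $[E_1],\dots,[E_7]$ generate the effective cone of $\tS_2$; I would write $[-K_{\tS_2}]$ and $[E_7]$ (which is the non-generator-side class, analogous to $[E_7]$ for $\tS_1$) in terms of $[E_1],[E_2],[E_4],[E_5],[E_6]$ using the intersection data / Cox ring data from \cite{math.AG/0604194}. One expects $[-K_{\tS_2}]=[2E_1+4E_2+2E_4+3E_5+E_6]$ (matching the exponents in \eqref{eq:A4_height_1}) and $[E_3],[E_7]$ expressed so that conditions \eqref{eq:A4_comparison_2} come out as the two inequalities cutting out the relevant face; then \cite[Lemma 8.1]{arXiv:1302.6151} (with a suitable relabeling of variables) gives
\begin{equation*}
  \alpha(\tS_2)(\log B)^5 = \frac{1}{3\pi^5}\int_{\substack{\abs{\e_1},\abs{\e_2},\abs{\e_4},\abs{\e_5},\abs{\e_6}\ge 1\\\eqref{eq:A4_comparison_2}}}\frac{\dd\e_1\,\dd\e_2\,\dd\e_4\,\dd\e_5\,\dd\e_6}{\abs{\e_1\e_2\e_4\e_5\e_6}}.
\end{equation*}
I would double-check the constant $\tfrac{1}{3\pi^5}$ and the value $\alpha(\tS_2)=\tfrac{1}{21600}$ against \cite{MR2543667} and against the shape of the dual cone; since $\rho=6$ here the relevant integral is over $5$ of the $6$ generator-directions, consistent with $(\log B)^5$.

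Finally, I would substitute the two displayed identities into \eqref{eq:A4_predicted_volume}: the $\pi^6$, the $\tfrac{12}{\pi}$, and the $\tfrac1{3\pi^5}$ combine to the numerical factor $4$, the monomial Jacobian factor $\abs{\e_1^a\e_2^b\e_4^c\e_5^d\e_6^e}/B$ from the first identity cancels against the $1/\abs{\e_1\e_2\e_4\e_5\e_6}$ weight and the stray $B$ from the second (after noting $a-1,b-1,\dots$ are the ``correct'' exponents making everything homogeneous), leaving exactly the integral defining $V_0'(B)$ over the region $\mathcal{R}(B)\cap\{\abs{\e_1},\abs{\e_2},\abs{\e_4},\abs{\e_5},\abs{\e_6}\ge 1\}\cap\eqref{eq:A4_comparison_2}$ with weight $1/\abs{\e_5}$. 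The main obstacle I anticipate is purely bookkeeping: choosing the exponent in $l$ and the three monomial prefactors in the $z_i$-substitution so that (i) all five inequalities of $\mathcal{R}(B)$ are reproduced on the nose — in particular the ``mixed'' condition \eqref{eq:A4_height_5} coming from $\abs{z_3(z_2^2+z_0z_3)}\le1$ forces the relation in the torsor equation \eqref{eq:A4_torsor} to appear correctly — and (ii) the resulting Jacobian monomial, divided by $\abs{\e_1\e_2\e_4\e_5\e_6}$, telescopes to the weight $1/\abs{\e_5}$ with no leftover powers of $B$ other than the single $B$ that is absorbed. Getting the arithmetic of these exponents exactly right, so that the constant is genuinely $4$ and not some other rational multiple, is where care is needed; everything else is a routine transcription of the $S_1$ argument with the roles of the variables permuted according to Figure~\ref{fig:A4_dynkin}.
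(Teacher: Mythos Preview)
Your plan matches the paper's proof: a linear substitution in $(\e_3,\e_7,\e_8)$ converts $\omega_\infty(\tS_2)$ to an $\mathcal{R}(B)$-integral, \cite[Lemma~8.1]{arXiv:1302.6151} handles $\alpha(\tS_2)(\log B)^5$, and the two are multiplied. The concrete bookkeeping you leave open is $l=(B\abs{\e_1\e_2^2\e_4\e_5^3\e_6^2})^{1/2}$ (note: no $\e_3$) with $z_0=l^{-1/3}\e_1\e_2^2\e_4\e_5^2\e_6\cdot\e_3$, $z_2=l^{-1/3}\e_1\e_2\e_5\cdot\e_8$, $z_3=l^{-1/3}\e_4\e_6^2\cdot\e_7$ (so $z_0\leftrightarrow\e_3$, not $\e_8$), giving Jacobian factor $\abs{\e_1\e_2\e_4\e_5\e_6}/B$ with all exponents equal to $1$ as you anticipated; on the cone side $[-K_{\tS_2}]=[2E_1+4E_2+3E_3+2E_4+3E_5+E_6]$ (your tentative formula dropped the $3E_3$) and $[E_7]=[E_1+2E_2+E_3+2E_5-E_6]$.
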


\begin{proof}
  The proof is analogous to the proof of Lemma
  \ref{lem:A3+A1_predicted_volume}. Let $\e_1, \e_2, \e_4, \e_5, \e_6
  \in \CC$, $B > 0$, and let $l :=
  (B\abs{\e_1\e_2^2\e_4\e_5^3\e_6^2})^{1/2}$. Let $\e_3, \e_7, \e_8$ be
  complex variables. Applying the coordinate transformation $z_0 =
  l^{-1/3}\e_1\e_2^2\e_4\e_5^2\e_6\cdot \e_3$, $z_2 = l^{-1/3}\e_1\e_2\e_5
  \cdot \e_8$, $z_3 = l^{-1/3}\e_4\e_6^2 \cdot\e_7$ to $\omega_\infty(\tS_2)$,
  we obtain
  \begin{equation}\label{eq:A4_complex_density_torsor}
    \omega_\infty(\tS_2) = \frac{12}{\pi}\frac{\abs{\e_1\e_2\e_4\e_5\e_6}}{B} \int_{(\e_1, \ldots, \e_8)\in\mathcal{R}(B)}\frac{1}{\abs{\e_5}}\dd \e_3 \dd \e_7 \dd \e_8\text.
  \end{equation}

  The negative curves $[E_1], \dots, [E_7]$ generate the effective cone of
  $\tS_1$. Because of $[-K_{\tS_1}] = [2E_1+4E_2+3E_3+2E_4+3E_5+E_6]$ and $[E_7]
  = [E_1+2E_2+E_3+2E_5-E_6]$, \cite[Lemma 8.1]{arXiv:1302.6151} implies
  \begin{equation}\label{eq:A4_alpha}
      \alpha(\tS_2)(\log B)^5=\frac 1{3\pi^5} \int_{\substack{\abs{\e_1}, \abs{\e_2},
          \abs{\e_4}, \abs{\e_5}, \abs{\e_6}\geq 1\\\eqref{eq:A4_comparison_2}}}
      \frac{\dd \e_1\dd \e_2\dd \e_4\dd \e_5\dd
        \e_6}{\abs{\e_1\e_2\e_4\e_5\e_6}}\text.
  \end{equation}
  The lemma follows by substituting
  \eqref{eq:A4_complex_density_torsor} and \eqref{eq:A4_alpha} in
  \eqref{eq:A4_predicted_volume}.
\end{proof}

To finish our proof, we compare $V_0(B)$ from Lemma
\ref{lem:A4_completion} with $V_0'(B)$ defined in Lemma
\ref{lem:A4_predicted_volume}. Let 
\begin{align*}
  \mathcal{D}_0(B) &:= \{(\e_1, \ldots, \e_8) \in \mathcal{R}(B) \mid \abs{\e_1},\ldots,\abs{\e_7} \geq 1\},\\
  \mathcal{D}_1(B) &:= \{(\e_1, \ldots, \e_8) \in \mathcal{R}(B) \mid \abs{\e_1},\ldots,\abs{\e_7} \geq 1\text{, }\eqref{eq:A4_comparison_1}\},\\
  \mathcal{D}_2(B) &:= \{(\e_1, \ldots, \e_8) \in \mathcal{R}(B) \mid \abs{\e_1},\ldots,\abs{\e_7} \geq 1\text{, }\eqref{eq:A4_comparison_2}\},\\
  \mathcal{D}_3(B) &:= \{(\e_1, \ldots, \e_8) \in \mathcal{R}(B) \mid \abs{\e_1},\ldots,\abs{\e_6} \geq 1\text{, }\eqref{eq:A4_comparison_2}\},\\
  \mathcal{D}_4(B) &:= \{(\e_1, \ldots, \e_8) \in \mathcal{R}(B) \mid \abs{\e_1},
  \!\abs{\e_2},\!\abs{\e_4},\!\abs{\e_5},\!\abs{\e_6} \geq 1\text{, }\eqref{eq:A4_comparison_2}\}.
\end{align*}
Moreover, let
\begin{equation*}
  V_i(B) := \int_{\mathcal{D}_i(B)}\frac{\dd \e_1 \cdots \dd \e_8}{\abs{\e_5}}\text.
\end{equation*}
Then clearly $V_0(B)$ is as in Lemma \ref{lem:A4_completion} and
$V_4(B) = V_0'(B)$. We show that, for $1 \leq i \leq 4$, $V_i(B) -
V_{i-1}(B) = O(B(\log B)^4)$. This holds for $i = 1$, since $R_1 =
R_0$. Moreover, using \cite[Lemma 3.5, (4)]{arXiv:1302.6151} and \eqref{eq:A4_height_5} to
bound the integral over $\e_7$ and $\e_8$, we have
\begin{equation*}
  V_2(B) - V_1(B) \ll \int_{\substack{\abs{\e_1},\ldots,\abs{\e_6}\geq 1\\\abs{\e_1\e_2^2\e_3^2\e_4^2\e_5\e_6^2}\leq B\\\abs{\e_1^{-1}\e_2^{-2}\e_4^2\e_5^{-3}\e_6^4} > B}}\frac{B^{3/4}}{\abs{\e_1^2\e_3\e_4^2\e_5\e_6^{3}}^{1/4}}\dd \e_1 \cdots \dd \e_6 \ll B(\log B)^4\text.
\end{equation*}
Using \cite[Lemma 3.5, (2)]{arXiv:1302.6151} and the \eqref{eq:A4_height_5} to
bound the integral over $\e_8$, we obtain
\begin{equation*}
  V_3(B) - V_2(B) \ll \int\limits_{\substack{\abs{\e_1},\ldots,\abs{\e_6}\geq 1\\\abs{\e_7} < 1\text{,
      }\eqref{eq:A4_height_1}\text{, }\eqref{eq:A4_comparison_2}}}\frac{B^{1/2}}{\abs{\e_1\e_5\e_7}^{1/2}}\dd \e_1 \cdots \dd \e_7 \ll B(\log B)^4\text.  
\end{equation*}
Finally, using using \cite[Lemma 3.5, (4)]{arXiv:1302.6151} and
\eqref{eq:A4_height_5} to bound the integral over $\e_7$ and $\e_8$, we have
\begin{equation*}
  V_4(B) - V_3(B) \ll\hspace{-1.3cm} \int\limits_{\substack{\abs{\e_1},\abs{\e_2},\abs{\e_4},\abs{\e_5},\abs{\e_6}\geq 1\\\abs{\e_3} < 1\text{, }\eqref{eq:A4_comparison_1}}}\frac{B^{3/4}}{\abs{\e_1^2\e_3\e_4^2\e_5\e_6^3}^{1/4}}\dd \e_1 \cdots \dd \e_6 \ll B(\log B)^4\text.
\end{equation*}
Using Lemma~\ref{lem:A4_completion} and Lemma~\ref{lem:A4_predicted_volume},
this implies Theorem \ref{thm:main} for $S_2$.

\section{The quartic del Pezzo surface of type $\Dfour$}

\subsection{Passage to a universal torsor}
We use the notation from \cite{math.AG/0604194}.

\begin{figure}[ht]
  \centering
  \[\xymatrix@R=0.05in @C=0.05in{E_8  \ar@{-}[r]\ar@{-}[dr] \ar@{=}[dd]& \li{5} \ar@{-}[r] & \ex{3} \ar@{-}[dr]\\
  & E_7 \ar@{-}[r] & \ex{2} \ar@{-}[r] & \ex{1} \\
  E_9 \ar@{-}[r] \ar@{-}[ur] & \li{6} \ar@{-}[r] & \ex{4} \ar@{-}[ur]}\]
  \caption{Configuration of curves on $\tS_3$}
  \label{fig:D4_dynkin}
\end{figure}

For any given $\classtuple = (C_0, \dots, C_5) \in \classrep^6$, we
define $u_\classtuple := \N(C_0^3 C_1^{-1}\cdots C_5^{-1})$ and
\begin{align*}
    \OO_1 &:= C_2C_3^{-1} & \OO_2 &:= C_1C_2^{-1} & \OO_3 &:= C_0C_1^{-1}C_2^{-1}C_5^{-1}\\
    \OO_4 &:= C_3C_4^{-1} & \OO_5 &:= C_5 & \OO_6 &:= C_4 \\
    \OO_7 &:= C_0 C_1^{-1} & \OO_8 &:= C_0 C_5^{-1} & \OO_9 &:= C_0^2
    C_1^{-1} C_2^{-1} C_3^{-1} C_4^{-1}
\end{align*}
Let
\begin{equation*}
  \OO_{j*} :=
  \begin{cases}
    \OO_j^{\neq 0}, & j \in \{1,\ldots, 6\},\\
    \OO_j, & j \in \{7,8,9\}.
  \end{cases}
\end{equation*}
For $\eta_j \in \OO_j$, let
\begin{equation*}
  \eI_j := \e_j \OO_j^{-1}\text.
\end{equation*}
For $B \geq 0$, let $\mathcal{R}(B)$ be the set of all $(\e_1, \ldots, \e_8)
\in \CC^8$ with $\e_4\e_6 \neq 0$ and
\begin{align}
  \abs{\e_1^2\e_2\e_3^2\e_4\e_5^2\e_8}&\leq B,\label{eq:D4_height_1}\\
  \abs{\e_1^4\e_2^2\e_3^3\e_4^3\e_5^2\e_6^2}&\leq B,\label{eq:D4_height_2}\\
  \abs{\e_1^3\e_2^2\e_3^2\e_4^2\e_5\e_6\e_7}&\leq B,\label{eq:D4_height_3}\\
  \abs{\e_1^2\e_2\e_3^2\e_4\e_5^2\e_8 + \e_1^2\e_2^2\e_3\e_4\e_7^2}&\leq B,\label{eq:D4_height_4}\\
  \abs{\frac{\e_3\e_5^2\e_8^2+\e_2\e_7^2\e_8}{\e_4\e_6^2}}&\leq B,\label{eq:D4_height_5}
\end{align}
and let $M_\classtuple(B)$ be the set of all
\begin{equation*}
  (\e_1, \ldots, \e_9) \in \OO_{1*} \times \cdots \times \OO_{9*} 
\end{equation*}
that satisfy the \emph{height conditions}
\begin{equation*}\label{eq:D4_height}
  (\e_1, \ldots, \e_8) \in \mathcal{R}(u_\classtuple B)\text,
\end{equation*}
the \emph{torsor equation}
\begin{equation}\label{eq:D4_torsor}
  \e_2\e_7^2+\e_3\e_5^2\e_8 + \e_4\e_6^2\e_9 = 0,
\end{equation}
and the \emph{coprimality conditions}
\begin{equation}\label{eq:D4_coprimality}
  \eI_j + \eI_k = \OO_K \text{ for all distinct nonadjacent vertices $E_j$, $E_k$ in Figure~\ref{fig:D4_dynkin}.}
\end{equation}

\begin{lemma}\label{lem:D4_passage_to_torsor}
  We have
  \begin{equation*}
    N_{U_3,H}(B) = \frac{1}{\omega_K^6}\sum_{\classtuple \in \classrep^6}|M_\classtuple(B)|.
  \end{equation*}
\end{lemma}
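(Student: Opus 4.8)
The plan is to proceed exactly as in the previous two cases (Lemmas \ref{lem:A3+A1_passage_to_torsor} and \ref{lem:A4_passage_to_torsor}): the statement is a specialization of \cite[Claim 4.1]{arXiv:1302.6151}, and it suffices to verify the hypotheses of the general machinery of \cite[Section 4]{arXiv:1302.6151} for a suitable chain of blow-ups realizing the minimal desingularization $\pi\colon\tS_3\to S_3$. First I would record the projection $\phi\colon S_3\rto\PP^2_K$, $(x_0:\cdots:x_4)\mapsto(x_0:x_2:x_3)$ and its inverse $\psi = \pi\circ\rho^{-1}$, reading off the parameterization from \eqref{eq:def_D4}; here the image is $(y_0y_1^2 : \cdots)$ with the fifth coordinate involving $y_0y_1+y_2^2$, matching the structure of the height polynomials \eqref{eq:D4_height_1}--\eqref{eq:D4_height_5} and the torsor equation \eqref{eq:D4_torsor}. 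One identifies the four initial curves in $\PP^2_K$ (corresponding to $E_8^{(0)}$, $E_5^{(0)}$, $E_9^{(0)}$ and one more, read off from the configuration in Figure \ref{fig:D4_dynkin}), and then the sequence of five point blow-ups that produces $E_7$, $E_1$, $E_2$, $E_3$, $E_4$ in the order dictated by the $\Dfour$ configuration — note the double edge $E_8\!=\!E_9$ in the figure signals that the central exceptional curve $E_7$ has a non-reduced intersection behaviour, so the first blow-up is at a point lying on three of the initial curves.

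Next I would go through \cite[Claim 4.2]{arXiv:1302.6151} inductively along this chain, invoking \cite[Lemma 4.3]{arXiv:1302.6151} at the start and \cite[Lemma 4.4]{arXiv:1302.6151} at each step where the blown-up point lies on only two previously constructed curves. As in the $\Afour$ case, at the steps where the blown-up point lies on three curves (in particular the first blow-up giving $E_7$, and possibly the one giving $E_2$), we are in the situation of \cite[Remark 4.5]{arXiv:1302.6151}: the general lemma does not directly supply all the pairwise coprimality conditions among the three relevant ideals, and one must instead derive the missing ones from the torsor equation \eqref{eq:D4_torsor}. Concretely, if $\p$ divided two of the three ideals but not the third, then \eqref{eq:D4_torsor} would force $\p$ to divide the third as well, contradicting the construction; and the mixed conditions of the form $\eI_j+\eI_k=\OO_K$ for the remaining nonadjacent pairs follow by combining the torsor relation with a coprimality already guaranteed by \cite[Lemma 4.4]{arXiv:1302.6151}. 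This is the step I expect to require the most care, since one must track precisely which coprimality conditions from \eqref{eq:D4_coprimality} are automatic and which need the equation, and the bookkeeping of the ideal classes $C_i$ (via the choice of $C_1,\dots$ so that the relevant sum of ideals becomes principal) has to be done consistently.

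Finally, once \cite[Claim 4.2]{arXiv:1302.6151} is established for $i=0$, the passage to \cite[Claim 4.1]{arXiv:1302.6151} — and hence to the asserted formula $N_{U_3,H}(B) = \omega_K^{-6}\sum_{\classtuple}|M_\classtuple(B)|$ — is obtained exactly as in \cite[Lemma 9.1]{arXiv:1302.6151}, collecting the height conditions into $\mathcal{R}(u_\classtuple B)$ via the explicit map $\Psi$ (whose components are the monomials on the left-hand sides of \eqref{eq:D4_height_1}--\eqref{eq:D4_height_3} together with the two combinations appearing in \eqref{eq:D4_height_4}, \eqref{eq:D4_height_5}), and accounting for the units by the factor $1/\omega_K^6$. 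Since the only genuinely case-specific work is the derivation of coprimality conditions at the triple-point blow-ups, the write-up can be kept short by citing the $\Afour$ argument as a template and spelling out only the new instances.
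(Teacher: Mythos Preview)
Your overall strategy is exactly the paper's: specialize \cite[Claim 4.1]{arXiv:1302.6151} via \cite[Claim 4.2]{arXiv:1302.6151}, using \cite[Lemma 4.3]{arXiv:1302.6151} at the start, \cite[Lemma 4.4]{arXiv:1302.6151} at the ordinary blow-ups, and \cite[Remark 4.5]{arXiv:1302.6151} at the triple-point blow-ups, where the missing coprimality conditions are extracted from the torsor equation. The final passage to \cite[Claim 4.1]{arXiv:1302.6151} as in \cite[Lemma 9.1]{arXiv:1302.6151} is also correct.

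However, your concrete setup is wrong in several places, and executing it as stated would not yield the height conditions \eqref{eq:D4_height_1}--\eqref{eq:D4_height_5} or the correct map $\Psi$. First, the projection is $\phi:(x_0:\cdots:x_4)\mapsto(x_0:x_1:x_2)$, not $(x_0:x_2:x_3)$; with the correct choice one gets $\psi(y_0:y_1:y_2)=(y_0y_1^2:y_1^3:y_1^2y_2:-y_1(y_0y_1+y_2^2):-y_0(y_0y_1+y_2^2))$. Second, the four initial curves in $\PP^2_K$ are $E_3^{(0)}=\{y_1=0\}$, $E_7^{(0)}=\{y_2=0\}$, $E_8^{(0)}=\{y_0=0\}$, $E_9^{(0)}=\{-y_0y_1-y_2^2=0\}$; in particular $E_7$ is an \emph{initial} curve, not the result of the first blow-up, and $E_5$ is not among the initial curves. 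Third, the five blow-ups produce, in order, $E_2,E_1,E_4,E_6,E_5$ (not $E_7,E_1,E_2,E_3,E_4$): one blows up $E_3\cap E_7\cap E_9$ to get $E_2$, then $E_2\cap E_3\cap E_9$ to get $E_1$, then $E_1\cap E_9$ to get $E_4$, then $E_4\cap E_9$ to get $E_6$, and finally $E_3\cap E_8$ to get $E_5$. Consequently the two steps requiring \cite[Remark 4.5]{arXiv:1302.6151} are those producing $E_2$ and $E_1$; up to relabelling these are precisely the two steps handled in Lemma~\ref{lem:A4_passage_to_torsor}, so one may simply quote that argument rather than redoing the coprimality check. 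With these corrections your plan goes through verbatim.
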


\begin{proof}
  Again, the lemma is a specialization of \cite[Claim 6.1]{arXiv:1302.6151},
  and we prove it in an analogous way as Lemma
  \ref{lem:A4_passage_to_torsor}. Starting with the curves
  Let $E_3^{(0)} := \{y_1=0\}$, $E_7^{(0)} := \{y_2=0\}$, $E_8^{(0)} := \{y_0 = 0\}$,
  $E_9^{(0)} := \{-y_0y_1-y_2^2 = 0\}$ in $\PP_K^2$, we prove \cite[Claim
  6.2]{arXiv:1302.6151} for the following sequence of blow-ups:
  \begin{enumerate}
  \item blow up $E_3^{(0)}\cap E_7^{(0)}\cap E_9^{(0)}$, giving $E_2^{(1)}$,
  \item blow up $E_2^{(1)}\cap E_3^{(1)}\cap E_9^{(1)}$, giving $E_1^{(2)}$,
  \item blow up $E_1^{(2)}\cap E_9^{(2)}$, giving $E_4^{(3)}$,
  \item blow up $E_4^{(3)} \cap E_9^{(3)}$, giving $E_6^{(4)}$,
  \item blow up $E_3^{(4)} \cap E_8^{(4)}$, giving $E_5^{(5)}$.
  \end{enumerate}
  
  The inverse $\pi \circ \rho^{-1} : \PP^2_K \rto S_3$ of the projection $\rho \circ \pi^{-1} :
  S_3 \rto \PP^2_K$, $(x_0 : \cdots : x_4) \mapsto (x_0 : x_1
  : x_2)$ is given by
  \begin{equation*}
    (y_0 : y_1 : y_2) \mapsto (y_0 y_1^2 : y_1^3 : y_1^2y_2 : -y_1(y_0 y_1 + y_2^2) : -y_0(y_0 y_1 + y_2^2))\text.
  \end{equation*}
  With the map $\Psi$ from \cite[Claim 4.2]{arXiv:1302.6151} sending $(\e_1,
  \ldots, \e_9)$ to 
  \begin{equation*}
    (\e_1^2\e_2\e_3^2\e_4\e_5^2\e_8, \e_1^4\e_2^2\e_3^3\e_4^3\e_5^2\e_6^2, \e_1^3\e_2^2\e_3^2\e_4^2\e_5\e_6\e_7, \e_1^2\e_2\e_3\e_4^2\e_6^2\e_9, \e_8\e_9),
  \end{equation*}
  we see that the assumptions of \cite[Lemma 4.3]{arXiv:1302.6151} are
  satisfied, so \cite[Claim 4.2]{arXiv:1302.6151} holds for $i=0$.
  
  In the first two steps of the above chain of blow-ups, we are in the
  situation of \cite[Remark 4.5]{arXiv:1302.6151}, so certain coprimality
  conditions need to be checked by hand. However, up to changing some indices,
  our situation in steps (1) and (2) is exactly the same as in Lemma
  \ref{lem:A4_passage_to_torsor}, so the arguments given there apply to our
  lemma as well. Steps (3), (4), (5) are again covered by
  \cite[Lemma~4.4]{arXiv:1302.6151}, which proves \cite[Claim
  6.2]{arXiv:1302.6151}. From this, we deduce \cite[Claim 6.1]{arXiv:1302.6151}
  as in \cite[Lemma 9.1]{arXiv:1302.6151}.
\end{proof}

\subsection{Summations}
\subsubsection{The first summation over $\e_8$ with dependent $\e_9$}
\begin{lemma}\label{lem:D4_first_summation}
  Let $\ee' := (\e_1, \ldots, \e_7)$ and $\eII' := (\eI_1, \ldots, \eI_7)$. Then
  \begin{equation*}
    |M_\classtuple(B)| = \frac{2}{\sqrt{|\Delta_K|}} \sum_{\ee' \in \OO_{1*} \times \dots \times \OO_{7*}} \theta_8(\eII')V_8(\N\eI_1. \ldots, \N\eI_7; B) + O_\classtuple(B(\log B)^2),
  \end{equation*}
  where
  \begin{equation*}
    V_8(t_1, \ldots, t_7; B) :=  \frac{1}{t_4 t_6^2}\int_{(\sqrt{t_1}, \ldots,
      \sqrt{t_7},\e_8) \in \mathcal{R}(B)} \dd \e_8
  \end{equation*}
  and
  \begin{equation*}
    \theta_8(\eII') := \prod_{\id p} \theta_{1,\id p}(J_{\id p}(\eII')). 
  \end{equation*}
  Here, $J_{\id p}(\eII') := \{j \in \{1, \dots, 7\}\ :\ \id p \mid
  \eI_j\}$ and
  \begin{equation*}
    \theta_{1,\id p}(J) :=
    \begin{cases}
      1 &\text{ if }J = \emptyset,\{5\},\{6\},\{7\},\\
      1-\frac{1}{\N\id p} &\text{ if }J = \{2\},\{3\},\{4\},\{1,2\},\{1,3\},\{1,4\},\{2,7\},\{3,5\},\{4,6\},\\
      1-\frac{2}{\N\id p} &\text{ if }J = \{1\},\\
      0 &\text{otherwise.}
    \end{cases}
  \end{equation*}
\end{lemma}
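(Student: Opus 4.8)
The plan is to deduce the lemma from \cite[Proposition~5.3]{arXiv:1302.6151} in exactly the way Lemma~\ref{lem:A3+A1_first_summation} was deduced from it. In the torsor equation \eqref{eq:D4_torsor} the variable $\e_1$ does not appear, and $\e_8$, $\e_9$ appear only linearly; summing over $\e_8$ with $\e_9$ then uniquely determined, the summation variable enters linearly, so the arithmetic function is a plain Euler product of local densities (as for $S_1$) rather than the more delicate kind occurring for $S_2$, where $\e_8$ is squared. Concretely, one applies \cite[Proposition~5.3]{arXiv:1302.6151} with $u_\classtuple B$ in place of $B$, reading off from \eqref{eq:D4_torsor} the $A$-part $\e_2\e_7^2$, the $B$-part $\e_3\e_5^2\cdot\e_8$ and the $C$-part $\e_4\e_6^2\cdot\e_9$, taking $D=1$ (the index of the missing variable $\e_1$), and choosing $\Ao$, $\At$ as in \cite[Remark~5.2]{arXiv:1302.6151}.

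The first task is to check that the main term produced by \cite[Proposition~5.3]{arXiv:1302.6151} is the stated one. This is the same bookkeeping as in \cite[Lemma~9.2]{arXiv:1302.6151}: the coefficient monomial $\e_4\e_6^2$ of $\e_9$ yields the factor $1/(t_4t_6^2)$ in $V_8$, and the inclusion--exclusion over common prime divisors of the ideals $\eI_j$, restricted by the non-adjacency relations in Figure~\ref{fig:D4_dynkin} and the conditions \eqref{eq:D4_coprimality}, reproduces exactly the local factors $\theta_{1,\p}$; in particular the value $1-2/\N\p$ at $J=\{1\}$ reflects that $\e_1$ is the variable absent from \eqref{eq:D4_torsor}, just as $\e_3$ was in Lemma~\ref{lem:A3+A1_first_summation}.

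The second task is to bound the error term. As in \cite[Lemma~9.2]{arXiv:1302.6151}, for fixed $\ee'$ the set of admissible $\e_8$ has bounded class and lies in a bounded number of balls of radius $R(\ee'; u_\classtuple B)$; the crux is to bound $R$ favourably. The key observation is that \eqref{eq:D4_height_1} and \eqref{eq:D4_height_4} are affine in $\e_8$ with the \emph{same} leading coefficient $\e_1^2\e_2\e_3^2\e_4\e_5^2$, so they confine $\e_8$ to the intersection of two balls of equal radius $\ll_\classtuple\bigl(B/\N(\eI_1^2\eI_2\eI_3^2\eI_4\eI_5^2)\bigr)^{1/2}$, one centred at $0$ and one at $-\e_2\e_7^2/(\e_3\e_5^2)$. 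By the triangle inequality (recall that $\abs{\cdot}=|\cdot|^2$), this intersection is empty --- so such $\ee'$ contribute nothing --- unless $\abs{\e_1^2\e_2^2\e_3\e_4\e_7^2}\ll_\classtuple B$, and otherwise it lies in one such ball. Feeding this bound for $R$, this extra restriction on $\ee'$, and the restriction $\N(\eI_1^3\eI_2^2\eI_3^2\eI_4^2\eI_5\eI_6\eI_7)\le B$ coming from \eqref{eq:D4_height_3} into the error estimate of \cite[Proposition~5.3]{arXiv:1302.6151} --- which bounds the error by $\ll\sum_{\ee'}2^{O(\omega_K(\eI_1\eI_2))}\bigl(R(\ee'; u_\classtuple B)/\N(\eI_4\eI_6^2)^{1/2}+1\bigr)$, with the divisor weight supported only on $\eI_1$, $\eI_2$ --- one then, after replacing the sum over the $\e_j$ by a sum over nonzero ideals, carries out the summation over $\eI_7$ using the extra restriction, the summation over $\eI_5$, $\eI_6$ using $\sum_{\N\aaa\le X}1/\N\aaa\ll\log X$, and the remaining summations over $\eI_1,\dots,\eI_4$ using $\sum_{\N\aaa\le X}2^{O(\omega_K(\aaa))}/\N\aaa^{s}\ll1$ for the relevant $s>1$. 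This yields the claimed error $O_\classtuple(B(\log B)^2)$.

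The step I expect to be the main obstacle is this error estimate. Unlike the case of $S_1$, bounding $R$ by the single condition \eqref{eq:D4_height_5} (or by \eqref{eq:D4_height_1} alone) is \emph{not} sufficient --- it leads to an error of size a positive power of $B$ --- so one genuinely has to exploit that the two affine conditions \eqref{eq:D4_height_1}, \eqref{eq:D4_height_4} \emph{simultaneously} bound $R$ and force the additional restriction $\abs{\e_1^2\e_2^2\e_3\e_4\e_7^2}\ll_\classtuple B$ on $\ee'$; getting every exponent of $B$ and of $\log B$ to come out exactly right then rests on the same divisor-sum estimates as in the proofs of Lemmas~\ref{lem:A3+A1_first_summation} and~\ref{lem:A4_first_summation}. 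The main-term identification, by contrast, is routine once the dictionary with \cite[Lemma~9.2]{arXiv:1302.6151} is in place.
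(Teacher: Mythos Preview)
Your identification of the main term via \cite[Proposition~5.3]{arXiv:1302.6151} matches the paper exactly, including the choice $(A_1,A_0)=(2,7)$, $(B_1,B_2,B_0)=(3,5,8)$, $(C_1,C_2,C_0)=(4,6,9)$, $D=1$.

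For the error term, your approach is correct but differs from the paper's. You control $R$ by combining the two affine conditions \eqref{eq:D4_height_1} and \eqref{eq:D4_height_4} to get a single radius $R\ll_\classtuple(B/\N(\eI_1^2\eI_2\eI_3^2\eI_4\eI_5^2))^{1/2}$ together with the extra restriction $\abs{\e_1^2\e_2^2\e_3\e_4\e_7^2}\ll_\classtuple B$, and this indeed sums to $O_\classtuple(B(\log B)^2)$ without any case distinction. The paper instead splits into $\e_7\ne 0$ and $\e_7=0$: for $\e_7\ne 0$ it bounds $R$ using \cite[Lemma~3.4,~(1)]{arXiv:1302.6151} applied to the quadratic condition \eqref{eq:D4_height_5}, obtaining $R\ll_\classtuple(B\N(\eI_4\eI_6^2\eI_2^{-1}\eI_7^{-2}))^{1/2}$, and then sums using \eqref{eq:D4_height_2} and \eqref{eq:D4_height_3}; for $\e_7=0$ it falls back on \eqref{eq:D4_height_1}. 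Both routes give $O_\classtuple(B(\log B)^2)$. Your assertion that bounding $R$ via \eqref{eq:D4_height_5} ``leads to an error of size a positive power of $B$'' is therefore not correct---the paper shows precisely this route works---so this is not the obstacle you anticipated. Your two-balls trick is a nice uniform alternative that avoids the $\e_7=0/\e_7\ne 0$ dichotomy.

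Two small inaccuracies: the divisor weight from \cite[Proposition~5.3]{arXiv:1302.6151} here is $2^{\omega_K(\eI_1\eI_4)+\omega_K(\eI_1\eI_2\eI_3)}$, not supported only on $\eI_1,\eI_2$; and since $\e_7\in\OO_7$ (not $\OO_7^{\ne 0}$), the case $\e_7=0$ should be mentioned, though in your approach it is harmless (it contributes a single term per $\ee''$, absorbed by the same estimate).
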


\begin{proof}
  We apply \cite[Proposition 5.3]{arXiv:1302.6151} with $(A_1, A_0) := (2,7)$,
  $(B_1, B_2, B_0) := (3,5,8)$, $(C_1, C_2, C_0):=(4,6,9)$, $D:= 1$,
  $u_\classtuple B$ instead of $B$, and $\Ao$,
  $\At$ as suggested in \cite[Remark 5.2]{arXiv:1302.6151}.

  As in Lemma \ref{lem:A3+A1_first_summation}, we see that the main arising
  from \cite[Proposition 5.3]{arXiv:1302.6151} is the main term in the lemma,
  so it remains to deal with the error term.

  For given $\ee'$ and $B$, the set of all $\e_8 \in
  \CC$ with $(\e_1, \ldots, \e_8)\in\mathcal{R}(u_\classtuple B)$ is contained
  in the union of two balls of radius 
  \begin{equation*}
    R(\ee'; u_\classtuple B) \ll_\classtuple
    \begin{cases}
      (B\N(\eI_4 \eI_6^2 \eI_2^{-1} \eI_7^{-2}))^{1/2} &\text{ if }\e_7 \neq 0\text,\\
      (B/\N(\eI_1^{2}\eI_2\eI_3^{2}\eI_4\eI_5^{2}))^{1/2} &\text{ if
      }\e_7 = 0\text.
    \end{cases}
  \end{equation*}
  Indeed, this follows from \cite[Lemma 3.4,
  (1)]{arXiv:1302.6151}, applied to \eqref{eq:D4_height_5}, if $\e_7\neq 0$ and
  from \eqref{eq:D4_height_1} if $\eta_7 = 0$.

  Hence, the error term is
  \begin{equation*}
    \ll \sum_{\ee'\text{, }\eqref{eq:second_height_cond_ideals}\text{, }\eqref{eq:third_height_cond_ideals}}2^{\omega_K(\eI_1\eI_4)+\omega_K(\eI_1\eI_2\eI_3)}\left(\frac{R(\ee'; u_\classtuple B)}{\N(\eI_4 \eI_6^2)^{1/2}}+1\right)\text,
  \end{equation*}
  where, using \eqref{eq:D4_height_2}, \eqref{eq:D4_height_3}, the sum runs over all $\ee'$ with
  \begin{align}
    \N(\eI_1^4 \eI_2^2\eI_3^3\eI_4^3\eI_5^2 \eI_6^2) &\leq B \label{eq:second_height_cond_ideals}\text{, and}\\
    \N(\eI_1^3\eI_2^2\eI_3^2\eI_4^2\eI_5\eI_6\eI_7) &\leq
    B \label{eq:third_height_cond_ideals}\text.
  \end{align}
  Let us first estimate the sum over all $\ee'$ with $\e_7 \neq 0$. We may sum over the
  $I_j$ instead of the $\e_j$ and obtain
    \begin{align*}
      &\ll_\classtuple \sum_{\eII'\text{, }\eqref{eq:second_height_cond_ideals}\text{, }\eqref{eq:third_height_cond_ideals}}2^{\omega_K(\eI_1\eI_4)+\omega_K(\eI_1\eI_2\eI_3)}\left(\frac{B^{1/2}}{\N(\eI_2\eI_7^2)^{1/2}}+1\right)\\
      &\ll\sum_{\substack{\eI_1, \dots, \eI_5, \eI_7\\\N\eI_j \leq B}}\left(\frac{2^{\omega_K(\eI_1\eI_4)+\omega_K(\eI_1\eI_2\eI_3)}B}{\N\eI_1^2\N\eI_2^{3/2}\N\eI_3^{3/2}\N\eI_4^{3/2}\N\eI_5\N\eI_7}+\frac{2^{\omega_K(\eI_1\eI_4)+\omega_K(\eI_1\eI_2\eI_3)}B}{\N\eI_1^3\N\eI_2^2\N\eI_3^2\N\eI_4^2\N\eI_5\N\eI_7}\right)\\
      &\ll B(\log B)^2.
    \end{align*}
  
  Now we assume that $\e_7 = 0$ and sum over the remaining variables. We obtain
  the upper bound
  \begin{align*}
      &\ll_\classtuple\sum_{\eI_1, \ldots, \eI_6\text{, }\eqref{eq:second_height_cond_ideals}}
      2^{\omega_K(\eI_1\eI_4)+\omega_K(\eI_1\eI_2\eI_3)}\left(\frac{B^{1/2}}{\N\eI_1\N\eI_2^{1/2}\N\eI_3\N\eI_4\N\eI_5\N\eI_6}+1\right)\\
      &\ll\sum_{\eI_1, \eI_3, \ldots, \eI_6}\left(\frac{2^{\omega_K(\eI_1\eI_4)+\omega_K(\eI_1\eI_3)}B^{3/4}\log B}{\N\eI_1^{2}\N\eI_3^{7/4}\N\eI_4^{7/4}\N\eI_5^{3/2}\N\eI_6^{3/2}} + \frac{2^{\omega_K(\eI_1\eI_4)+\omega_K(\eI_1\eI_3)}B^{1/2}\log B}{\N\eI_1^{2}\N\eI_3^{3/2}\N\eI_4^{3/2}\N\eI_5\N\eI_6} \right)\\
      &\ll B^{3/4}\log B + B^{1/2}(\log B)^3\ll B^{3/4}\log B\text.\qedhere
  \end{align*}
\end{proof}
  
\subsubsection{The second summation over $\e_7$}
\begin{lemma}\label{lem:D4_second_summation}
  Write $\ee'' := (\e_1, \dots, \e_6)$. We have
  \begin{align*}
    |M_\classtuple(B)| &= \left(\frac{2}{\sqrt{|\Delta_K|}}\right)^2\sum_{\ee''\in \OO_{1*} \times \dots \times \OO_{6*}} \mathcal{A}(\theta_8(\eII'), \eI_7)V_7(\N\eI_1, \ldots, \N\eI_6; B)\\ &+ O_\classtuple(B(\log B)^2),
  \end{align*}
  where, for $t_1, \ldots, t_6 \geq 1$,
  \begin{equation*}
    V_7(t_1, \ldots, t_6; B) := \frac{\pi}{t_4 t_6^2}\int_{(\sqrt{t_1}, \ldots,
      \sqrt{t_7}, \e_8) \in \mathcal{R}(B)} \dd t_7 \dd \e_8,
  \end{equation*}
 with a positive variable $t_7$ and a complex variable $\e_8$.
\end{lemma}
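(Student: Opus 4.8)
The plan is to apply the general second-summation machinery of \cite[Section~6]{arXiv:1302.6151} to the sum over $\e_7$ appearing in Lemma~\ref{lem:D4_first_summation}, in the simplest regime $b_0 = 1$, exactly as was done for $S_1$ in Lemma~\ref{lem:A3+A1_second_summation}. First I would rewrite $|M_\classtuple(B)|$ using Lemma~\ref{lem:D4_first_summation} as
\begin{equation*}
  |M_\classtuple(B)| = \frac{2}{\sqrt{|\Delta_K|}}\sum_{\ee'' \in \OO_{1*} \times \dots \times \OO_{6*}}\sum_{\e_7 \in \OO_{7*}}\vartheta(\eI_7)g(\N\eI_7) + O_\classtuple(B(\log B)^2),
\end{equation*}
where $\vartheta(\aaa) := \theta_8(\eI_1, \ldots, \eI_6, \aaa)$ and $g(t) := V_8(\N\eI_1, \ldots, \N\eI_6, t; B)$. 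From the local description of $\theta_8$ in Lemma~\ref{lem:D4_first_summation}, together with \cite[Lemma~2.2]{arXiv:1302.6151}, one reads off that $\vartheta$ satisfies \cite[(6.1)]{arXiv:1302.6151} with $C = 0$ and a multiplicative constant $c_\vartheta$ of the shape $2^{\omega_K(\eI_1\eI_2\eI_3)}$ (the vertices adjacent to $E_7$ in Figure~\ref{fig:D4_dynkin}, which control for which $\p$ the factor $\theta_{1,\p}(J_\p)$ can depend on whether $\p \mid \eI_7$); this is the analogue of the computation citing \cite[Lemma~5.4]{arXiv:1302.6151} in the $S_1$ case.

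Next I would record the support and size bounds for $g$. The height condition \eqref{eq:D4_height_3} forces $g(t) = 0$ unless $t \le t_2 := B/(\N\eI_1^3\N\eI_2^2\N\eI_3^2\N\eI_4^2\N\eI_5\N\eI_6)$, and applying \cite[Lemma~3.4,~(2)]{arXiv:1302.6151} to the inequality \eqref{eq:D4_height_5} gives a pointwise bound of the form $g(t) \ll B^{1/2}/(\N(\eI_4\eI_6^2) \cdot (\N(\eI_3\eI_5^2) t)^{1/2})$ or, more crudely, $g(t) \ll B/(\N\eI_1\cdots\N\eI_6\, t)$, together with the decay factors coming from \eqref{eq:D4_height_2} that will eventually be needed in Lemma~\ref{lem:D4_completion} but are not required here. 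Then a direct application of \cite[Proposition~6.1]{arXiv:1302.6151} with these data replaces the inner sum over $\e_7$ by
\begin{equation*}
  \frac{2\pi}{\sqrt{|\Delta_K|}}\mathcal{A}(\vartheta(\aaa),\aaa)\int_{t\ge 1}g(t)\dd t + O\!\left(\frac{2^{\omega_K(\eI_1\eI_2\eI_3)}B}{\N\eI_1\cdots\N\eI_6}\,(\text{error factor})\right),
\end{equation*}
and the main term rewrites, after unwinding $V_8$, as the claimed $V_7$; here the factor $1/(t_4 t_6^2)$ carries through untouched since it does not involve $\e_7$.

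The remaining work, and the only place where one must be a little careful, is the error term. Here I expect the estimate to go through \emph{without} the dichotomy that was needed for $S_1$: in the $\Dfour$ case the exponent structure in \eqref{eq:D4_height_2} and \eqref{eq:D4_height_3} is favourable enough (the surface is, as the introduction notes, "probably the most simple one"), so summing the error over $\eII''$ subject to \eqref{eq:second_height_cond_ideals} and \eqref{eq:third_height_cond_ideals} with $t = \N\eI_7$, and using that $0 \le \vartheta \le 1$, should already yield $O_\classtuple(B(\log B)^2)$ after a routine estimate of sums of the form $\sum_{\N\eI_j \le B} 2^{\omega_K(\cdots)}/\prod \N\eI_j^{\sigma_j}$ with all $\sigma_j \ge 1$. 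If any single sum were only $\gg B(\log B)^3$ one would split off $\N\eI_7 \le (\log B)^A$ exactly as in Lemma~\ref{lem:A3+A1_second_summation}, but I anticipate this is unnecessary. The main obstacle is therefore purely bookkeeping: matching the combinatorial data $(A_1,A_0),(B_1,B_2,B_0),(C_1,C_2,C_0),D$ from Lemma~\ref{lem:D4_first_summation} against the hypotheses of \cite[Proposition~6.1]{arXiv:1302.6151}, and verifying that the absolutely-convergent sum $\mathcal{A}(\vartheta(\aaa),\aaa)$ producing the correct arithmetic factor is well-defined — both of which follow formally from the cited results, so the proof will end with "analogous to \cite[Lemma~9.3]{arXiv:1302.6151}" or a short explicit error computation as above.
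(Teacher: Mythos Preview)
Your strategy is exactly the paper's --- apply \cite[Proposition~6.1]{arXiv:1302.6151} in the case $b_0=1$, with no dichotomy --- and your prediction that the error is acceptable without splitting is correct (the paper in fact obtains $O(B\log B)$). Two details need correction, however. First, $c_\vartheta$ should be $2^{\omega_K(\eI_1\eI_3\eI_4\eI_5\eI_6)}$: in Figure~\ref{fig:D4_dynkin} the only vertex among $E_1,\dots,E_6$ adjacent to $E_7$ is $E_2$, so $\theta_{1,\p}(J)$ changes upon adding $7$ to $J$ precisely when $\p$ divides one of the \emph{non}-adjacent $\eI_1,\eI_3,\eI_4,\eI_5,\eI_6$; your reasoning about adjacency is inverted. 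Second, your bound on $g(t)$ carries a spurious $t^{-1/2}$: the coefficient $\e_3\e_5^2$ of $\e_8^2$ in \eqref{eq:D4_height_5} does not involve $\e_7$, so \cite[Lemma~3.4,~(2)]{arXiv:1302.6151} yields the \emph{constant} bound $g(t)\ll B^{1/2}/(\N\eI_3^{1/2}\N\eI_4^{1/2}\N\eI_5\N\eI_6)=:c_g$ (and your ``crude'' alternative $g(t)\ll B/(\N\eI_1\cdots\N\eI_6\,t)$ does not follow either). With the correct inputs the error from \cite[Proposition~6.1]{arXiv:1302.6151} is $\ll c_\vartheta\, c_g\, t_2^{1/2}$, and summing over $\eII''$ subject to \eqref{eq:second_height_cond_ideals} gives $O(B\log B)$. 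One further point you omit: here $\e_7=0$ is permitted ($\OO_{7*}=\OO_7$), so the term $\vartheta((0))g(0)$ and the tail $\int_0^1 g(t)\dd t$ must be shown to be negligible before matching the main term to $V_7$; both are $\ll c_g$, hence absorbed by the error via \eqref{eq:D4_height_2}.
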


\begin{proof}
  We apply \cite[Proposition 6.1]{arXiv:1302.6151} as suggested in
  \cite[Section 6]{arXiv:1302.6151} in the case $b_0=1$. We have
  \begin{equation}\label{eq:D4_sec_sum_start}
    |M_\classtuple(B)| = \frac{2}{\sqrt{|\Delta_K|}}\sum_{\ee'' \in \OO_{1*} \times \dots \times \OO_{6*}}\sum_{\e_7 \in \OO_7}\vartheta(\eI_7)g(\N\eI_7) + O_\classtuple(B(\log B)^2)\text,
  \end{equation}
  where $\vartheta(\eI_7) := \theta_8(\eII')$ and
  $g(t) := V_8(\N\eI_1, \ldots, \N\eI_6, t; B)$.

  By \cite[Lemma 5.4, Lemma 2.2]{arXiv:1302.6151},  the function $\vartheta$
  satisfies \cite[(6.1)]{arXiv:1302.6151} with $C := 0$ and $c_\vartheta :=
  2^{\omega_K(\eI_1\eI_3 \cdots \eI_6)}$.
 
  By \eqref{eq:D4_height_3}, we have $g(t) = 0$ whenever $t > t_2 := B/(\N\eI_1^3\N\eI_2^2\N\eI_3^2\N\eI_4^2
  \N\eI_5\N\eI_6)$, and by \cite[Lemma 3.4, (2)]{arXiv:1302.6151} applied to
  \eqref{eq:D4_height_5}, we obtain
  \begin{equation*}
    g(t) \ll \frac{1}{\N\eI_4\N\eI_6^2}\cdot \frac{(\N\eI_4\N\eI_6^2 B)^{1/2}}{(\N\eI_3\N\eI_5^2)^{1/2}} = \frac{B^{1/2}}{\N\eI_3^{1/2}\N\eI_4^{1/2}\N\eI_5\N\eI_6} =: c_g\text.
  \end{equation*}
  By \cite[Proposition 6.1]{arXiv:1302.6151}, the sum over $\e_7$ in \eqref{eq:D4_sec_sum_start} is just 
  \begin{align*}
    &\vartheta((0))g(0) + \frac{2\pi}{\sqrt{|\Delta_K|}}
    \mathcal{A}(\vartheta(\aaa), \aaa, \OO_K) \int_{t \geq 1}g(t)\dd t\\ &+
    O\left(\frac{2^{\omega_K(\eI_1\eI_3\cdots\eI_6)}B^{1/2}}{\N\eI_3^{1/2}\N\eI_4^{1/2}\N\eI_5\N\eI_6}\cdot\frac{B^{1/2}}{\N\eI_1^{3/2}\N\eI_2\N\eI_3\N\eI_4\N\eI_5^{1/2}\N\eI_6^{1/2}}\right).
  \end{align*}

  Due to (\ref{eq:D4_height_2}), $\vartheta((0))g(0)$ and $\int_0^1g(t)\dd t$ are dominated
  by the error term, so the main term in the lemma is correct.

  Let us consider the error term. Both the sum and the integral are
  zero whenever $\ee''$ violates \eqref{eq:D4_height_2}. We may sum over the
  $(\eI_1, \ldots, \eI_6)$ satisfying \eqref{eq:second_height_cond_ideals} instead of
  the $\ee''$, so the error term is
  \begin{align*}
      &\ll \sum_{\eII'',\ \N\eI_j\leq B}2^{\omega_K(\eI_1\eI_3\cdots\eI_6)}\left(\frac{B}{\N\eI_1^{3/2}\N\eI_2\N\eI_3^{3/2}\N\eI_4^{3/2}\N\eI_5^{3/2}\N\eI_6^{3/2}}\right)\\
      &\ll B\log B.\qedhere
\end{align*}
\end{proof}

\begin{lemma}\label{lem:D4_second_summation_ideals}
  If $\eII''$ runs over all six-tuples $(\eI_1, \ldots, \eI_6)$ of
  nonzero ideals of $\OO_K$ then we have
  \begin{equation*}
    N_{U_3,H}(B) = \left(\frac{2}{\sqrt{|\Delta_K|}}\right)^2\sum_{\eII''}\mathcal{A}(\theta_8(\eII'), \eI_7) V_7(\N\eI_1, \ldots, \N\eI_6; B) + O(B(\log B)^2)\text. 
  \end{equation*}
\end{lemma}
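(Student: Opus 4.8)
The proof is analogous to \cite[Lemma~9.4]{arXiv:1302.6151}: the plan is to substitute the conclusion of Lemma~\ref{lem:D4_second_summation} into the identity of Lemma~\ref{lem:D4_passage_to_torsor} and to reorganise the resulting double sum over $\classtuple\in\classrep^6$ and $\ee''$ into a single sum over six-tuples of nonzero ideals. The first observation is that the summand $\mathcal{A}(\theta_8(\eII'),\eI_7)V_7(\N\eI_1,\dots,\N\eI_6;B)$ depends on $\ee''=(\e_1,\dots,\e_6)$ only through the ideals $\eI_j=\e_j\OO_j^{-1}$ for $1\le j\le 6$: indeed $\theta_8(\eII')$ is by construction a function of these ideals, and $V_7$ only of their norms. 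For a fixed $\classtuple$, the admissible ideal tuples---those with $[\eI_j]=[\OO_j^{-1}]$ for all $j$, equivalently with $\eI_j\OO_j$ principal---are exactly the ones arising from some $\ee''\in\OO_{1*}\times\dots\times\OO_{6*}$, and each of them arises from precisely $\omega_K^6$ such tuples, since for each $j$ the element $\e_j$ is determined up to a unit by $(\e_j)=\eI_j\OO_j$ and automatically lies in $\OO_j$.

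Next I would sum over $\classtuple\in\classrep^6$ and use that the map of ideal classes $([C_0],\dots,[C_5])\mapsto([\OO_1],\dots,[\OO_6])$ is a bijection of the sixfold product of the ideal class group with itself. This is immediate from the explicit formulas for $\OO_1,\dots,\OO_6$: one recovers the classes of $C_4$ and $C_5$ from those of $\OO_6$ and $\OO_5$, then the class of $C_3$ from that of $\OO_4$, then successively the classes of $C_2$ and $C_1$ from those of $\OO_1$ and $\OO_2$, and finally the class of $C_0$ from that of $\OO_3$. Hence, as $\classtuple$ runs over $\classrep^6$ and, for each such $\classtuple$, the tuple $(\eI_1,\dots,\eI_6)$ runs over all admissible six-tuples of nonzero integral ideals, the combined data runs over all six-tuples of nonzero integral ideals of $\OO_K$, each exactly once. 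Combining this with the previous paragraph gives
\begin{align*}
  &\sum_{\classtuple\in\classrep^6}\ \sum_{\ee''\in\OO_{1*}\times\dots\times\OO_{6*}}\mathcal{A}(\theta_8(\eII'),\eI_7)V_7(\N\eI_1,\dots,\N\eI_6;B)\\
  &\qquad=\omega_K^6\sum_{\eII''}\mathcal{A}(\theta_8(\eII'),\eI_7)V_7(\N\eI_1,\dots,\N\eI_6;B),
\end{align*}
with $\eII''$ running over all six-tuples $(\eI_1,\dots,\eI_6)$ of nonzero ideals on the right.

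Finally, since $\classrep^6$ is finite, of cardinality $h_K^6$, the accumulated error term $\sum_{\classtuple\in\classrep^6}O_\classtuple(B(\log B)^2)$ is again $O(B(\log B)^2)$. Dividing the resulting identity by $\omega_K^6$ and invoking Lemma~\ref{lem:D4_passage_to_torsor} produces exactly the asserted formula, the constant $\bigl(2/\sqrt{|\Delta_K|}\bigr)^2$ being inherited unchanged from Lemma~\ref{lem:D4_second_summation}. The argument is entirely formal; the only step requiring genuine verification is the bijectivity of the class-group map $\classtuple\mapsto([\OO_1],\dots,[\OO_6])$, and the explicit definitions of the $\OO_j$ make this transparent.
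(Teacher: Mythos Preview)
Your proof is correct and follows exactly the approach indicated by the paper, which simply refers to \cite[Lemma~9.4]{arXiv:1302.6151}; you have spelled out the details of that analogy faithfully, including the key bijectivity of the class-group map $\classtuple\mapsto([\OO_1],\dots,[\OO_6])$.
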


\begin{proof}
 This is analogous to \cite[Lemma 9.4]{arXiv:1302.6151}.
\end{proof}

\subsubsection{The remaining summations}
\begin{lemma}\label{lem:D4_completion}
  We have
  \begin{equation*}
   N_{U_3,H}(B) = \left(\frac{2}{\sqrt{|\Delta_K|}}\right)^8 \left(\frac{h_K}{\omega_K}\right)^6 \theta_0V_0(B) + O(B(\log B)^4\log \log B),
  \end{equation*}
  where $\theta_0$ is as in \eqref{eq:def_theta_0} and
  \begin{equation*}
    V_0(B) := \int\limits_{\substack{(\e_1, \dots,  \e_8) \in \mathcal{R}(B)\\\abs{\e_1}, \ldots, \abs{\e_6}\ge 1}}\frac{1}{\abs{\e_4 \e_6^2}}\dd \e_1 \cdots \dd \e_8,
  \end{equation*}
  with complex variables $\e_1, \ldots, \e_8$.
\end{lemma}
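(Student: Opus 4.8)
The plan is to proceed exactly as in the proof of Lemma~\ref{lem:A3+A1_completion}, which in turn follows \cite[Lemma 9.5, Lemma 9.9]{arXiv:1302.6151}. By Lemma~\ref{lem:D4_second_summation_ideals}, $N_{U_3,H}(B)$ equals $\left(\frac{2}{\sqrt{|\Delta_K|}}\right)^2$ times $\sum_{\eII''}\mathcal{A}(\theta_8(\eII''),\eI_7)\,V_7(\N\eI_1,\ldots,\N\eI_6;B)$, the sum being over all six-tuples $\eII'' = (\eI_1,\ldots,\eI_6)$ of nonzero ideals, up to an error of size $O(B(\log B)^2)$. It remains to carry out these six summations and to recognise the limit as the complex integral $V_0(B)$.

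To apply \cite[Proposition 7.3]{arXiv:1302.6151} I would first record a decay bound for $V_7$. Bounding the integral over the real variable $t_7$ and the complex variable $\e_8$ by means of \cite[Lemma 3.4, (6)]{arXiv:1302.6151} applied to \eqref{eq:D4_height_5}, one obtains
\begin{equation*}
  V_7(t_1,\ldots,t_6;B) \ll \frac{B}{t_1\cdots t_6}\left(\frac{B}{t_1^4 t_2^2 t_3^3 t_4^3 t_5^2 t_6^2}\right)^{-\delta}
\end{equation*}
for a suitable $\delta \in (0,1)$, the monomial $t_1^4 t_2^2 t_3^3 t_4^3 t_5^2 t_6^2$ being the one occurring in \eqref{eq:D4_height_2}; should \cite[Proposition 7.3]{arXiv:1302.6151} also require an upper bound of the opposite type, it is provided by estimating the $t_7$-integral with \eqref{eq:D4_height_3} and the $\e_8$-integral with \eqref{eq:D4_height_1}, which gives $V_7(t_1,\ldots,t_6;B) \ll \frac{B}{t_1\cdots t_6}\cdot\frac{B}{t_1^4 t_2^2 t_3^3 t_4^3 t_5^2 t_6^2}$. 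With these bounds the hypotheses of \cite[Proposition 7.3]{arXiv:1302.6151} hold, and I would invoke it with $r=5$, exactly as in \cite[Lemma 9.5]{arXiv:1302.6151}.

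This turns the sum into a six-fold integral over $t_1,\ldots,t_6\ge 1$, up to an error $O(B(\log B)^4\log\log B)$. The arithmetic factor $\prod_\p c_\p$ that appears is the product of the local densities of Lemma~\ref{lem:D4_first_summation}, averaged over the seventh coordinate through $\mathcal{A}(\cdot,\eI_7)$: summing $\theta_{1,\p}(J)$ over the admissible $J$ according to whether $\p\mid\eI_7$, one checks for each $\p$ that $c_\p = \left(1-\frac1{\N\p}\right)^6\left(1+\frac6{\N\p}+\frac1{\N\p^2}\right)$, so that $\prod_\p c_\p = \theta_0$ as in \eqref{eq:def_theta_0}. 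Finally, by polar coordinates — the six summations each contributing a factor $\frac{2 h_K}{\omega_K\sqrt{|\Delta_K|}}$, and the explicit $\pi$ in the prefactor $\pi/(t_4 t_6^2)$ of $V_7$ being absorbed when the $t_7$-integral is rewritten as a complex integral over $\e_7$ — the main term becomes $\left(\frac{2}{\sqrt{|\Delta_K|}}\right)^{8}\left(\frac{h_K}{\omega_K}\right)^{6}\theta_0 V_0(B)$ with $V_0(B)$ as in the statement, exactly as in \cite[Lemma 9.5, Lemma 9.9]{arXiv:1302.6151}. The only genuinely delicate point is extracting from \eqref{eq:D4_height_5} a decay exponent $\delta$ large enough for \cite[Proposition 7.3]{arXiv:1302.6151} to apply with an error term no worse than $O(B(\log B)^4\log\log B)$; the Euler-product identity $\prod_\p c_\p = \theta_0$ and the bookkeeping of the constants are routine.
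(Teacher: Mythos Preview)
Your proposal is correct and follows essentially the same route as the paper: obtain a bound $V_7(t_1,\ldots,t_6;B)\ll \frac{B}{t_1\cdots t_6}\bigl(B/(t_1^4t_2^2t_3^3t_4^3t_5^2t_6^2)\bigr)^{-\delta}$ from \eqref{eq:D4_height_5}, apply \cite[Proposition~7.3]{arXiv:1302.6151} with $r=5$, and pass to complex variables via polar coordinates. The paper makes this explicit with $\delta=1/4$, invoking \cite[Lemma~3.4,~(5)]{arXiv:1302.6151} rather than~(6); the distinction reflects that in \eqref{eq:D4_height_5} the variable $\e_7$ enters only through the linear-in-$\e_8$ term, unlike in \eqref{eq:A3+A1_height_5}. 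The auxiliary upper bound and the Euler-product verification you sketch are not spelled out in the paper but do no harm.
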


\begin{proof}
  By \cite[Lemma 3.4, (5)]{arXiv:1302.6151} applied to \eqref{eq:D4_height_5}, we have
  \begin{equation*}
    V_7(t_1, \ldots, t_6; B) \ll \frac{B^{3/4}}{t_2^{1/2}t_3^{1/4}t_4^{1/4}t_5^{1/2}t_6^{1/2}}
    =\frac{B}{t_1 \cdots
      t_6}\left(\frac{B}{t_1^4t_2^2t_3^3t_4^3t_5^2t_6^2}\right)^{-1/4}\text.
  \end{equation*}
  We apply \cite[Proposition 7.3]{arXiv:1302.6151} with $r=5$ and use polar coordinates.
\end{proof}

\subsection{Proof of Theorem \ref{thm:main} for $S_3$}\label{sec:D4_completion}

\begin{lemma}\label{lem:D4_predicted_volume}
  Let $\alpha(\tS_3)$, $\omega_\infty(\tS_3)$ be as in Theorem \ref{thm:main}
  and $\mathcal{R}(B)$ as in \eqref{eq:D4_height_1}--\eqref{eq:D4_height_5}. Define
  \begin{equation*}
    V_0'(B) := \int_{\substack{(\e_1, \ldots, \e_8) \in
        \mathcal{R}(B)\\\abs{\e_1},\abs{\e_2},\abs{\e_4},\abs{\e_5},\abs{\e_6}\ge
      1\\\abs{\e_1^4\e_2^2\e_4^3\e_5^2\e_6^2}\le
      B}}\frac{1}{\abs{\e_4\e_6^2}}\dd \e_1 \cdots \dd \e_8,
  \end{equation*}
  where $\e_1, \ldots, \e_8$ are complex variables. Then
  \begin{equation}\label{eq:D4_predicted_volume}
   \pi^6\alpha(\tS_3) \omega_\infty(\tS_3) B(\log B)^5 = 4 V_0'(B).
  \end{equation}
\end{lemma}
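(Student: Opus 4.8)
The plan is to follow the template of Lemma~\ref{lem:A3+A1_predicted_volume} and Lemma~\ref{lem:A4_predicted_volume}: rewrite $\omega_\infty(\tS_3)$ and $\alpha(\tS_3)$ separately as explicit integrals over subsets of the $\e$-variables, then multiply them and apply Fubini. For the first, fix $\e_1,\e_2,\e_4,\e_5,\e_6\in\CC\smallsetminus\{0\}$ and $B>0$, let $\e_3,\e_7,\e_8$ be complex variables, put $l:=(B\abs{\e_1^2\e_2\e_4^3\e_5\e_6})^{1/2}$, and, reading $z_0=y_0$, $z_1=y_1$, $z_2=y_2$ off the projection and the torsor map $\Psi$ from the proof of Lemma~\ref{lem:D4_passage_to_torsor}, apply the diagonal substitution
\[
 z_0=l^{-1/3}\e_5\e_6^{-1}\e_8,\qquad z_1=l^{-1/3}\e_1^2\e_2\e_4^2\e_5\e_6\,\e_3,\qquad z_2=l^{-1/3}\e_1\e_2\e_4\,\e_7
\]
to $\omega_\infty(\tS_3)$; its real Jacobi determinant is $\abs{\e_1\e_2\e_5}/(B\abs{\e_6})=(\abs{\e_1\e_2\e_4\e_5\e_6}/B)\cdot\abs{\e_4\e_6^2}^{-1}$.

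The crucial point is then to check that, upon expanding $z_0z_1+z_2^2=l^{-2/3}\e_1^2\e_2\e_4^2(\e_3\e_5^2\e_8+\e_2\e_7^2)$, the five inequalities $\abs{z_0z_1^2},\abs{z_1^3},\abs{z_1^2z_2},\abs{z_1(z_0z_1+z_2^2)},\abs{z_0(z_0z_1+z_2^2)}\leq 1$ defining $\omega_\infty(\tS_3)$ become exactly \eqref{eq:D4_height_1}, \eqref{eq:D4_height_2}, \eqref{eq:D4_height_3}, \eqref{eq:D4_height_4}, \eqref{eq:D4_height_5}, in that order (the denominator $\e_4\e_6^2$ in \eqref{eq:D4_height_5} arising from the $\e_6^{-1}$ in $z_0$); the scaling $l$ is pinned down precisely so that the $\abs{z_1^3}$-inequality reproduces \eqref{eq:D4_height_2}. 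This yields, for every $\e_1,\e_2,\e_4,\e_5,\e_6$ as above and this same $B$,
\[
 \omega_\infty(\tS_3)=\frac{12}{\pi}\,\frac{\abs{\e_1\e_2\e_4\e_5\e_6}}{B}\int_{(\e_1,\dots,\e_8)\in\mathcal{R}(B)}\frac{1}{\abs{\e_4\e_6^2}}\dd\e_3\dd\e_7\dd\e_8.
\]

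For $\alpha(\tS_3)$, one reads off from $\Psi$ and the blow-up description that $[-K_{\tS_3}]=[4E_1+2E_2+3E_3+3E_4+2E_5+2E_6]$ and $[E_7]=[E_1+E_3+E_4+E_5+E_6]$. The negative curves $[E_1],\dots,[E_7]$ generate the effective cone of $\tS_3$; since all coefficients of $[E_7]$ are non-negative, $[E_7]$ lies in the simplicial cone spanned by $[E_1],\dots,[E_6]$ and imposes no further constraint, so \cite[Lemma~8.1]{arXiv:1302.6151} (applied with $\e_3$ in the distinguished role, the coefficient $3$ of $[E_3]$ in $[-K_{\tS_3}]$ accounting for the factor $\tfrac13$; possibly after a relabelling as in Lemma~\ref{lem:A3+A1_predicted_volume}) gives
\[
 \alpha(\tS_3)(\log B)^5=\frac{1}{3\pi^5}\int_{\substack{\abs{\e_1},\abs{\e_2},\abs{\e_4},\abs{\e_5},\abs{\e_6}\geq 1\\\abs{\e_1^4\e_2^2\e_4^3\e_5^2\e_6^2}\leq B}}\frac{\dd\e_1\dd\e_2\dd\e_4\dd\e_5\dd\e_6}{\abs{\e_1\e_2\e_4\e_5\e_6}}.
\]

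Finally, I would substitute the two displays into the left-hand side of~\eqref{eq:D4_predicted_volume}: pulling $\omega_\infty(\tS_3)B$ inside the five-fold integral over $\e_1,\e_2,\e_4,\e_5,\e_6$ (legitimate because every point of the domain has all coordinates nonzero) and invoking the first display, the prefactor $\abs{\e_1\e_2\e_4\e_5\e_6}$ cancels the integrand $\abs{\e_1\e_2\e_4\e_5\e_6}^{-1}$, the constants collapse as $\pi^6\cdot\tfrac1{3\pi^5}\cdot\tfrac{12}{\pi}=4$, and Fubini merges the two integrals into the eight-fold integral $4V_0'(B)$. I expect the only real work to be the bookkeeping of the first step — fixing the monomial coefficients of the substitution and the exact form of $l$ so that all five height inequalities transform correctly simultaneously while the Jacobian delivers precisely the density $\abs{\e_4\e_6^2}^{-1}$ produced by the first summation; the identification of $[-K_{\tS_3}]$, $[E_7]$ and the concluding algebra are then routine.
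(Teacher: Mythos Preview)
Your proposal is correct and follows the same approach as the paper: a monomial change of variables turning $\omega_\infty(\tS_3)$ into the $(\e_3,\e_7,\e_8)$-integral, together with \cite[Lemma~8.1]{arXiv:1302.6151} for $\alpha(\tS_3)$, then Fubini. Two cosmetic remarks: your substitution (with the factor $\e_6^{-1}$ in $z_0$ and $l=(B\abs{\e_1^2\e_2\e_4^3\e_5\e_6})^{1/2}$) differs from the paper's choice ($z_0=l^{-1/3}\e_5\e_8$, $z_1=l^{-1/3}\e_1^2\e_2\e_4^2\e_5\e_6^2\e_3$, $z_2=l^{-1/3}\e_1\e_2\e_4\e_6\e_7$ with $l=(B\abs{\e_1^2\e_2\e_4^3\e_5\e_6^4})^{1/2}$) only by the common phase $\e_6/|\e_6|$, so the height inequalities and Jacobian agree; and $E_7$ is in fact a $0$-curve rather than a negative curve (only one of the five blow-ups lies on $E_7^{(0)}$), so the paper simply states that $[E_1],\dots,[E_6]$ generate the effective cone, whereas you additionally verify $[E_7]=[E_1+E_3+E_4+E_5+E_6]$ is redundant---harmless, and your intersection checks go through.
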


\begin{proof}
  Let $\e_1, \e_2, \e_4, \e_5, \e_6 \in \CC$, $B>0$, and define $l :=
  (B \abs{\e_1^2\e_2\e_4^3\e_5\e_6^4})^{1/2}$. Let $\e_3, \e_7, \e_8$ be
  complex variables. After the coordinate transformation $z_0 = l^{-1/3}
    \e_5\cdot\e_8$, $z_1 = l^{-1/3} \e_1^2\e_2\e_4^2\e_5\e_6^2 \cdot \e_3$,
  $z_2 = l^{-1/3}\e_1\e_2\e_4\e_6\cdot\e_7$, we have
  \begin{equation}\label{eq:D4_complex_density_torsor}
    \omega_\infty(\tS_3) = \frac{12}{\pi}\frac{\abs{\e_1\e_2\e_4\e_5\e_6}}{B} \int_{(\e_1, \ldots, \e_8)\in\mathcal{R}(B)}\frac{1}{\abs{\e_4 \e_6^2}}\dd \e_3 \dd \e_7 \dd \e_8\text.
  \end{equation}

  Since the negative curves $[E_1], \dots, [E_6]$ generate the effective
  cone of $\tS_3$, and $[-K_{\tS_3}] = [4E_1+2E_2+3E_3+3E_4+2E_5+2E_6]$,
  \cite[Lemma 8.1]{arXiv:1302.6151} gives
  \begin{equation}\label{eq:D4_alpha}
    \alpha(\tS_3)(\log B)^5=\frac 1{3\pi^5}
    \int_{\substack{\abs{\e_1},\abs{\e_2},\abs{\e_4},\abs{\e_5},\abs{\e_6}\geq
        1\\\abs{\e_1^4\e_2^2\e_4^3\e_5^2\e_6^2}\leq B}}
    \frac{\dd \e_1\dd \e_2\dd \e_4\dd \e_5\dd
      \e_6}{\abs{\e_1\e_2\e_4\e_5\e_6}}\text.
  \end{equation}
  The lemma follows by substituting
  \eqref{eq:D4_complex_density_torsor} and \eqref{eq:D4_alpha} in
  \eqref{eq:D4_predicted_volume}.
\end{proof}

To finish our proof, we compare $V_0(B)$ from Lemma \ref{lem:D4_completion}
with $V_0'(B)$ defined in Lemma \ref{lem:D4_predicted_volume}. We show that,
starting from $V_0(B)$, we can add the condition
$\abs{\e_1^4\e_2^2\e_4^3\e_5^2\e_6^2} \le B$ and remove $\abs{\e_3} \ge 1$ with
negligible error. First, we note that \eqref{eq:D4_height_2}, together with
$\abs{\e_3} \ge 1$ implies the condition $\abs{\e_1^4\e_2^2\e_4^3\e_5^2\e_6^2}
\le B$, so we can add it to the domain of integration for $V_0(B)$ without
changing the result.

Using \cite[Lemma 3.4, (3)]{arXiv:1302.6151} applied to \eqref{eq:D4_height_5}
to bound the integral over $\e_7$, $\e_8$, we see that an upper bound for
$V_0'(B)-V_0(B)$ is given by
\begin{equation*}
  \ll
  \int_{\substack{\abs{\e_1},\abs{\e_2},\abs{\e_4},\abs{\e_5},\abs{\e_6}\geq 1\\\abs{\e_3} <
      1\text{, }\abs{\e_1^4\e_2^2\e_4^3\e_5^2\e_6^2} \le B}}\frac{B^{3/4}}{\abs{\e_2^2\e_3\e_4\e_5^2\e_6^2}^{1/4}}\dd
  \e_1\cdots\dd\e_6 \ll B(\log B)^4.
\end{equation*}

Using Lemma~\ref{lem:D4_completion} and
Lemma~\ref{lem:D4_predicted_volume}, this implies Theorem \ref{thm:main} for $S_3$.

\section{The quartic del Pezzo surface of type $\Dfive$}

\subsection{Passage to a universal torsor}
We use the notation of \cite{math.AG/0604194}, except that we switch
$\e_7$ with $\e_8$.
\begin{figure}[ht]
  \centering
  \[\xymatrix@R=0.05in @C=0.05in{E_7 \ar@{-}[r]\ar@{=}[dd] \ar@{-}[dr]  & \ex{5} \ar@{-}[r] & \ex{4}\ar@{-}[dr] \\
    & E_8\ar@{-}[r]  & \ex{3} \ar@{-}[r] & \ex{1}\\
    E_9 \ar@{-}[r]\ar@3{-}[ur] & \li{6} \ar@{-}[r] &
    \ex{2}\ar@{-}[ur]}\]
  \caption{Configuration of curves on $\tS_4$}
  \label{fig:D5_dynkin}
\end{figure}

For any given $\classtuple = (C_0, \dots, C_5) \in \classrep^6$, we
define $u_\classtuple := \N(C_0^3 C_1^{-1}\cdots C_5^{-1})$ and
\begin{align*}
    \OO_1 &:= C_3C_4^{-1} & \OO_2 &:= C_4C_5^{-1} & \OO_3 &:= C_0C_1^{-1}C_2^{-1}C_3^{-1}\\
    \OO_4 &:= C_2C_3^{-1} & \OO_5 &:= C_1C_2^{-1} & \OO_6 &:= C_5 \\
    \OO_7 &:= C_0 C_1^{-1} & \OO_8 &:= C_0 & \OO_9 &:= C_0^3 C_1^{-1}
    C_2^{-1} C_3^{-1} C_4^{-1} C_5^{-1}
\end{align*}
Let
\begin{equation*}
  \OO_{j*} :=
  \begin{cases}
    \OO_j^{\neq 0}, & i \in \{1,\ldots, 6\},\\
    \OO_j, & i \in \{7,8,9\}.
  \end{cases}
\end{equation*}
For $\eta_j \in \OO_j$, let
\begin{equation*}
  \eI_j := \e_j \OO_j^{-1}\text.
\end{equation*}
For $B \geq 0$, let $\mathcal{R}(B)$ be the set of all $(\e_1, \ldots, \e_8)
\in \CC^8$ with $\e_2\e_6 \neq 0$ and
\begin{align}
  \abs{\e_1^6\e_2^5\e_3^3\e_4^4\e_5^2\e_6^4}&\leq B,\label{eq:D5_height_1}\\
  \abs{\e_1^2\e_2\e_3\e_4^2\e_5^2\e_7^2}&\leq B,\label{eq:D5_height_2}\\
  \abs{\e_1^4\e_2^3\e_3^2\e_4^3\e_5^2\e_6^2\e_7}&\leq B,\label{eq:D5_height_3}\\
  \abs{\e_1^3\e_2^2\e_3^2\e_4^2\e_5\e_6\e_8}&\leq B,\label{eq:D5_height_4}\\
  \abs{\frac{\e_3\e_8^2 + \e_4\e_5^2\e_7^3}{\e_2\e_6^2}}&\leq B,\label{eq:D5_height_5}
\end{align}
and let $M_\classtuple(B)$ be the set of all
\begin{equation*}
  (\e_1, \ldots, \e_9) \in \OO_{1*} \times \cdots \times \OO_{9*} 
\end{equation*}
that satisfy the \emph{height conditions}
\begin{equation*}\label{eq:D5_height}
  (\e_1, \ldots, \e_8) \in \mathcal{R}(u_\classtuple B)\text,
\end{equation*}
the \emph{torsor equation}
\begin{equation}\label{eq:D5_torsor}
  \e_3\e_8^2 + \e_2\e_6^2\e_9 + \e_4\e_5^2\e_7^3 = 0,
\end{equation}
and the \emph{coprimality conditions}
\begin{equation}\label{eq:D5_coprimality}
  \eI_j + \eI_k = \OO_K \text{ for all distinct nonadjacent vertices $E_j$, $E_k$ in Figure~\ref{fig:D5_dynkin}.}
\end{equation}

\begin{lemma}\label{lem:D5_passage_to_torsor}
  We have
  \begin{equation*}
    N_{U_4,H}(B) = \frac{1}{\omega_K^6}\sum_{\classtuple \in \classrep^6}|M_\classtuple(B)|.
  \end{equation*}
\end{lemma}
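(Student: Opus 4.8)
The plan is to follow the template of Lemmas~\ref{lem:A4_passage_to_torsor} and~\ref{lem:D4_passage_to_torsor}: the assertion is the specialization of \cite[Claim~6.1]{arXiv:1302.6151} to the ideals $\OO_1,\ldots,\OO_9$, the quantity $u_\classtuple$, and the torsor equation \eqref{eq:D5_torsor} fixed above, and I would prove it by the method of \cite[Section~4]{arXiv:1302.6151}, realizing the minimal desingularization $\pi:\tS_4\to S_4$ as a composition of five blow-ups of $\PP^2_K$.

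First, fix the birational model. The inverse $\pi\circ\rho^{-1}:\PP^2_K\rto S_4$ of the projection $\phi=\rho\circ\pi^{-1}:S_4\rto\PP^2_K$, $(x_0:\cdots:x_4)\mapsto(x_0:x_2:x_3)$, is the map
\[
  \psi(y_0:y_1:y_2)=(y_0^3:y_0y_1^2:y_0^2y_1:y_0^2y_2:-(y_0y_2^2+y_1^3)),
\]
which one checks lands on $S_4$ by substituting into \eqref{eq:def_D5}. Its indeterminacy is resolved by blowing up the cusp $(1:0:0)$ of the cuspidal cubic $E_9^{(0)}=\{y_0y_2^2+y_1^3=0\}$ together with four further, infinitely near, points; using the data of \cite{math.AG/0604194} this gives the five blow-ups producing $\tS_4$ with the configuration of Figure~\ref{fig:D5_dynkin}, the cube $\e_7^3$ in \eqref{eq:D5_torsor} reflecting a length-three tower of infinitely near points swept out along $E_9^{(0)}$. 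The map $\Psi$ of \cite[Claim~4.2]{arXiv:1302.6151} then sends $(\e_1,\ldots,\e_9)$ to
\[
  (\e_1^6\e_2^5\e_3^3\e_4^4\e_5^2\e_6^4,\ \e_1^2\e_2\e_3\e_4^2\e_5^2\e_7^2,\ \e_1^4\e_2^3\e_3^2\e_4^3\e_5^2\e_6^2\e_7,\ \e_1^3\e_2^2\e_3^2\e_4^2\e_5\e_6\e_8,\ \e_9),
\]
whose five entries are, up to sign, the monomials on the left of \eqref{eq:D5_height_1}--\eqref{eq:D5_height_5} (for the last one uses \eqref{eq:D5_torsor} in the form $\e_9=-(\e_3\e_8^2+\e_4\e_5^2\e_7^3)/(\e_2\e_6^2)$). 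Writing $\Psi=(\Psi_0,\ldots,\Psi_4)$, one checks directly that $\Psi_0\Psi_1=\Psi_2^2$ identically and $\Psi_3^2+\Psi_0\Psi_4+\Psi_1\Psi_2=\e_1^6\e_2^4\e_3^3\e_4^4\e_5^2\e_6^2\cdot(\e_3\e_8^2+\e_2\e_6^2\e_9+\e_4\e_5^2\e_7^3)$, so that both equations in \eqref{eq:def_D5} hold modulo \eqref{eq:D5_torsor}; together with the verification of the hypotheses of \cite[Lemma~4.3]{arXiv:1302.6151}, this gives \cite[Claim~4.2]{arXiv:1302.6151} for $i=0$.

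Next, run through the five blow-ups one by one. Each blow-up of a point lying on exactly two of the curves is covered verbatim by \cite[Lemma~4.4]{arXiv:1302.6151}. The blow-ups at the points lying on three of the curves---those along the cusp of $E_9^{(0)}$ and the double and triple incidences visible in Figure~\ref{fig:D5_dynkin}---fall under \cite[Remark~4.5]{arXiv:1302.6151}: after introducing the new variable by choosing a representative $C\in\classrep$ with $[\eI_a'+\eI_b'+\eI_c']=[C^{-1}]$ and normalizing the remaining old variables, the coprimality conditions among the new and old $\e_j$ that are not automatic have to be extracted from \eqref{eq:D5_torsor} exactly as in the proof of Lemma~\ref{lem:A4_passage_to_torsor}: knowing already the coprimality of the new variable with the transverse variable supplied by the proof of \cite[Lemma~4.4]{arXiv:1302.6151}, the relation forces the remaining pairwise coprimalities. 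Having verified \cite[Claim~4.2]{arXiv:1302.6151} through $i=5$, hence \cite[Claim~6.2]{arXiv:1302.6151}, I would deduce \cite[Claim~6.1]{arXiv:1302.6151}---and with it the lemma---as in \cite[Lemma~9.1]{arXiv:1302.6151}; the factor $1/\omega_K^6$ accounts for the $|\OO_K^\times|$-fold ambiguity in each of the six choices $C_0,\ldots,C_5$ of ideal-class representatives.

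The step I expect to be the main obstacle is the bookkeeping for the blow-ups at points lying on three curves: there are more of these than in the $\Athree+\Aone$ case, and, because of the cuspidal cubic, one induced tower of infinitely near points has length three, so the coprimality relations forced by \eqref{eq:D5_torsor} must be read off with some care. Everything else is a direct transcription of the general machinery of \cite[Section~4]{arXiv:1302.6151}.
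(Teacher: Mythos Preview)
Your approach matches the paper's: realize $\tS_4$ via five blow-ups of $\PP^2_K$, establish \cite[Claim~4.2]{arXiv:1302.6151} for $i=0$ via \cite[Lemma~4.3]{arXiv:1302.6151}, handle the triple-incidence blow-ups via \cite[Remark~4.5]{arXiv:1302.6151} with coprimality conditions read off from \eqref{eq:D5_torsor}, cover the remaining blow-ups by \cite[Lemma~4.4]{arXiv:1302.6151}, and conclude as in \cite[Lemma~9.1]{arXiv:1302.6151}.

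There is, however, a concrete slip in your identification of the base locus. The indeterminacy of $\psi$ is at $(0:0:1)$, \emph{not} at the cusp $(1:0:0)$ of $E_9^{(0)}$: the entries of $\psi(y_0:y_1:y_2)=(y_0^3,\ldots,-(y_0y_2^2+y_1^3))$ vanish simultaneously only where $y_0=y_1=0$. The point $(0:0:1)$ is a smooth inflection point of the cuspidal cubic---the line $E_3^{(0)}=\{y_0=0\}$ meets it there with multiplicity~$3$---and all five blow-ups lie over it. The paper makes this explicit by naming the starting curves $E_3^{(0)}=\{y_0=0\}$, $E_7^{(0)}=\{y_1=0\}$, $E_8^{(0)}=\{y_2=0\}$, $E_9^{(0)}=\{-y_0y_2^2-y_1^3=0\}$ and listing the centers: steps (1)--(3) blow up the tower $E_3\cap E_7\cap E_9$, $E_3\cap E_5\cap E_9$, $E_3\cap E_4\cap E_9$ (triple incidences, so \cite[Remark~4.5]{arXiv:1302.6151} applies and the paper writes out the coprimality checks in each case), while steps (4) and (5) blow up $E_1\cap E_9$ and $E_2\cap E_9$ (double incidences, so \cite[Lemma~4.4]{arXiv:1302.6151} suffices). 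A related misreading: the cube $\e_7^3$ in \eqref{eq:D5_torsor} is not produced by a length-three tower of blow-ups; it is simply the surviving $y_1^3$ term of the cubic, since $E_7^{(0)}=\{y_1=0\}$ is one of the original curves in $\PP^2_K$ and is never blown up.
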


\begin{proof}
  The lemma is a special case of \cite[Claim 4.1]{arXiv:1302.6151}, which, as
  before, we prove by proving first \cite[Claim 4.2]{arXiv:1302.6151}, starting
  from the curves $E_3^{(0)}= \{y_0=0\}$, $E_7^{(0)} := \{y_1=0\}$, $E_8^{(0)} := \{y_2 = 0\}$, $E_9^{(0)} :=
  \{-y_0y_2^2 - y_1^3 = 0\}$ in $\PP_K^2$, for the sequence of blow-ups
  \begin{enumerate}
  \item blow up $E_3^{(0)}\cap E_7^{(0)} \cap E_9^{(0)}$, giving $E_5^{(1)}$,
  \item blow up $E_3^{(1)}\cap E_5^{(1)} \cap E_9^{(1)}$, giving $E_4^{(2)}$,
  \item blow up $E_3^{(2)}\cap E_4^{(2)} \cap E_9^{(2)}$, giving $E_1^{(3)}$,
  \item blow up $E_1^{(3)} \cap E_9^{(3)}$, giving $E_2^{(4)}$,
  \item blow up $E_2^{(4)} \cap E_9^{(4)}$, giving $E_6^{(5)}$.
  \end{enumerate}

  With the inverse $\pi\circ\rho^{-1} : \PP^2_K \dasharrow S_4$ of the
  projection $\rho\circ\pi^{-1} : S_4 \rto \PP^2_K$, $(x_0 : \cdots : x_4)
  \mapsto (x_0 : x_2 : x_3)$ given by
  \begin{equation*}
    (y_0 : y_1 : y_2) \mapsto (y_0^3 : y_0y_1^2 : y_0^2y_1 : y_0^2y_2 : -(y_0y_2^2 + y_1^3))\text,
  \end{equation*}
  and the map $\Psi$ from \cite[Claim 4.2]{arXiv:1302.6151} sending $(\e_1,
  \ldots, \e_9)$ to
  \begin{equation*}
    (\e_1^6\e_2^5\e_3^3\e_4^4\e_5^2\e_6^4, \e_1^2\e_2\e_3\e_4^2\e_5^2\e_7^2,
    \e_1^4\e_2^3\e_3^2\e_4^3\e_5^2\e_6^2\e_7,\e_1^3\e_2^2\e_3^2\e_4^2\e_5\e_6\e_8, \e_9),
  \end{equation*}
  we see that the requirements of \cite[Lemma 4.3]{arXiv:1302.6151} are
  satisfied, so \cite[Claim 6.2]{arXiv:1302.6151} holds for $i=0$.

  As in the proof of Lemma~\ref{lem:A4_passage_to_torsor}, we apply
  \cite[Remark~4.5]{arXiv:1302.6151} for steps (1), (2), (3).
  For (1), we define $\e_5'' \in C_1$ with $[I_3'+I_7'+I_9']=[C_1^{-1}]$ such
  that $I_5''=I_3'+I_7'+I_9'$. We must use the relation
  $\e_3''\e_8''^2+\e_9''+\e_5''^2\e_7''^3=0$ to check the coprimality
  conditions for $\e_3'', \e_5'', \e_7'', \e_9''$, namely
  $\eI_3''+\eI_7''=\OO_K$ (this holds because of the relation and
  $\eI_3''+\eI_7''+\eI_9''=\OO_K$ by construction) and
  $\eI_5''+\eI_7''+\eI_9''=\OO_K$ (this holds because of the relation and
  $\eI_3''+\eI_7''+\eI_9''=\OO_K$ by construction and the coprimality
  condition $I_5''+I_8''=\OO_K$ provided by the proof of
  \cite[Lemma~4.4]{arXiv:1302.6151}).

  For (2), we define $\e_4'' \in C_2$ with $[I_3'+I_5'+I_9']=[C_2^{-1}]$ such
  that $I_4''=I_3'+I_5'+I_9'$. The relation is
  $\e_3''\e_8''^2+\e_9''+\e_4''\e_5''^2\e_7''^3=0$. We check the coprimality
  conditions $I_3''+I_5''=\OO_K$ (this holds because of the relation and
  $I_3''+I_5''+I_9''=\OO_K$ by construction) and $I_5''+I_9''=\OO_K$ (this
  holds because of the relation and $I_3''+I_5''=\OO_K$ as just shown and
  $I_5''+I_8''=\OO_K$ by the proof of \cite[Lemma~4.4]{arXiv:1302.6151}).

  For (3), we define $\e_1'' \in C_3$ with $[I_3'+I_4'+I_9']=[C_3^{-1}]$ such
  that $I_1''=I_3'+I_4'+I_9'$. The relation is
  $\e_3''\e_8''^2+\e_9''+\e_4''\e_5''^2\e_7''^3=0$. We check the coprimality
  conditions are $I_3''+I_4''=\OO_K$ (this holds because of the relation and
  $I_3''+I_4''+I_9''=\OO_K$ by construction), $I_3''+I_9''=\OO_K$ (this holds
  because of the relation and $I_3''+I_4''=\OO_K$ as just shown
  $I_3''+I_5''=\OO_K$ as before and $I_3''+I_7''=\OO_K$ as before) and
  $I_4''+I_9''=\OO_K$ (this holds because of the relation and
  $I_3''+I_9''=\OO_K$ as just shown and $I_4''+I_8''=\OO_K$ as before).

  For (4) and (5), we can apply \cite[Lemma~4.4]{arXiv:1302.6151}. This proves
  \cite[Claim 6.2]{arXiv:1302.6151}, and we deduce \cite[Claim
  6.1]{arXiv:1302.6151} as in \cite[Lemma 9.1]{arXiv:1302.6151}.
\end{proof}

\subsection{Summations}
\subsubsection{The first summation over $\e_8$ with dependent $\e_9$}
Let $\ee' := (\e_1, \ldots, \e_7)$ and $\eII' := (\eI_1, \ldots, \eI_7)$. Let
$\theta_0(\eII') := \prod_\p\theta_{0,\p}(J_\p(\eII'))$, where $J_\p(\eII') :=
\{j \in \{1, \ldots, 7\}\ :\  \p \mid \eI_j\}$ and
\begin{equation*}
  \theta_{0,\p}(J):=
  \begin{cases}
    1 &\text{ if } J = \emptyset, \{1\}, \{2\}, \{3\}, \{4\}, \{5\}, \{6\},
    \{7\},\\
    \ &\text{ or } J= \{1,2\}, \{1,3\}, \{1,4\}, \{2,6\}, \{4,5\}, \{5,7\},\\
    0 &\text{ otherwise.} 
  \end{cases}
\end{equation*}

We apply \cite[Proposition 5.3]{arXiv:1302.6151} with $(A_1, A_2, A_0) :=
(4,5,7)$, $(B_1, B_0) := (3, 8)$, $(C_1, C_2, C_0):=(2,6,9)$, and $D := 1$. For
given $\e_1$, $\e_2$, $\e_6$, we write
\begin{equation*}
  \e_4\e_5^2\e_7^3 = \e_{A_0}^{a_0}\Pi(\eeA) = \Ao \At^2,
\end{equation*}
where $\Ao$, $\At$ are chosen as follows: Let $\id A = \id A(\e_1, \e_2, \e_6)$ be a prime ideal not
dividing $\eI_1\eI_2\eI_6$ such that $\id A\OO_7^{-1} \OO_8 = \id A C_1$
is a principal fractional ideal $t\OO_K$, for a suitable $t = t(\e_1, \e_2, \e_6) \in
K^\times$. Then we define $\At = \At(\e_1, \e_2, \e_6) := \e_7t$ and $\Ao := \Ao(\e_1,
\e_2, \e_6) := \e_4\e_5^2\e_7t^{-2}$.

\begin{lemma}\label{lem:D5_first_summation}
  We have
  \begin{equation*}
    |M_\classtuple(B)| = \frac{2}{\sqrt{|\Delta_K|}}\sum_{\ee' \in \OO_{1*} \times \dots \times \OO_{7*}} \theta_8(\ee', \classtuple)V_8(\N\eI_1,\ldots,\N\eI_7; B) + O_\classtuple(B),
  \end{equation*}
  where
  \begin{equation*}
    V_8(t_1, \ldots, t_7; B) :=  \frac{1}{t_2 t_6^2}\int_{(\sqrt{t_1}, \ldots,
      \sqrt{t_7}, \e_8)\in\mathcal{R}(B)} \dd \e_8,
  \end{equation*}
  with a complex variable $\e_8$. Moreover,
  \begin{equation*}
    \theta_8(\ee', \classtuple) := \sum_{\substack{\kc \mid \eI_1\eI_2\\\kc + \eI_3\eI_4 =
        \OO_K}}\frac{\mu_K(\kc)}{\N\kc}\tilde\theta_8(\eII',\kc)\sum_{\substack{\rho
        \mod \kc\eI_2\eI_6^2\\\rho\OO_K + \kc\eI_2\eI_6^2 = \OO_K \\\rho^2 \equiv_{\kc\eI_2\eI_6^2}         \e_7 A}}1\text,
  \end{equation*}
  with
  \begin{equation*}
    \tilde\theta_8(\eII', \kc) :=
    \theta_0(\eII')\frac{\phi_K^*(\eI_1\eI_4\eI_5)}{\phi_K^*(\eI_1 + \kc\eI_2\eI_6)}\text.
  \end{equation*}
  Here, $A := -\e_4\e_5^2/(t(\e_1,\e_2,\e_6)^2\e_3)$, and $\e_7 A$ is
  invertible modulo $\kc\eI_2\eI_6^2$ whenever $\theta_0(\eII')\neq 0$.
\end{lemma}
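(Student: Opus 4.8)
The plan is to apply \cite[Proposition 5.3]{arXiv:1302.6151} with the data $(A_1,A_2,A_0):=(4,5,7)$, $(B_1,B_0):=(3,8)$, $(C_1,C_2,C_0):=(2,6,9)$, $D:=1$, and $u_\classtuple B$ in place of $B$, in complete analogy with the proofs of Lemma~\ref{lem:A4_first_summation} and Lemma~\ref{lem:A3+A1_first_summation}. In the torsor equation \eqref{eq:D5_torsor} the variable $\e_9$ occurs linearly with coefficient $\e_2\e_6^2$, and the remaining part $\e_4\e_5^2\e_7^3$, which has neither a linear nor a constant term in $\e_8$, is split as $\Ao\At^2$ via the prime ideal $\id A$ and the element $t$ fixed before the lemma; thus all hypotheses of \cite[Proposition 5.3]{arXiv:1302.6151} are met.

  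First I would check that the proposition reproduces the main term stated here: unwinding the definitions shows that the arithmetic function $\theta_1(\ee')$ provided by \cite[Proposition 5.3]{arXiv:1302.6151} coincides with $\theta_8(\ee',\classtuple)$ as defined above (the sum over $\kc \mid \eI_1\eI_2$ coprime to $\eI_3\eI_4$, the factor $\tilde\theta_8(\eII',\kc)$, and the count of $\rho \bmod \kc\eI_2\eI_6^2$ with $\rho^2 \equiv \e_7 A$), and a short computation as in the proof of \cite[Lemma 9.2]{arXiv:1302.6151} identifies $V_1(\ee';u_\classtuple B)$ with $V_8(\N\eI_1,\ldots,\N\eI_7;B)$ after substituting $\e_j = \sqrt{\N\eI_j}$ for $j \le 7$ and using the definition of $u_\classtuple$. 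The assertion that $v_\p(\e_7 A) = 0$ for every $\p \mid \kc\eI_2\eI_6^2$ whenever $\theta_0(\eII') \neq 0$ follows exactly as the analogous claim in Lemma~\ref{lem:A4_first_summation}: the coprimality conditions encoded by $\theta_0(\eII') \neq 0$, combined with $\kc + \eI_3\eI_4 = \OO_K$, force this.

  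It remains to bound the error term coming from \cite[Proposition 5.3]{arXiv:1302.6151} by $O_\classtuple(B)$. As in Lemma~\ref{lem:A4_first_summation}, for fixed $\ee'$ the set of $\e_8 \in \CC$ with $(\e_1,\ldots,\e_8) \in \mathcal{R}(u_\classtuple B)$ has bounded class, and applying \cite[Lemma 3.5, (1)]{arXiv:1302.6151} to \eqref{eq:D5_height_5} (a quadratic in $\e_8$ with leading coefficient $\e_3 \neq 0$) shows that this set lies in at most two balls of radius $R(\ee';u_\classtuple B) \ll_\classtuple (B\,\N(\eI_2\eI_6^2\eI_3^{-1}))^{1/4}$. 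Substituting this into the error term and replacing the sum over the $\e_j$ by a sum over the ideals $\eI_j$ (legitimate since $|\OO_K^\times| < \infty$), one is reduced to estimating a sum of the shape
  \begin{equation*}
    \sum_{\ee'}2^{\omega_K(\cdots)}\left(\frac{R(\ee';u_\classtuple B)}{\N(\eI_2\eI_6^2)^{1/2}} + 1\right),
  \end{equation*}
  where $\ee'$ ranges over the ideals satisfying the constraints imposed by \eqref{eq:D5_height_1} and \eqref{eq:D5_height_3}; one may treat the locus $\e_7 = 0$ separately, exactly as in the proof of Lemma~\ref{lem:D4_first_summation}, where it contributes a smaller power of $B$.

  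I expect this last estimate to be the main (though routine) point. Because the error term must be $O(B)$ without any logarithmic factor, one has to use the height conditions so that, once the contribution of $\e_8$ is absorbed, every remaining variable is weighted by $\N\eI_j^{-\sigma_j}$ with $\sigma_j > 1$; the large exponents occurring in \eqref{eq:D5_height_1} and \eqref{eq:D5_height_3} (together with the extra factor $\N\eI_6$ from $\e_6^2$) provide exactly this room, and the factors $2^{\omega_K(\cdots)}$ are harmless since $\sum_\aaa 2^{\omega_K(\aaa)}\N\aaa^{-\sigma}$ converges for every $\sigma > 1$. Carrying this out as in the earlier first-summation lemmas yields the claimed error term $O_\classtuple(B)$.
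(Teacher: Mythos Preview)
Your overall strategy matches the paper's: apply \cite[Proposition~5.3]{arXiv:1302.6151} with the same data, identify the main term, then bound the error by summing over ideals using the height inequalities. The main term and the treatment of $\e_7=0$ are fine.

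There is, however, a concrete gap in the error estimate for $\e_7\neq 0$. With the radius bound you quote, $R\ll_\classtuple (B\,\N(\eI_2\eI_6^2\eI_3^{-1}))^{1/4}$, the summand $R/\N(\eI_2\eI_6^2)^{1/2}\ll B^{1/4}\N\eI_2^{-1/4}\N\eI_3^{-1/4}\N\eI_6^{-1/2}$ carries no weight in $\eI_7$ at all. If you then restrict to \eqref{eq:D5_height_1} and \eqref{eq:D5_height_3} as you say, the only control on $\eI_7$ is linear via \eqref{eq:D5_height_3}, and the sum over $\eI_7$ contributes a full factor $B/\N(\eI_1^4\eI_2^3\eI_3^2\eI_4^3\eI_5^2\eI_6^2)$; the resulting bound is $\ll B^{5/4}$, not $O(B)$, so the claim that ``the large exponents in \eqref{eq:D5_height_1} and \eqref{eq:D5_height_3} provide exactly this room'' is false as stated.

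The paper avoids this by not using your radius bound but the \emph{geometric mean} of the two expressions in the minimum of \cite[Lemma~3.5,~(1)]{arXiv:1302.6151}, obtaining
\[
R\ll_\classtuple B^{3/8}\,\N(\eI_2^{3}\eI_3^{-2}\eI_4^{-1}\eI_5^{-2}\eI_6^{6}\eI_7^{-3})^{1/8},
\]
which carries a factor $\N\eI_7^{-3/8}$. With this, summing over $\eI_7$ via \eqref{eq:D5_height_3} gives $(B/\dots)^{5/8}$ instead of $B/\dots$, and all remaining exponents exceed $1$, yielding $O_\classtuple(B)$. Alternatively, your radius bound would still work provided you also invoke \eqref{eq:D5_height_2} (which bounds $\N\eI_7^2$) to sum over $\eI_7$; this gives $O_\classtuple(B^{7/8})$. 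Either refinement is short, but one of them is needed.
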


\begin{proof}
  As in Lemma \ref{lem:A4_first_summation}, we see that the main term from
  \cite[Proposition 5.3]{arXiv:1302.6151} is the one given in the lemma. Let us
  consider the error term. For given $\ee'$, the set of all $\e_8$ with $(\e_1,
  \ldots, \e_8) \in \mathcal{R}(u_\classtuple B)$ has bounded class and is
  contained in two balls of radius
  \begin{equation*}
    R(\ee'; u_\classtuple B) \ll
    \begin{cases}
       B^{3/8}\N(\eI_2^{3}\eI_3^{-2}\eI_4^{-1}\eI_5^{-2}\eI_6^{6}\eI_7^{-3})^{1/8}&\text{ if }\e_7\neq 0\\
      (B/\N(\eI_1^3\eI_2^2\eI_3^2\eI_4^2\eI_5\eI_6))^{1/2} &\text{ if
      }\e_7 = 0\text.
    \end{cases}
  \end{equation*}
  If $\eta_7 \neq 0$ this follows from taking the geometric mean of both
  expressions in the minimum in \cite[Lemma 3.5, (1)]{arXiv:1302.6151} applied to
  \eqref{eq:D5_height_5}. If $\eta_7 = 0$ then it follows from
  \eqref{eq:D5_height_4}.
  Thus, the error term is
  \begin{equation*}
    \ll \sum_{\ee'\text{, }\eqref{eq:D5_first_height_cond_ideals}\text{, }\eqref{eq:D5_third_height_cond_ideals}}2^{\omega_K(\eI_1\eI_2)+\omega_K(\eI_1\eI_4\eI_5)+\omega_K(\eI_1\eI_2\eI_6)}\left(\frac{R(\ee'; u_\classtuple B)}{\N(\eI_2 \eI_6^2)^{1/2}}+1\right)\text,
  \end{equation*}
  where, using \eqref{eq:D5_height_1} and \eqref{eq:D5_height_3}, the sum runs over all $\ee' \in \OO_{1*}\times\cdots\times\OO_{7*}$ with
  \begin{align}\label{eq:D5_first_height_cond_ideals}
    \N(\eI_1^6\eI_2^5\eI_3^3\eI_4^4\eI_5^2\eI_6^4) &\leq B \text{, and }\\
    \N(\eI_1^4\eI_2^3\eI_3^2\eI_4^3\eI_5^2\eI_6^2\eI_7) &\leq B
    \text.\label{eq:D5_third_height_cond_ideals}
  \end{align}

  The sum over all $\ee'$ with $\e_7 \neq 0$ is bounded by
  \begin{align*}
    &\ll_\classtuple \sum_{\eII'\text{, }\eqref{eq:D5_third_height_cond_ideals}}2^{\omega_K(\eI_1\eI_2)+\omega_K(\eI_1\eI_4\eI_5)+\omega_K(\eI_1\eI_2\eI_6)}\left(\frac{B^{3/8}}{(\N\eI_2\N\eI_3^2\N\eI_4\N\eI_5^2\N\eI_6^2\N\eI_7^3)^{1/8}}+1\right)\\
    &\ll\sum_{\substack{\eI_1, \dots, \eI_6\\\N\eI_j\leq B}}\left(\frac{2^{\omega_K(\eI_1\eI_2)+\omega_K(\eI_1\eI_4\eI_5)+\omega_K(\eI_1\eI_2\eI_6)}B}{\N\eI_1^{5/2}\N\eI_2^2\N\eI_3^{3/2}\N\eI_4^2\N\eI_5^{3/2}\N\eI_6^{3/2}}+\frac{2^{\omega_K(\eI_1\eI_2)+\omega_K(\eI_1\eI_4\eI_5)+\omega_K(\eI_1\eI_2\eI_6)}B}{\N\eI_1^4\N\eI_2^3\N\eI_3^2\N\eI_4^3\N\eI_5^2\N\eI_6^2}\right)\\
    &\ll B.
  \end{align*}
  The sum over all $\ee'$ with $\e_7 = 0$ is bounded by
  \begin{align*}
    &\ll_\classtuple \sum_{\substack{\eI_1, \ldots, \eI_6\\\eqref{eq:D5_first_height_cond_ideals}}}2^{\omega_K(\eI_1\eI_2)+\omega_K(\eI_1\eI_4\eI_5)+\omega_K(\eI_1\eI_2\eI_6)}\left(\frac{B^{1/2}}{(\N\eI_1^3\N\eI_2^3\N\eI_3^2\N\eI_4^2\N\eI_5\N\eI_6^3)^{1/2}}+1\right)\\
    &\ll\sum_{\substack{\eI_1, \dots, \eI_4, \eI_6\\\N\eI_j \leq 1}}2^{\omega_K(\eI_1\eI_2)}\left(\frac{2^{\omega_K(\eI_1\eI_4\eI_5)+\omega_K(\eI_1\eI_2\eI_6)}B^{3/4}}{\N\eI_1^{3}\N\eI_2^{11/4}\N\eI_3^{7/4}\N\eI_4^2\N\eI_6^{5/2}}+\frac{2^{\omega_K(\eI_1\eI_4\eI_5)+\omega_K(\eI_1\eI_2\eI_6)}B^{1/2}}{\N\eI_1^3\N\eI_2^{5/2}\N\eI_3^{3/2}\N\eI_4^2\N\eI_6^2}\right)\\
    &\ll B^{3/4} + B^{1/2} \ll B^{3/4}.\qedhere
  \end{align*}
\end{proof}

\subsubsection{The second summation over $\e_7$}
We define  
 \begin{equation*}
    \theta_8'(\eII') := \sum_{\substack{\kc \mid \eI_1\eI_2\\\kc + \eI_3\eI_4 =
        \OO_K}}\frac{\mu_K(\kc)}{\N\kc}\tilde\theta_8(\eII',\kc).
  \end{equation*}

\begin{lemma}\label{lem:D5_second_summation}
  Write $\ee'' := (\e_1, \dots, \e_6)$. We have
  \begin{align*}
    |M_\classtuple(B)| &=\left(\frac{2}{\sqrt{|\Delta_K|}}\right)^2 \sum_{\ee''\in \OO_{1*} \times \dots \times \OO_{6*}} \mathcal{A}(\theta_8'(\eII'),\eI_7)V_7(\N\eI_1,\ldots,\N\eI_6; B)\\ &+ O_\classtuple(B(\log B)^3),
  \end{align*}
  where, for $t_1,\ldots,t_6 \geq 1$,
  \begin{equation*}
    V_7(t_1, \ldots, t_6; B) := \frac{\pi}{t_2 t_6^2}\int_{(\sqrt{t_1}, \ldots, \sqrt{t_7}, \e_8)\in\mathcal{R}(B)} \dd t_7 \dd \e_8,
  \end{equation*}
  with a positive variable $t_7$ and a complex variable $\e_8$.
\end{lemma}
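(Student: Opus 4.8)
The plan is to carry out the sum over $\e_7$ by the method of \cite[Section 6]{arXiv:1302.6151} in the case $b_0 \geq 2$, proceeding as in the proof of Lemma~\ref{lem:A4_second_summation_Ma}. Starting from Lemma~\ref{lem:D5_first_summation} and inserting the definition of $\theta_8(\ee', \classtuple)$, I would interchange the order of summation to write
\begin{equation*}
  |M_\classtuple(B)| = \frac{2}{\sqrt{|\Delta_K|}}\sum_{\ee'' \in \OO_{1*} \times \dots \times \OO_{6*}}\ \sum_{\substack{\kc \mid \eI_1\eI_2\\\kc + \eI_3\eI_4 = \OO_K}}\frac{\mu_K(\kc)}{\N\kc}\,\Sigma + O_\classtuple(B),
\end{equation*}
where
\begin{equation*}
  \Sigma := \sum_{\e_7 \in \OO_{7*}}\vartheta(\eI_7)\sum_{\substack{\rho \bmod \kc\eI_2\eI_6^2\\\rho\OO_K + \kc\eI_2\eI_6^2 = \OO_K\\\rho^2 \equiv_{\kc\eI_2\eI_6^2} \e_7 A}}g(\N\eI_7),
\end{equation*}
with $\vartheta(\eI_7) := \tilde\theta_8(\eII', \kc)$ and $g(t) := V_8(\N\eI_1, \ldots, \N\eI_6, t; B)$; here the $O_\classtuple(B)$ is the error already present in Lemma~\ref{lem:D5_first_summation}.

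Next I would check the hypotheses of \cite[Proposition 6.1]{arXiv:1302.6151}. On the arithmetic side, $\vartheta$ does not depend on $\eI_7$ except through the coprimality conditions encoded by $\theta_0$, so by \cite[Lemma 5.5, Lemma 2.2]{arXiv:1302.6151} it satisfies \cite[(6.1)]{arXiv:1302.6151} with $C = 0$ and a suitable $c_\vartheta = 2^{\omega_K(\cdots)}$. On the analytic side, \eqref{eq:D5_height_2} (or \eqref{eq:D5_height_3}) shows that $g(t) = 0$ for $t$ larger than an explicit $t_2$, and \cite[Lemma 3.5, (2)]{arXiv:1302.6151} applied to \eqref{eq:D5_height_5} gives an upper bound for $g(t)$ of the shape $B/(\N\eI_2^{1/2}\cdots)\cdot t^{-3/2}$, in particular a $t$-free bound $g(t) \ll B^{1/2}/(\N\eI_2^{1/2}\N\eI_3^{1/2}\N\eI_6)$. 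Applying \cite[Proposition 6.1]{arXiv:1302.6151} then replaces $\Sigma$ by
\begin{equation*}
  \frac{2\pi}{\sqrt{|\Delta_K|}}\,\phi_K^*(\kc\eI_2\eI_6^2)\,\mathcal{A}(\vartheta(\aaa), \aaa, \kc\eI_2\eI_6^2)\int_{t \geq 1}g(t)\dd t
\end{equation*}
plus the term $\vartheta((0))g(0)$ and an error term; the first two of these, together with the discrepancy $\int_0^1 g(t)\dd t$ between the range $t \geq 1$ and the range of $t_7$ in $V_7$, are absorbed into the error by means of \eqref{eq:D5_height_1}. Re-summing the $\kc$-sum against $\phi_K^*(\kc\eI_2\eI_6^2)\mathcal{A}(\vartheta(\aaa), \aaa, \kc\eI_2\eI_6^2)$ via \cite[Lemma 6.3]{arXiv:1302.6151} produces $\mathcal{A}(\theta_8'(\eII'), \eI_7)$, and rewriting $\int_{t \geq 1}g(t)\dd t$ with a real variable $t_7$ yields exactly the integral defining $V_7(\N\eI_1, \ldots, \N\eI_6; B)$, with the leftover factor $\pi$ absorbed into $V_7$; this gives the claimed main term.

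The main obstacle is the error-term estimate. Here I would collect the error from \cite[Proposition 6.1]{arXiv:1302.6151} --- a combination of a ``smooth'' contribution and a contribution proportional to $\N(\kc\eI_2\eI_6^2)\log B$ --- replace the sum over the $\e_j$ by a sum over the ideals $\eI_j$ (permissible since $|\OO_K^\times| < \infty$), carry out the $\kc$-summation using $\sum_{\kc \mid \aaa}1 = 2^{\omega_K(\aaa)}$, and bound the remaining sum over $(\eI_1, \ldots, \eI_6)$ using the height conditions \eqref{eq:D5_height_1} and \eqref{eq:D5_height_3} to truncate the ranges together with $\sum_{\N\aaa \leq x}2^{\omega_K(\aaa)}/\N\aaa \ll (\log x)^2$. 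The exponents in these monomial conditions must conspire so that the tail sums converge after extracting at most three logarithmic factors, leaving $O_\classtuple(B(\log B)^3)$; verifying this --- and, should a naive application fail to produce a good enough error, first replacing $\vartheta$ by the function obtained from it by dropping the coprimality condition that is automatically forced by the $\rho$-sum, as in the proof of Lemma~\ref{lem:A4_second_summation_Mb}, using that $\e_7 A$ is invertible modulo $\kc\eI_2\eI_6^2$ by Lemma~\ref{lem:D5_first_summation} --- is the delicate, though essentially routine, part of the argument.
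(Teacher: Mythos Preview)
Your proposal is correct and follows essentially the same route as the paper: one applies \cite[Proposition~6.1]{arXiv:1302.6151} with $c_\vartheta = 2^{\omega_K(\eI_1\eI_2\eI_3\eI_4\eI_6)}$, the $t$-free bound $g(t) \ll B^{1/2}/(\N\eI_2^{1/2}\N\eI_3^{1/2}\N\eI_6)$ from \cite[Lemma~3.5,~(2)]{arXiv:1302.6151} and \eqref{eq:D5_height_5}, and $t_2$ coming from \eqref{eq:D5_height_2}, and then bounds the error summing over ideals using only \eqref{eq:D5_first_height_cond_ideals}. The fallback trick from Lemma~\ref{lem:A4_second_summation_Mb} turns out not to be needed here (and your intermediate ``$t^{-3/2}$'' shape for $g$ is not quite right---the bound from \cite[Lemma~3.5,~(2)]{arXiv:1302.6151} is already $t$-independent---but your stated $t$-free bound is the correct one).
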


\begin{proof}
  We write
  \begin{equation*}
    |M_\classtuple(B)| = \frac{2}{\sqrt{|\Delta_K|}}\sum_{\ee'' \in \OO_{1*} \times \dots \times \OO_{6*}} \sum_{\substack{\kc \mid \eI_1\eI_2\\\kc+ \eI_3\eI_4 = \OO_K}}\frac{\mu_K(\kc)}{\N\kc}\Sigma + O_\classtuple(B),
  \end{equation*}
  where
  \begin{equation*}
    \Sigma := \sum_{\e_7 \in
      \OO_{7}}\vartheta(\eI_7)\sum_{\substack{\rho \mod
        \kc\eI_2\eI_6^2\\\rho\OO_K + \kc\eI_2\eI_6^2=\OO_K \\\rho^2 \equiv_{\kc\eI_2\eI_6^2} \e_7 A}}g(\N\eI_7)\text,
  \end{equation*}
  with $\vartheta(\eI_7) := \tilde\theta_8(\eII',\kc)$ and $g(t):=
  V_8(\N\eI_1,\ldots,\N\eI_6, t; B)$. As in Lemma
  \ref{lem:A4_second_summation_Ma}, the function $\vartheta$ satisfies
  \cite[(6.1)]{arXiv:1302.6151} with $C:=0$,
  $c_\vartheta:=2^{\omega_K(\eI_1\eI_2\eI_3\eI_4\eI_6)}$.  Moreover, by
  \eqref{eq:D5_height_2}, we have $g(t) = 0$ whenever $t > t_2 :=
  B^{1/2}/(\N\eI_1 \N\eI_2^{1/2}\N\eI_3^{1/2}\N\eI_4\N\eI_5)$, and by
  \cite[Lemma 3.5, (2)]{arXiv:1302.6151} applied to \eqref{eq:D5_height_5}, we
  obtain $g(t) \ll B^{1/2}/(\N\eI_2^{1/2}\N\eI_3^{1/2}\N\eI_6)$.

  By \cite[Proposition 6.1]{arXiv:1302.6151}, we have
  \begin{align*}
    \Sigma &= \vartheta((0))g(0) + \frac{2 \pi}{\sqrt{|\Delta_K|}}\phi_K^*(\kc
    \eI_2\eI_6^2)\mathcal{A}(\vartheta(\aaa), \aaa, \kc\eI_2\eI_6^2)\int_{t\geq
      1}g(t)\dd t\\ &+ O\left(\frac{2^{\omega_K(\eI_1\eI_2\eI_3\eI_4\eI_6)}
        B^{1/2}}{\N\eI_2^{1/2}\N\eI_3^{1/2}\N\eI_6}\left(\frac{B^{1/4}\N\kc^{1/2}\N\eI_2^{1/4}\N\eI_6}{\N\eI_1^{1/2}\N\eI_3^{1/4}\N\eI_4^{1/2}\N\eI_5^{1/2}}
        + \N(\kc\eI_2\eI_6^2)\log B\right)\right)\text,
  \end{align*}
  where the contribution of $\vartheta((0))g(0)$ and of $\int_0^1g(t)\dd t$ is
  dominated by the error term.  Using \cite[Lemma 6.3]{arXiv:1302.6151}, we
  see that this gives the correct main term in the lemma.

  Summing the error term over $\eI_j$ and $\kc$, we obtain an upper bound
  \begin{align*}
    &\sum_{\substack{\eI_1, \ldots, \eI_6\\\eqref{eq:D5_first_height_cond_ideals}}}\left(\frac{2^{\omega_K(\eI_1\eI_2) + \omega_K(\eI_1\cdots\eI_4\eI_6)} B^{3/4}}{\N\eI_1^{1/2}\N\eI_2^{1/4}\N\eI_3^{3/4}\N\eI_4^{1/2}\N\eI_5^{1/2}} + \frac{2^{\omega_K(\eI_1\eI_2) + \omega_K(\eI_1\cdots\eI_4\eI_6)}B^{1/2}\log B}{\N\eI_2^{-1/2}\N\eI_3^{1/2}\N\eI_6^{-1}} \right)\\
    &\ll \sum_{\substack{\eI_1, \ldots, \eI_5\\\N\eI_j \leq B}}\left(\frac{2^{\omega_K(\eI_1\eI_2) + \omega_K(\eI_1\cdots\eI_4)}B\log B}{\N\eI_1^2\N\eI_2^{3/2}\N\eI_3^{3/2}\N\eI_4^{3/2}\N\eI_5} + \frac{2^{\omega_K(\eI_1\eI_2) + \omega_K(\eI_1\cdots\eI_4)}B(\log B)^2}{\N\eI_1^3\N\eI_2^2\N\eI_3^2\N\eI_4^2\N\eI_5}\right)\\
    &\ll B(\log B)^2 + B(\log B)^3 \ll B(\log B)^3\text.\qedhere
  \end{align*}
\end{proof}

\begin{lemma}\label{lem:D5_second_summation_ideals}
  If $\eII''$ runs over all six-tuples $(\eI_1, \ldots, \eI_6)$ of
  nonzero ideals of $\OO_K$ then we have
  \begin{equation*}
    N_{U_4,H}(B) = \left(\frac{2}{\sqrt{|\Delta_K|}}\right)^2\sum_{\eII''}\mathcal{A}(\theta_8'(\eII'),\eI_7)V_7(\N\eI_1, \ldots, \N\eI_6; B) + O(B(\log B)^4)\text. 
  \end{equation*}
\end{lemma}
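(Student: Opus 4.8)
The plan is to obtain this identity by feeding the expansion of Lemma~\ref{lem:D5_second_summation} into Lemma~\ref{lem:D5_passage_to_torsor} and then repackaging the resulting double sum over $\classtuple$ and $\ee''$ as a single sum over ideal tuples, exactly as in \cite[Lemma~9.4]{arXiv:1302.6151}.

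First I would combine the two lemmas to write
\begin{align*}
  N_{U_4,H}(B) &= \frac{1}{\omega_K^6}\sum_{\classtuple\in\classrep^6}\left(\frac{2}{\sqrt{|\Delta_K|}}\right)^2\sum_{\ee''\in\OO_{1*}\times\cdots\times\OO_{6*}}\mathcal{A}(\theta_8'(\eII'),\eI_7)\,V_7(\N\eI_1,\ldots,\N\eI_6;B)\\
  &\quad + O(B(\log B)^4),
\end{align*}
where summing the error $O_\classtuple(B(\log B)^3)$ of Lemma~\ref{lem:D5_second_summation} over the finite set $\classrep^6$ contributes $O(B(\log B)^3)\subseteq O(B(\log B)^4)$.

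The crucial observation is that the summand $\mathcal{A}(\theta_8'(\eII'),\eI_7)\,V_7(\N\eI_1,\ldots,\N\eI_6;B)$ depends on $\ee''$ and $\classtuple$ only through the nonzero integral ideals $\eI_j=\e_j\OO_j^{-1}$, $j=1,\ldots,6$: the factor $u_\classtuple$ was already absorbed into $\mathcal{R}(B)$ in Lemma~\ref{lem:D5_second_summation}, and $\theta_8'$, $\tilde\theta_8$, $\theta_0$ together with the averaging operator $\mathcal{A}(\cdot,\eI_7)$ are defined purely in terms of these ideals. For fixed $\classtuple$ and each $j\in\{1,\ldots,6\}$, as $\e_j$ runs over $\OO_j^{\neq 0}$ the ideal $\eI_j$ runs over every nonzero integral ideal in the class $[\OO_j^{-1}]$, each such ideal being hit exactly $\omega_K$ times (once per unit multiple). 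Hence the inner double sum over $\e_1,\ldots,\e_6$ equals $\omega_K^6$ times the sum of the same summand over all six-tuples $\eII''=(\eI_1,\ldots,\eI_6)$ of nonzero integral ideals satisfying $[\eI_j]=[\OO_j^{-1}]$ for $j=1,\ldots,6$.

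Finally I would note that, as $\classtuple=(C_0,\ldots,C_5)$ ranges over $\classrep^6$, the tuple of classes $([\OO_1^{-1}],\ldots,[\OO_6^{-1}])$ ranges over $\mathrm{Cl}(K)^6$, each element attained exactly once; this is immediate from the explicit formulas for $\OO_1,\ldots,\OO_6$, since the classes $[C_0],\ldots,[C_5]$ can be recovered uniquely from $[\OO_1],\ldots,[\OO_6]$. Combining this bijection with the previous step, the double sum over $\classtuple$ and over ideals in the prescribed classes collapses to a single sum over all six-tuples $\eII''$ of nonzero integral ideals of $\OO_K$, and the prefactor $\omega_K^6$ cancels the $1/\omega_K^6$, which gives the claimed formula. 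The main obstacle is nothing deep: it is the bookkeeping that confirms the main term depends on $\ee''$ and $\classtuple$ only via $\eII''$ and that the class-tuple map is a bijection; once these are in hand the argument is a transcription of \cite[Lemma~9.4]{arXiv:1302.6151}.
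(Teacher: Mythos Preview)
Your proposal is correct and is precisely the unpacking of the argument the paper alludes to by citing \cite[Lemma~9.4]{arXiv:1302.6151}: combine Lemma~\ref{lem:D5_passage_to_torsor} with Lemma~\ref{lem:D5_second_summation}, observe that the summand depends only on the ideals $\eI_1,\dots,\eI_6$, and use the bijection between $\classrep^6$ and the tuples of ideal classes induced by the $\OO_j$ to collapse the double sum. Your bookkeeping (the $\omega_K^6$ cancellation, the invertibility of $\classtuple\mapsto([\OO_1],\dots,[\OO_6])$, and the absorption of $u_\classtuple$) is exactly what is needed.
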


\begin{proof}
  This is analogous to \cite[Lemma 9.4]{arXiv:1302.6151}.
\end{proof}

\subsubsection{The remaining summations}
\begin{lemma}\label{lem:D5_completion}
  We have
  \begin{equation*}
    N_{U_4,H}(B) = \left(\frac{2}{\sqrt{|\Delta_K|}}\right)^8 \left(\frac{h_K}{\omega_K}\right)^6 \theta_0V_0(B) + O(B(\log B)^4\log \log B),
  \end{equation*}
  where $\theta_0$ is as in \eqref{eq:def_theta_0} and
  \begin{equation*}
    V_0(B) := \int\limits_{\substack{(\e_1, \ldots, \e_8)\in\mathcal{R}(B)\\\abs{\e_1}, \ldots, \abs{\e_6} \ge 1}}\frac{1}{\abs{\e_2 \e_6^2}}\dd \e_1 \cdots \dd \e_8,
  \end{equation*}
 with complex variables $\e_1, \ldots, \e_8$.
\end{lemma}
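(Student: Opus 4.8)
The plan is to follow the completion arguments for the other surfaces --- most closely Lemma~\ref{lem:D4_completion}, with the treatment of the arithmetic factor as in the $S_2$ case (Lemma~\ref{lem:A4_completion_Ma}) --- starting from the ideal-theoretic formula of Lemma~\ref{lem:D5_second_summation_ideals} and carrying out the six summations over $\eI_1, \ldots, \eI_6$ by a single application of \cite[Proposition~7.3]{arXiv:1302.6151} with $r = 5$, followed by a passage to polar coordinates as in \cite[Lemma~9.5, Lemma~9.9]{arXiv:1302.6151}.

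The first step is to record the pointwise upper bounds for $V_7(t_1, \ldots, t_6; B)$ needed to feed into \cite[Proposition~7.3]{arXiv:1302.6151}. Applying \cite[Lemma~3.5]{arXiv:1302.6151} to the height condition \eqref{eq:D5_height_5} to control the integral over the real variable $t_7$ and the complex variable $\e_8$, and complementing this by using the remaining conditions \eqref{eq:D5_height_1}--\eqref{eq:D5_height_4} to bound $t_7$ and $\abs{\e_8}$ directly, one obtains --- as in the analogous step for $S_2$ --- a pair of bounds of the form $V_7 \ll \frac{B}{t_1 \cdots t_6}$ times a negative, respectively positive, power of $\frac{B}{t_1^6 t_2^5 t_3^3 t_4^4 t_5^2 t_6^4}$, the exponents in the denominator matching those of the leading height condition \eqref{eq:D5_height_1}. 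These two estimates encode the decay in the anticanonical direction, so that \cite[Proposition~7.3]{arXiv:1302.6151} replaces each of the six sums over $\eI_1, \ldots, \eI_6$ by an integral over a positive real variable, with main term of order $B(\log B)^5$ and total error $O(B(\log B)^4 \log \log B)$.

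It remains to identify the constant. The application of \cite[Proposition~7.3]{arXiv:1302.6151} produces, besides the integral of $V_7$ over $t_1, \ldots, t_6 \geq 1$, an arithmetic factor obtained by averaging $\mathcal{A}(\theta_8'(\eII'), \eI_7)$ over all six-tuples of nonzero ideals, together with the accumulated analytic constants. Writing $\theta_8'$ as an Euler product through its definition in terms of $\tilde\theta_8$ and the sum over $\kc$ with $\kc \mid \eI_1\eI_2$, $\kc + \eI_3\eI_4 = \OO_K$ and weights $\mu_K(\kc)/\N\kc$, and carrying out the average locally at each $\p$, every local factor collapses to $\left(1-\frac{1}{\N\p}\right)^6\left(1+\frac{6}{\N\p}+\frac{1}{\N\p^2}\right)$, so the total arithmetic constant is $\theta_0$ as in \eqref{eq:def_theta_0}; this is the same computation as for $S_2$ and, ultimately, as in \cite{arXiv:1302.6151}. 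Passing from the real integrals over $t_1, \ldots, t_6$ to complex integrals over $\e_1, \ldots, \e_6$ by polar coordinates absorbs the powers of $\pi$, introduces the factor $(h_K/\omega_K)^6$, and combines the factor $\frac{2}{\sqrt{|\Delta_K|}}$ from each of the six summations with those already accumulated in Lemma~\ref{lem:D5_first_summation} and Lemma~\ref{lem:D5_second_summation} into the total $\left(\frac{2}{\sqrt{|\Delta_K|}}\right)^8$; the normalisation $1/\abs{\e_2\e_6^2}$ of $V_0(B)$ is inherited from the factor $1/(t_2 t_6^2)$ in $V_7$. This gives the asserted formula.

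The step I expect to be the main obstacle is the bookkeeping around the arithmetic constant: verifying that the local average of $\theta_8'$ really equals the stated Euler factor of $\theta_0$, which requires tracking how the admissible index sets $J$ in the definition of $\theta_{0,\p}$ interact with the $\kc$-sum and with the count of square roots $\rho$ modulo $\kc\eI_2\eI_6^2$, exactly as in the $S_2$ case. The remaining ingredients --- the pointwise bounds for $V_7$ and the verification that all error terms are $O(B(\log B)^4 \log \log B)$ --- are routine once those bounds are in hand.
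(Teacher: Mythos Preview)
Your proposal is correct and follows the same approach as the paper: bound $V_7$ via \cite[Lemma~3.5]{arXiv:1302.6151} applied to \eqref{eq:D5_height_5}, then apply \cite[Proposition~7.3]{arXiv:1302.6151} with $r=5$ and pass to polar coordinates. The paper's own proof is terser --- it records only the negative-power bound $V_7 \ll \frac{B}{t_1\cdots t_6}\bigl(B/(t_1^6 t_2^5 t_3^3 t_4^4 t_5^2 t_6^4)\bigr)^{-1/6}$ from \cite[Lemma~3.5,(5)]{arXiv:1302.6151} and leaves the rest implicit --- whereas you spell out the companion positive-power bound and the identification of the arithmetic constant, but the underlying argument is identical.
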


\begin{proof}
  By \cite[Lemma 3.5, (5)]{arXiv:1302.6151} applied to \eqref{eq:D5_height_5}, we have
  \begin{equation*}
    V_7(t_1, \ldots, t_6; B) \ll \frac{B^{5/6}}{t_2^{1/6}t_3^{1/2}t_4^{1/3}t_5^{2/3}t_6^{1/3}}
    =\frac{B}{t_1 \cdots
      t_6}\left(\frac{B}{t_1^6t_2^5t_3^3t_4^4t_5^2t_6^4}\right)^{-1/6}\text.
  \end{equation*}
  We apply \cite[Proposition 7.3]{arXiv:1302.6151} with $r=5$ and use polar coordinates.
\end{proof}

\subsection{Proof of Theorem \ref{thm:main} for $S_4$}
\begin{lemma}\label{lem:D5_predicted_volume}
  Let $\alpha(\tS_4)$, $\omega_\infty(\tS_4)$ be as in Theorem \ref{thm:main},
  $\mathcal{R}(B)$ as in \eqref{eq:D5_height_1}--\eqref{eq:D5_height_5}, and define
  \begin{equation*}
    V_0'(B) := \int_{\substack{(\e_1, \ldots, \e_8) \in
        \mathcal{R}(B)\\\abs{\e_1}\text{, }\abs{\e_2}\text{, }\abs{\e_4}\text{,
        }\abs{\e_5}\text{, }\abs{\e_6}\geq
        1\\\abs{\e_1^6\e_2^5\e_4^4\e_5^2\e_6^4} \le
        B}}\frac{1}{\abs{\e_2\e_6^2}}\dd \e_1 \cdots \dd \e_8,
  \end{equation*}
  where $\e_1, \ldots, \e_8$ are complex variables. Then
  \begin{equation}\label{eq:D5_predicted_volume}
    \pi^6\alpha(\tS_4) \omega_\infty(\tS_4) B(\log B)^5 = 4 V_0'(B).
  \end{equation}
\end{lemma}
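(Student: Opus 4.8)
The plan is to mimic the proofs of Lemma~\ref{lem:A3+A1_predicted_volume}, Lemma~\ref{lem:A4_predicted_volume} and Lemma~\ref{lem:D4_predicted_volume}, which all follow the same two-step recipe: first rewrite $\omega_\infty(\tS_4)$ as an integral over the ``inner'' torsor variables $\e_3,\e_7,\e_8$ by a linear change of coordinates derived from the parameterization $\Psi$, and then express $\alpha(\tS_4)$ via Derenthal--Frei \cite[Lemma~8.1]{arXiv:1302.6151} as an integral over the ``outer'' variables $\e_1,\e_2,\e_4,\e_5,\e_6$; multiplying the two and integrating gives $V_0'(B)$. So I would proceed as follows.

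First I would fix $\e_1,\e_2,\e_4,\e_5,\e_6\in\CC\smallsetminus\{0\}$ and $B>0$, introduce the scaling factor $l:=(B\abs{\e_1^2\e_2\e_4^3\e_5\e_6^4})^{1/2}$ (chosen so that the five height conditions \eqref{eq:D5_height_1}--\eqref{eq:D5_height_5}, after substitution, become exactly the defining inequalities of the integration region in $\omega_\infty(\tS_4)$), and apply to $\omega_\infty(\tS_4)=\frac{12}{\pi}\int_{\abs{z_0^3},\,\abs{z_0z_1^2},\,\abs{z_0^2z_1},\,\abs{z_0^2z_2},\,\abs{z_0z_2^2+z_1^3}\le 1}\dd z_0\dd z_1\dd z_2$ the linear coordinate transformation
\begin{equation*}
  z_0 = l^{-1/3}\,\e_1^3\e_2^2\e_3^2\e_4^2\e_5\e_6^2\cdot\e_3,\qquad
  z_1 = l^{-1/3}\,\e_1\e_2\e_4\e_5\e_7\cdot\e_7,\qquad
  z_2 = l^{-1/3}\,\e_1^2\e_2\e_4^2\e_5\e_8\cdot\e_8,
\end{equation*}
reading off the correct monomials from the components of $\Psi$; the precise monomials will be whatever makes $z_0^3\leftrightarrow\eqref{eq:D5_height_1}$, $z_0z_1^2\leftrightarrow\eqref{eq:D5_height_2}$, $z_0^2z_1\leftrightarrow\eqref{eq:D5_height_3}$, $z_0^2z_2\leftrightarrow\eqref{eq:D5_height_4}$, and $z_0z_2^2+z_1^3\leftrightarrow\eqref{eq:D5_height_5}$ consistent (using the torsor equation \eqref{eq:D5_torsor} for the last one), so I would back out the exact exponents by matching these five constraints. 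Computing the Jacobian determinant of $(z_0,z_1,z_2)\mapsto(\e_3,\e_7,\e_8)$—which will come out to $\frac{\abs{\e_1\e_2\e_4\e_5\e_6}}{B}\cdot\frac{1}{\abs{\e_2\e_6^2}}$ up to the constant already absorbed—yields
\begin{equation*}
  \omega_\infty(\tS_4) = \frac{12}{\pi}\,\frac{\abs{\e_1\e_2\e_4\e_5\e_6}}{B}\int_{(\e_1,\ldots,\e_8)\in\mathcal{R}(B)}\frac{1}{\abs{\e_2\e_6^2}}\dd\e_3\dd\e_7\dd\e_8,
\end{equation*}
exactly parallel to \eqref{eq:D4_complex_density_torsor}.

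Next I would invoke \cite[Lemma~8.1]{arXiv:1302.6151}: the negative curves $[E_1],\dots,[E_6]$ generate the effective cone of $\tS_4$, and from the Picard-group relations one reads off $[-K_{\tS_4}]=[6E_1+5E_2+3E_3+4E_4+2E_5+4E_6]$ (the exponents appearing in \eqref{eq:D5_height_1}) and an expression for $[E_7]$ in terms of $[E_1],\dots,[E_6]$ (this is what produces the auxiliary inequality $\abs{\e_1^6\e_2^5\e_4^4\e_5^2\e_6^4}\le B$ in the domain of $V_0'(B)$—note $\e_3$ does not appear, matching the statement). That lemma then gives
\begin{equation*}
  \alpha(\tS_4)(\log B)^5 = \frac{1}{3\pi^5}\int_{\substack{\abs{\e_1},\abs{\e_2},\abs{\e_4},\abs{\e_5},\abs{\e_6}\ge 1\\\abs{\e_1^6\e_2^5\e_4^4\e_5^2\e_6^4}\le B}}\frac{\dd\e_1\dd\e_2\dd\e_4\dd\e_5\dd\e_6}{\abs{\e_1\e_2\e_4\e_5\e_6}}.
\end{equation*}
Finally I would substitute both displays into the right-hand side $4V_0'(B)$: the factor $\frac{\abs{\e_1\e_2\e_4\e_5\e_6}}{B}$ from $\omega_\infty$ cancels against the $\frac{1}{\abs{\e_1\e_2\e_4\e_5\e_6}}$ in the $\alpha$-integral and against a factor $B$, and the constants combine as $\frac{12}{\pi}\cdot\frac{1}{3\pi^5}\cdot\pi^6=4$, yielding $\pi^6\alpha(\tS_4)\omega_\infty(\tS_4)B(\log B)^5=4V_0'(B)$.

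The main obstacle I anticipate is purely bookkeeping: getting the exponents in the linear substitution for $z_0,z_1,z_2$ exactly right so that all five height conditions \eqref{eq:D5_height_1}--\eqref{eq:D5_height_5} transform simultaneously into the five inequalities defining the region of $\omega_\infty(\tS_4)$, and checking that the resulting Jacobian is precisely $\frac{\abs{\e_1\e_2\e_4\e_5\e_6}}{B}\cdot\frac{1}{\abs{\e_2\e_6^2}}$ (recall $\abs{\cdot}=|\cdot|^2$, so a monomial substitution of ``holomorphic degree'' $d$ contributes $|{\cdot}|^{2d}$ to the real Jacobian). The second potential subtlety is confirming the divisor-class identities for $[-K_{\tS_4}]$ and $[E_7]$ against the configuration in Figure~\ref{fig:D5_dynkin}; once those are in hand the argument is formally identical to Lemma~\ref{lem:D4_predicted_volume}, so I would simply write ``the proof is analogous to the proof of Lemma~\ref{lem:D4_predicted_volume}'' and record only the substitution, the Jacobian, and the two resulting integral identities.
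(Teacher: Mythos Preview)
Your approach is exactly the paper's, and the final two displayed identities together with the constant check $\tfrac{12}{\pi}\cdot\tfrac{1}{3\pi^5}\cdot\pi^6=4$ are precisely what the proof records. The only discrepancies are the bookkeeping placeholders you already flag as tentative: the scaling factor you wrote is the $\Dfour$ one, whereas here one needs $l=(B\abs{\e_1^3\e_2^4\e_4^2\e_5\e_6^5})^{1/2}$, and the correct linear substitution (with $\e_3,\e_7,\e_8$ as the integration variables, no extra factors of them in the coefficients) is
\[
z_0=l^{-1/3}\e_1^3\e_2^3\e_4^2\e_5\e_6^3\cdot\e_3,\qquad
z_1=l^{-1/3}\e_1\e_2\e_4\e_5\e_6\cdot\e_7,\qquad
z_2=l^{-1/3}\cdot\e_8.
\]
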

\begin{proof}
  Let $\e_1,\e_2,\e_4,\e_5,\e_6 \in \CC$, $B>0$, and $l := (B
  \abs{\e_1^3\e_2^4\e_4^2\e_5\e_6^5})^{1/2}$. Let $\e_3, \e_7, \e_8$ be complex variables. We apply the coordinate
  transformation $z_0 = l^{-1/3}\e_1^3\e_2^3\e_4^2\e_5\e_6^3\cdot \e_3$,
  $z_2 = l^{-1/3}\e_1\e_2\e_4\e_5\e_6\cdot \e_7$, $z_3 =
  l^{-1/3}\cdot \e_8$ to $\omega_\infty(\tS_4)$ and obtain
  \begin{equation}\label{eq:D5_complex_density_torsor}
    \omega_\infty(\tS_4) = \frac{12}{\pi}\frac{\abs{\e_1\e_2\e_4\e_5\e_6}}{B}
    \int_{(\e_1, \ldots, \e_8)\in\mathcal{R}(B)}\frac{1}{\abs{\e_2\e_6^2}}\dd \e_3 \dd \e_7 \dd \e_8\text.
  \end{equation}

  Since the negative curves $[E_1], \dots, [E_6]$ generate the effective
  cone of $\tS_4$, and $[-K_{\tS_4}] = [6E_1+5E_2+3E_3+4E_4+2E_5+4E_6]$,
  \cite[Lemma 8.1]{arXiv:1302.6151} gives
  \begin{equation}\label{eq:D5_alpha}
      \alpha(\tS_4)(\log B)^5= \frac 1{3\pi^5} \int_{\substack{\abs{\e_1}\text{, }\abs{\e_2}\text{, }\abs{\e_4}\text{,
        }\abs{\e_5}\text{, }\abs{\e_6}\geq
        1\\\abs{\e_1^6\e_2^5\e_4^4\e_5^2\e_6^4} \le
        B}} \frac{\dd \e_1\dd \e_2\dd \e_4\dd \e_5\dd
        \e_6}{\abs{\e_1\e_2\e_4\e_5\e_6}}\text.
  \end{equation}
  The lemma follows by substituting
  \eqref{eq:D5_complex_density_torsor} and \eqref{eq:D5_alpha} in
  \eqref{eq:D5_predicted_volume}.
\end{proof}

To finish our proof, we compare $V_0(B)$ defined in Lemma
\ref{lem:D5_completion} with $V_0'(B)$ defined in Lemma
\ref{lem:D5_predicted_volume}. Starting from $V_0(B)$,
we can add the condition $\abs{\e_1^6\e_2^5\e_4^4\e_5^2\e_6^4} \le B$ and remove $\abs{\e_3} \ge
1$ with negligible error. Indeed, adding the condition $\abs{\e_1^6\e_2^5\e_4^4\e_5^2\e_6^4} \le B$ to the domain of
integration for $V_0(B)$ does not change the result. Using \cite[Lemma 3.5,
(3)]{arXiv:1302.6151} applied to \eqref{eq:D5_height_5} to bound the integral
over $\e_7, \e_8$, we see that $V_0'(B)-V_0(B)$ is 
\begin{equation*}
  \ll \int_{\substack{\abs{\e_1},\abs{\e_2}, \abs{\e_4}, \abs{\e_5}, \abs{\e_6} \ge 1\\\abs{\e_3} < 1\\\abs{\e_1^6\e_2^5\e_4^4\e_5^2\e_6^4} \le B}} \frac{B^{5/6}}{\abs{\e_2\e_3^3\e_4^2\e_5^4\e_6^2}^{1/6}}\dd \e_1\cdots\dd \e_6 \ll B(\log B)^4.
\end{equation*}
Using Lemma~\ref{lem:D5_completion} and Lemma~\ref{lem:D5_predicted_volume},
this implies Theorem \ref{thm:main} for $S_4$.

\bibliographystyle{alpha}

\bibliography{counting_imaginary_quadratic_points_2}

\end{document}